\documentclass[11pt]{article}
\usepackage{amsmath,amsfonts,amssymb,amsthm,epsfig,epstopdf,url,array} 
\usepackage{amsmath}
\usepackage[english]{babel}
\usepackage{amsfonts}
\usepackage{hyperref}
\usepackage{color}

\numberwithin{equation}{section}

\parindent5mm
\textwidth174mm
\textheight224mm
\voffset-10mm
\oddsidemargin-5mm
\evensidemargin-5mm

\newtheorem{theo}{Theorem}[section]
\newtheorem{lem}[theo]{Lemma}
\newtheorem{cor}[theo]{Corollary}
\newtheorem{rem}{Remark}[section]
\newtheorem{prop}[theo]{Proposition}

\newcommand{\mysection}[1]{\section{#1} \setcounter{equation}{0}}

\newcommand{\be}{\begin{equation} \label}
\newcommand{\ee}{\end{equation}}
\newcommand{\bea}{\begin{eqnarray}\label}
\newcommand{\eea}{\end{eqnarray}}
\newcommand{\bas}{\begin{eqnarray*}}
\newcommand{\eas}{\end{eqnarray*}}
\newcommand{\bit}{\begin{itemize}}
\newcommand{\eit}{\end{itemize}}

\newcommand{\R}{\mathbb{R}}
\newcommand{\N}{\mathbb{N}}

\newcommand{\eps}{\varepsilon}

\newcommand{\ts}{\textstyle}
\newcommand{\ds}{\displaystyle}

\usepackage{titlesec}

\usepackage{lipsum}

\makeatletter

\usepackage[twoside]{fancyhdr}
  \fancyhf{}
  \fancyhead[CO]{L.~CHABI AND Ph.~SOUPLET} % Left header
  \fancyhead[CE]{BLOW-UP BEHAVIOR FOR KELLER-SEGEL SYSTEM} % Left header
  \fancyhead[R]{\thepage}% Right header

\pagestyle{fancy}

\newcommand*{\runinsubsection}{%
  \@startsection{subsection}%
  {2}% level
  {\z@}% indentation of heading from the left margin
  {-3.25ex\@plus -1ex \@minus -.2ex}% absolute value = beforeskip
  {-1.5em \@plus -.1em}% when negative, opposite = skip to leave right of a
                       % run-in heading.
  {\normalfont\large\bfseries}% style
}

\makeatother

\begin{document}
\title
{Sharp macroscopic blow-up
behavior for the parabolic-elliptic Keller-Segel system in dimensions $n\ge 3$}
\author{Loth Damagui CHABI and Philippe SOUPLET\footnote{chabi@math.univ-paris13.fr,  souplet@math.univ-paris13.fr}\\
{\small Universit\'{e} Sorbonne Paris Nord, CNRS UMR 7539}\\
{\small Laboratoire Analyse, G\'{e}om\'{e}trie et Applications, 93430, Villetaneuse, France} 
}
\date{}
\maketitle

\begin{abstract} \noindent
We study the space-time concentration or 
blow-up asymptotics of radially decreasing solutions 
of the parabolic-elliptic Keller-Segel system in the whole space or in a ball.
We show that, for any solution in dimensions $3\le n\le 9$ (assuming finite mass in the whole space case),
there exists a nonflat backward self-similar solution $U$ such that
$$u(x,t)=(1+o(1))U(x,t),\quad\hbox{as $(x,t)\to (0,T)$.}$$
This macroscopic behavior is important from the physical point of view, 
since it gives a sharp description of the concentration phenomenon in the scale of the original space-time variables~$(x,t)$. 
It strongly improves on existing results, since such behavior was previously known (\cite{GMS}) to hold only  
in the microscopic scale $|x|\le O(\sqrt{T-t})$ 
as $t\to T$ (and in the whole space case only). 
As a consequence, we obtain the two-sided global estimate
$$C_1\le (T-t+|x|^2)u(x,t)\le C_2\quad\hbox{in $B_R\times(T/2,T)$},$$
whose upper part only was known before (\cite{Soup-Win}),
as well as the sharp final profile:
$$\lim_{x\to 0} |x|^2u(x,T)=L\in(0,\infty).$$
The latter improves, with a different proof, the recent result of \cite{BZ} by excluding the possibility $L=0$.
We also give extensions of these results, in higher dimensions, to type~I and to time monotone solutions.
Moreover, we extend the known results on type I estimates
and on convergence in similarity variables,
and significantly simplify their proofs. 

\smallskip
\noindent {\bf Key words:} parabolic-elliptic Keller-Segel system, chemotaxis, blow-up profile, space-time estimates

\smallskip
\noindent  {\bf AMS Classification:}  92C17, 35B40, 35B44, 35K40.
\end{abstract}

\mysection{Introduction}

Let $\Omega=B_R\subset\R^n$ or $\Omega=\R^n$, with $R>0$ and $n\ge 2$.
In this article we consider radially symmetric solutions of the classical parabolic-elliptic Keller-Segel-Patlak system
\be{0}
        \left\{ \begin{array}{lcll}
    	\hfill u_t &=& \Delta u - \nabla \cdot (u\nabla v), 
    	& x\in\Omega, \ t>0, \\[1mm]
    	\hfill 0 &=& \Delta v + u,
    	& x\in\Omega, \ t>0, \\[1mm]
	\hfill  \frac{\partial u}{\partial\nu}-u \frac{\partial v}{\partial\nu}\ = \ v &=&0,
    	& x\in\partial\Omega, \ t>0, \\[1mm]
    	\hfill u(x,0)&=&u_0(x),
    	& x\in\Omega,
        \end{array} \right.
\ee
where the boundary conditions are omitted in case $\Omega=\R^n$.
As for the initial data, we will assume that
\be{i0}
u_0\in L^\infty(\Omega),\ u_0\ge 0, 
\  \mbox{$u_0$ is radially symmetric and nonincreasing with respect to $|x|$}. 
\ee

This system is a very well-known model of chemotaxis, 
where $u$ and $v$ respectively stand for the density of the bacterial population 
and of the secreted chemoattractant.
System \eqref{0} is also involved in a model of gravitational interaction of particles \cite{BN94, BHN, biler1995}.
It constitutes a very active research topic, 
which has received considerable attention from the mathematical point of view
(see e.g.~the surveys \cite{Horst1,Horst2} and the books \cite{SuzBook,BilerBook} for references).

Problem \eqref{0} is locally well posed (see Section~\ref{secprelim}) 
and we denote by $(u,v)$ its unique, maximal classical solution,
and by $T$ its maximal existence time. 
Moreover, $u$ is nonnegative and radial decreasing under assumption \eqref{i0}.
It is known, see~\cite{jaeger_luckhaus,BHN,biler1995,nagai1995,CPZ,Sen05,BDP}, that if $n\ge 2$ and $u_0$ is suitably large, then $T<\infty$ and the solution blows up in the following sense:
$$	\lim_{t\to T} \|u(\cdot,t)\|_{L^\infty(\Omega)} = \infty.$$ 
Recalling the mass conservation property $\|u(t)\|_1=\|u_0\|_1$,
and keeping in mind the biological background of system \eqref{0},
this can be  interpreted as a phenomenon of concentration or aggregation of the bacterial population.
Understanding the asymptotic behavior of blow-up solutions is thus meaningful 
for the interpretation of the model, especially since
remarkable differences between the cases $n=2$ and $n\ge 3$ have been discovered in previous work 
regarding the form of blow-up singularities 
(see Remark~\ref{remTypeI}  below for more details).

In this paper we are concerned with the case $n\ge 3$ and
one of our main aims is to give a sharp description, {\it in the scale of the original variables~$(x,t)$}, of the space-time and final space 
concentration behavior of solutions of problem \eqref{0} under assumption \eqref{i0}.
Especially, in the range $3\le n\le 9$ (and under a finite mass assumption if $\Omega=\R^n$),
our description will be valid for any radially decreasing blow-up solution.
Our results are in particular motivated by the works \cite{GP, GMS, Soup-Win, BZ} and will sharpen and/or extend the descriptions given there.

\medskip

For a given solution $(u,v)$ of  \eqref{0}, its blow-up set $B(u_0)$ is defined by 
$$
B(u_0):=\bigl\{x_0\in \overline\Omega;\, |u(x_j,t_j)|+|v(x_j,t_j)| \to \infty 
\ \hbox{for some sequence $(x_j,t_j)\to (x_0,T)$}\bigr\}.
$$
Under assumption \eqref{i0}, we have $B(u_0)=\{0\}$ for $\Omega=B_R$.
When $\Omega=\R^n$, this remains true whenever $u_0\in L^1(\R^n)$ (see Remark~\ref{remopen}(i)). 
In what follows, we denote by 
$$u(x,T):=\lim_{t\to T}u(x,t)$$
 the final blow-up profile of $u$. 
If $B(u_0)=\{0\}$ then, by parabolic estimates,
$u(x,T)$ exists and is finite for all $x\in\Omega\setminus\{0\}$.

In view of the statement of our main results, we recall that
for $n\ge 3$, problem \eqref{0} with $\Omega=\R^n$ is known to possess positive, radially symmetric, backward self-similar solutions.
We refer to \cite{HMV, BCKSV, GP, Sen05, GMS, Naito-Senba-2012, NWZ}, where their existence and various properties are obtained.
The $u$-component of such solutions is of the form
$$u(x,t)=\frac{1}{T-t}U\Big(\frac{|x|}{\sqrt{T-t}}\Big),$$
where the similarity profile $U=U(y)>0$ is given by $U=n\Psi+y\Psi_y$, with $\Psi(y)$ any global positive classical solution of the initial value problem
\be{eqnPsi}
\Psi_{yy}+\Big(\frac{n+1}{y}-\frac{y}{2}\Big)\Psi_y-\Psi+\Psi(y \Psi_y+n\Psi)=0,\quad y\in(0,\infty),
\qquad \Psi^\prime(0)=0.
\ee
Denote 
$$\mathcal{S}=\Big\{\hbox{$U=y\Psi_y+n\Psi$, such that $\Psi=\Psi(y)\in C^2([0,\infty))$ is a global positive solution of \eqref{eqnPsi}.}\Big\}$$
Beside the constant profile $U\equiv 1$ (i.e.~$\Psi\equiv 1/n$), corresponding to the flat ODE solution $u(x,t)=(T-t)^{-1}$, the set~$\mathcal{S}$ contains the explicit solution \cite{BCKSV}:
$$U_0(y)= \frac{4(n-2)(2n+|y|^2)}{(2(n-2)+|y|^2)^2}.$$
It is known that $\mathcal{S}$ contains at least a countable family.
Moreover, any nonconstant $U\in\mathcal{S}$ has quadratic decay at infinity, namely
$$\lim_{y\to\infty}y^2 U(y)=C\quad\hbox{for some $C\in(0,\infty)$}.$$
As a consequence, the corresponding self-similar solution
has blowup set $\{0\}$ and final profile $C|x|^{-2}$.
The latter is thus a multiple of the special singular solution $u_c=2(n-2)|x|^{-2}$,
known as the Chandrasekhar solution, which, as an initial data, plays a critical role in the local existence theory (see \cite{BilerBook} and the references therein).

\mysection{Main results}

\subsection{Sharp global space-time blow-up behavior in dimensions $3\le n\le 9$}

Our first main result is a sharp description, {\it in the scale of the original variables~$(x,t)$}, of the space-time concentration behavior
for general radially decreasing blow-up solutions in dimensions $3\le n\le 9$.

\begin{theo}\label{theo13}
Let $3\le n\le 9$ and consider problem \eqref{0} with $\Omega=\R^n$ or $\Omega=B_R$.
Assume that $u_0$ satisfies \eqref{i0},
along with $u_0\in L^1(\R^n)$ in case $\Omega=\R^n$,
and that $T=T(u_0)<\infty$.

\begin{itemize}
\parskip=1pt
 \itemsep=1pt
 
\item[(i)] (Macroscopic self-similar behavior)
There exists $U\in \mathcal{S}\setminus\{1\}$, such that
  \be{13.1}
	 u(x,t) = \frac{1+\eps(x,t)}{T-t}U\Big(\frac{|x|}{\sqrt{T-t}}\Big)
	\qquad \hbox{with } \lim_{(x,t)\to (0,T)} \eps(x,t)=0.
  \ee

\item[(ii)] (Two-sided global space-time estimate)
 There exist $C_1,C_2>0$, such that
  \be{globalasympt}
\frac{C_1}{T-t+|x|^2}\le u(x,t) \le \frac{C_2}{T-t+|x|^2},
	\qquad (x,t)\in B_\rho\times[T/2,T),
\ee
with $\rho=R$ if $R<\infty$ and $\rho=1$ otherwise.

\item[(iii)] (Sharp final blow-up profile) There exists $L>0$ such that 
  \be{finalblow-up}
 u(x,T)=(1+o(1))L|x|^{-2},
	\qquad \hbox{as } x\to 0.
  \ee
  \end{itemize}
\end{theo}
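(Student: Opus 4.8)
The plan is to prove part~(i)---the macroscopic self‑similar behavior---as the core statement, and then deduce (ii) and (iii) from it. We work in the self-similar variables
\[
w(y,s):=(T-t)\,u(x,t),\qquad y:=\frac{x}{\sqrt{T-t}},\qquad s:=-\log(T-t)\quad(s\ge s_0:=-\log T),
\]
together with the analogously rescaled potential $V$ (so $-\Delta_y V=w$), for which $w$ solves the rescaled Keller--Segel system on $\{|y|<Re^{s/2}\}$ (or on $\R^n$). We use three facts for $3\le n\le9$ and any radial decreasing blow-up solution, proved separately in the paper: (a) the type~I estimate $\sup_{s\ge s_0}\|w(\cdot,s)\|_\infty<\infty$; (b) convergence in similarity variables (cf.\ \cite{GMS}): there is $U\in\mathcal S$ with $w(\cdot,s)\to U$ uniformly on compact subsets of $\R^n$; (c) the upper bound of \cite{Soup-Win}, $u(x,t)\le C_2(T-t+|x|^2)^{-1}$, i.e.\ $w(y,s)\le C_2(1+|y|^2)^{-1}$. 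First one notes that $U\not\equiv1$: choosing $y_0$ with $1+|y_0|^2>C_2$, (c) gives $w(y_0,s)\le C_2(1+|y_0|^2)^{-1}<1$, contradicting $w(y_0,s)\to U(y_0)=1$; hence, by the Introduction, $C:=\lim_{y\to\infty}y^2U(y)\in(0,\infty)$.

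For~(i), set $\eps(x,t):=w(y,s)/U(|y|)-1$; since $U>0$ is continuous it suffices to prove $\eps(x,t)\to0$ as $(x,t)\to(0,T)$. If not, choose $(x_j,t_j)\to(0,T)$ with $|\eps(x_j,t_j)|\ge\eps_0>0$ and, after extraction, either $\rho_j:=|x_j|/\sqrt{T-t_j}$ bounded or $\rho_j\to\infty$. If $\rho_j$ is bounded we may assume $\rho_j\to\ell$ and $x_j/|x_j|\to e$; then $y_j\to\ell e$, $s_j\to\infty$, so (b) yields $w(y_j,s_j)\to U(\ell)$, hence $\eps(x_j,t_j)\to0$---impossible. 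The remaining case $\rho_j\to\infty$ is the genuinely new point, and I expect it to be the main obstacle. Rescale the outer points at their natural scale $\lambda_j:=|x_j|\to0$: setting $U_j(\xi,\sigma):=\lambda_j^2u(\lambda_j\xi,\,T+\lambda_j^2\sigma)$ and $V_j(\xi,\sigma):=v(\lambda_j\xi,\,T+\lambda_j^2\sigma)$, the pair $(U_j,V_j)$ solves the scale-invariant Keller--Segel system on an expanding domain, for $\sigma\in(-T\lambda_j^{-2},0)$. By (c), $U_j(\xi,\sigma)\le C_2(-\sigma+|\xi|^2)^{-1}$, which bounds $U_j$ uniformly on each $\{|\xi|\ge r\}\times(-\infty,0)$ and (using $n\ge3$) keeps $U_j(\cdot,\sigma)$ bounded in $L^1_{loc}$ uniformly; thus the chemical drift $\nabla V_j$ is uniformly bounded away from $\xi=0$, and parabolic estimates give uniform Hölder bounds for $U_j$ on $\{r\le|\xi|\le r^{-1}\}\times[-r^{-1},0)$, \emph{up to $\sigma=0$}. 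Hence $U_j\to U_\infty$ along a subsequence, locally uniformly on $\{|\xi|>0\}\times(-\infty,0]$. To identify $U_\infty$, fix $\sigma<0$ and put $x=\lambda_j\xi$, $t=T+\lambda_j^2\sigma$: then $T-t=\lambda_j^2(-\sigma)$, $x/\sqrt{T-t}=\xi/\sqrt{-\sigma}$ and $-\log(T-t)\to\infty$, so $U_j(\xi,\sigma)=(-\sigma)^{-1}w\bigl(\xi/\sqrt{-\sigma},-\log(T-t)\bigr)$, which by (b) converges---uniformly on each $\{r\le|\xi|\le r^{-1}\}\times[-r^{-1},-\delta_0]$, since there $\xi/\sqrt{-\sigma}$ ranges in a compact set---to $(-\sigma)^{-1}U(|\xi|/\sqrt{-\sigma})$. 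Letting $\delta_0\downarrow0$ gives $U_\infty(\xi,\sigma)=(-\sigma)^{-1}U(|\xi|/\sqrt{-\sigma})$ for all $\sigma<0$: $U_\infty$ is exactly the backward self-similar solution with profile $U$, and by $y^2U(y)\to C$ it extends continuously to $\sigma=0$ with $U_\infty(\xi,0)=C|\xi|^{-2}$. Finally, with $\xi_j:=x_j/|x_j|\to e$ (after extraction) and $\sigma_j:=-(T-t_j)/|x_j|^2=-\rho_j^{-2}\to0^-$, one has $w(y_j,s_j)=\rho_j^{-2}U_j(\xi_j,\sigma_j)$ while $U(\rho_j)=(C+o(1))\rho_j^{-2}$, whence
\[
\eps(x_j,t_j)=\frac{w(y_j,s_j)}{U(\rho_j)}-1=\frac{U_j(\xi_j,\sigma_j)}{C+o(1)}-1\ \longrightarrow\ \frac{U_\infty(e,0)}{C}-1=0
\]
by the uniform convergence of $U_j$ up to $\sigma=0$ near $(e,0)$---contradicting $|\eps(x_j,t_j)|\ge\eps_0$. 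This proves~(i).

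Parts (ii) and (iii) then follow quickly. For~(iii): by (i), $u(x,t)=(1+\eps(x,t))(T-t)^{-1}U(|x|/\sqrt{T-t})$; fixing $x\ne0$ and letting $t\to T$ one has $|x|/\sqrt{T-t}\to\infty$, so $(T-t)^{-1}U(|x|/\sqrt{T-t})\to C|x|^{-2}$, while $\limsup_{t\to T}|\eps(x,t)|\le\eta$ as soon as $|x|$ is below a threshold depending only on $\eta$ (precisely the content of (i)); hence $\lim_{x\to0}|x|^2u(x,T)=C=:L\in(0,\infty)$, which is (iii), and in particular $L>0$---the improvement over \cite{BZ}. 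For~(ii): the upper bound is (c). For the lower bound, on $\{|x|+(T-t)<\delta_1\}$ with $\delta_1$ chosen so that $|\eps|<\tfrac12$ there, (i) gives $u(x,t)\ge\tfrac12(T-t)^{-1}U(|x|/\sqrt{T-t})\ge c\,(T-t+|x|^2)^{-1}$, since $(1+|y|^2)U(|y|)$ is continuous, positive and has positive limit $C$; on the rest of $B_\rho\times[T/2,T)$---at positive distance from the unique singular point $(0,T)$---parabolic regularity and the strong maximum principle give $u\ge c_1>0$, where moreover $T-t+|x|^2$ is bounded above and bounded below away from $0$, so again $u\ge C_1(T-t+|x|^2)^{-1}$. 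Combining the two regions gives (ii).

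The main obstacle, as indicated, is the case $\rho_j\to\infty$ in the proof of (i), i.e.\ the passage from the microscopic scale $|x|\le O(\sqrt{T-t})$ of \cite{GMS} to the macroscopic scale: the rescaling argument relies on the type~I estimate together with the \emph{matching} quadratic decay of the \cite{Soup-Win} bound, which is what keeps the rescaled solutions $U_j$ uniformly controlled away from the origin (including the $L^1_{loc}$ control of the drift needed because $u$ itself is only $L^1$), and on parabolic estimates that remain uniform \emph{up to} the blow-up time $\sigma=0$. Establishing the inputs (a)--(b) for \emph{all} radial decreasing blow-up solutions throughout $3\le n\le9$ is where the dimensional restriction enters.
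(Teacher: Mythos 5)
Your proof of part (i) is correct and takes a genuinely different route from the paper's. The paper, having obtained the microscopic convergence (Proposition~\ref{propA2}), proves the quantitative estimate $|u_t(x,t)|\le C|x|^{-4}$ (Proposition~\ref{lemdecaywt}, via a two-stage $L^q$/Schauder bootstrap) and then \emph{propagates} the closeness $u\approx V$ from the local time $t_0(x)=T-K^{-2}|x|^2$ to $T$ by integrating $u_t$ and $V_t$ in time; this is a direct, quantitative argument inspired by Matano--Merle~\cite{MM09}. You instead argue by contradiction, blow up at the final time at scale $\lambda_j=|x_j|$, and use the Souplet--Winkler upper bound $u\le C_2(T-t+|x|^2)^{-1}$ to control the rescaled solutions $U_j$ and the drift $\nabla V_j$ uniformly away from $\xi=0$, so that parabolic regularity gives compactness \emph{up to} $\sigma=0$; identification of the limit on $\sigma<0$ via Proposition~\ref{propA2}, plus continuity at $\sigma=0$, then yields $U_\infty(\xi,0)=C|\xi|^{-2}$ and the contradiction. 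The technical heart of both routes is the same parabolic bootstrap (your uniform H\"older bounds for $U_j$ up to the terminal time are precisely the rescaled content of Proposition~\ref{lemdecaywt}), so in terms of effort the approaches are comparable; the paper's is constructive and yields quantitative rates, while yours is softer and avoids stating the $u_t$ estimate as a separate proposition, at the cost of giving only a qualitative conclusion. Your deductions of (ii) and (iii) from (i) match the paper's. Two small points worth making explicit in a final write-up: (1) for $\Omega=B_R$ the microscopic convergence input must come from Proposition~\ref{propA2} rather than \cite{GMS} (which covers only $\Omega=\R^n$), and it is Proposition~\ref{propA2}(ii), not \cite{GMS}, that guarantees $\Psi>0$ and $\Psi'<0$, hence $U\in\mathcal S$ is nonconstant with $\lim_{y\to\infty}y^2U(y)\in(0,\infty)$; your $(1+|y_0|^2)>C_2$ argument rules out $U\equiv 1$ but not $U\equiv 0$, which needs $\liminf_{t\to T}(T-t)\|u(t)\|_\infty>0$ or Proposition~\ref{propA2}(ii). (2) The ``parabolic estimates up to $\sigma=0$'' should be spelled out as a bootstrap exactly as in Proposition~\ref{lemdecaywt}: uniform $L^\infty$ bounds for $U_j$ and $\nabla V_j$ on annuli, then interior $L^q$ estimates and Sobolev imbedding for uniform H\"older bounds on $U_j$, then H\"older bounds on $\nabla V_j$, then Schauder.
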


We stress that the macroscopic self-similar behavior \eqref{13.1} is important from the physical point of view,  
as it gives a sharp description of the concentration phenomenon in the scale of the original space-time variables~$(x,t)$.  
It strongly improves on existing results, since such behavior  
was previously known to hold only in the {\it microscopic} scale $|x|\le O(\sqrt{T-t})$ 
as $t\to T$ (cf.~\cite{GMS} 
in the case $\Omega=\R^n$ and \cite{GP} for special classes of radially decreasing solutions in the case $\Omega=B_R$).
Note that this scale does not describe the final profile \eqref{finalblow-up} nor provides global space-time estimates such as \eqref{globalasympt}.
As for the latter, only its upper part was known before (cf.~\cite{Soup-Win}).

Concerning the final profile, under the assumptions of Theorem~\ref{theo13} 
 with $\Omega=\R^n$ and $n\ge 3$,
 it was recently shown in \cite{BZ}, by a quite different argument, that either \eqref{finalblow-up} holds or $\lim_{x\to 0}|x|^2u(x,T)=0$.
Our result rules out the latter possibility in dimensions $3\le n\le 9$, 
thus providing a sharp final profile,
and also covers the case $\Omega=B_R$
(see~Remarks~\ref{remopen}(iii) and \ref{remBZ} for details and further comparison with the result 
and proof in \cite{BZ}).

  \medskip
  
  \begin{rem} {\bf (Stable behavior)}
When $\Omega=\R^3$, it was shown 
 in \cite{GS-2024,Collot-Zhang}, confirming a long standing conjecture from \cite{BCKSV} based on numerical experiments, that the blow-up profile $U_0$ is stable in the following sense:
There exists $\eps>0$ such that, if
$0\le u_0\in L^\infty(\R^3)$ is radially symmetric and $\|u_0-U_0\|_\infty\le\eps$,
then $(u,v)$ blows up at a finite time $T=T(u_0)$ and
\be{conclstabil}
\big\|u(t,x)-(T-t)^{-1}U_0(x/\sqrt{T-t})\big\|_{L^\infty_x(\R^3)}=o((T-t)^{-1}),\quad t\to T.
\ee
By this and Theorem~\ref{theo13}, under the same assumption with $u_0$
radially nonincreasing, 
we obtain the behavior of $u$ in the larger macroscopic scale and its final profile, namely:
\be{conclstabilmacro}
u(x,t)=4(1+\eps(x,t))\frac{x^2+6(T-t)}{(x^2+2(T-t))^2},
	\ \hbox{with}\ \lim_{(0,T)} \eps=0 \quad\hbox{ and } \quad
	u(x,T)\sim 4|x|^{-2},\  x\to 0.
\ee
Very recently \cite{LZ25}, as an important further step, the stability results in \cite{GS-2024,Collot-Zhang} were extended to the nonradial setting.
Also the right hand side of \eqref{conclstabil} was improved to $o((T-t)^{-1+\delta})$ for some (nonexplicit) small $\delta>0$.
We stress again that, since $\lim_{|y|\to\infty} U_0(y)=0$, estimate \eqref{conclstabil} with such right hand sides
does not provide the macroscopic behavior nor the final profile like \eqref{conclstabilmacro}.

\end{rem}

\subsection{Extensions to type I and to monotone in time solutions} 

Recall that any blow-up solution of problem \eqref{0} satisfies $\liminf_{t\to T} \, (T-t)\|u(t)\|_\infty>0$ and that
blow-up is said to be of type I if 
$$\|u(t)\|_\infty\le M(T-t)^{-1},\quad 0<t<T,$$
for some constant $M>0$, and type~II otherwise. This classification is motivated by scale invariance considerations
 and the underlying ODE. Indeed, substituting the equation for $v$ into the equation for $u$ in \eqref{0}, we obtain 
$u_t=\Delta u+u^2-\nabla v\cdot\nabla u$,
whose spatially homogeneous solutions are given by $u(t)=(T-t)^{-1}$. 
See Remark~\ref{remTypeI} below for a review of known results on type I / type II blow-up for system \eqref{0}.

Theorem~\ref{theo13}  is a consequence of the following more general result for type I solutions in any dimension $n\ge 3$.

\goodbreak

\begin{theo}\label{theo13b}
Let $n\ge 3$ and consider problem \eqref{0} with $\Omega=\R^n$ or $\Omega=B_R$, where $u_0$ satisfies \eqref{i0} and $T(u_0)<\infty$.
Assume 
   \be{hyp13.1}
   \hbox{Blow-up is of type~I}
   \ee
   and
    \be{hyp13.1b}
   \hbox{$B(u_0)\ne \R^n$ in case $\Omega=\R^n$.}
   \ee
Then the conclusions of Theorem~\ref{theo13} are true.  
\end{theo}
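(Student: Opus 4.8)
The plan is to work in self-similar (similarity) variables and extract a nonflat backward self-similar limit profile, then transfer the resulting asymptotics back to the original variables~$(x,t)$ and integrate them to obtain the final profile. Concretely, introduce the rescaled functions $w(y,s)=(T-t)u(x,t)$ with $y=x/\sqrt{T-t}$, $s=-\log(T-t)$, so that $w$ solves the rescaled Keller–Segel system on $\Omega_s=(T-t)^{-1/2}\Omega$. Under the type~I hypothesis \eqref{hyp13.1}, $w$ is uniformly bounded, and parabolic regularity gives local-in-$y$ smoothness and compactness; combined with the radial monotonicity from \eqref{i0} and the mass/energy structure of \eqref{0}, one shows that along any sequence $s_j\to\infty$ the functions $w(\cdot,s_j)$ converge (locally uniformly, and in a suitable weighted space) to a stationary solution of the rescaled system, i.e.\ to some $U\in\mathcal S$. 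The role of hypothesis \eqref{hyp13.1b} (resp.\ the finite-mass assumption feeding into it) is precisely to rule out the degenerate scenario where $B(u_0)$ is all of $\R^n$ and no localization is available; once $B(u_0)=\{0\}$, the blow-up is isolated and the rescaled limit is genuinely localized.

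The crucial point — and the main obstacle — is to prove that the limit profile is \emph{unique} (independent of the sequence $s_j$) and \emph{nonflat}, i.e.\ $U\in\mathcal S\setminus\{1\}$, and that the convergence holds in the full limit $s\to\infty$ rather than merely along sequences. I expect this to rest on a Lyapunov-functional argument: the rescaled system possesses an energy functional whose dissipation forces $\omega$-limit sets to consist of stationary profiles, and a combination of (a) the backward-uniqueness / analyticity-type rigidity for the stationary equation \eqref{eqnPsi}, (b) the monotone structure (the solution is radially decreasing, and for type~I / time-monotone solutions one has extra monotonicity in~$t$), and (c) an energy comparison showing the flat profile $U\equiv1$ is ``unstable from below'' for radially decreasing data that actually blow up. The exclusion of $U\equiv1$ is where one must use that the solution genuinely concentrates: if $w(\cdot,s_j)\to1$ then the mass would spread out rather than aggregate, contradicting either finite mass (when $\Omega=\R^n$) or the boundary behavior on $B_R$ together with $\|u(t)\|_1=\|u_0\|_1$. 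Uniqueness of the limit along the full flow is then obtained by a \L ojasiewicz–Simon inequality for the rescaled energy, or alternatively by a direct monotonicity/ordering argument exploiting that $w$ is radially decreasing and comparing with the ordered family of profiles in $\mathcal S$.

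Having established part~(i), parts~(ii) and~(iii) follow by now-standard bootstrap and integration arguments. For part~(ii): the upper bound $u\le C_2/(T-t+|x|^2)$ is already known (cf.~\cite{Soup-Win}); the matching lower bound is obtained by combining the self-similar behavior \eqref{13.1} in the inner region $|x|\le K\sqrt{T-t}$ — where $U$ is bounded below away from zero on compacts — with the quadratic lower decay of $U$ at infinity (the stated property $\lim_{y\to\infty}y^2U(y)=C\in(0,\infty)$ for nonconstant $U\in\mathcal S$) to control the intermediate region $K\sqrt{T-t}\le|x|\le\rho$, using a barrier/comparison argument for the scalar equation $u_t=\Delta u+u^2-\nabla v\cdot\nabla u$ together with parabolic Harnack to propagate positivity. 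For part~(iii): letting $t\to T$ in \eqref{globalasympt} gives $C_1|x|^{-2}\le u(x,T)\le C_2|x|^{-2}$, and then combining \eqref{13.1} with the asymptotics of $U$ yields, for fixed $|x|$ small and $t\to T$, $u(x,t)\to C|x|^{-2}$ where $C=\lim_{y\to\infty}y^2U(y)$; continuity of $u(\cdot,T)$ on $\Omega\setminus\{0\}$ then upgrades this to $u(x,T)=(1+o(1))L|x|^{-2}$ with $L:=C>0$, and the strict positivity $L>0$ — which is what rules out the case $L=0$ left open in \cite{BZ} — is exactly the nonflatness $U\not\equiv1$ secured in part~(i).
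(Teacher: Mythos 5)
Your proposal has two genuine gaps, both concerning the passage from the \emph{microscopic} (similarity-variable) convergence to the \emph{macroscopic} statement \eqref{13.1}, which is the central new difficulty of the theorem.

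First, the mechanism you propose for establishing the self-similar limit in part (i) --- a Lyapunov/energy functional together with a \L ojasiewicz--Simon inequality --- will not work here. The rescaled problem for the averaged mass has no variational structure (the paper explicitly points this out when explaining why the energy method of Matano--Merle cannot be imported). The actual route is via Proposition~\ref{propA2}: a zero-number (intersection-comparison) argument gives non-oscillation of $\phi(0,s)$, and a nondegeneracy lemma (Lemma~\ref{lemA5}, itself a zero-number argument applied to the equation for $\phi_s$) shows that every $\omega$-limit is a steady state; uniqueness then follows because the $\omega$-limit set is connected and $\phi(0,s)$ has a limit. The exclusion of $U\equiv 1$ comes from the a priori upper bound $w\le C(T-t+|x|^2)^{-1}$ of \cite{Soup-Win}, not from a mass-spreading heuristic.

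Second, and more importantly, the convergence obtained in similarity variables (Proposition~\ref{propA2} or \cite{GMS}) is \emph{only} uniform for $|y|=|x|/\sqrt{T-t}$ bounded, i.e.\ in the parabolic inner region. The statement \eqref{13.1} is a full macroscopic statement, valid for all $(x,t)\to(0,T)$ including $|x|\gg\sqrt{T-t}$, and the paper repeatedly stresses that the microscopic result does \emph{not} imply the macroscopic one (nor the final profile nor the two-sided bound). Your outline conflates these two statements; the ``barrier/comparison plus Harnack'' gesture for the intermediate region is not a working mechanism. The paper's actual device is Proposition~\ref{lemdecaywt}, the bound $|u_t(x,t)|\le C|x|^{-4}$ near the singularity, proved by a bootstrap of scaled parabolic $L^q$ and Schauder estimates on the coupled system. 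With this in hand, one applies the microscopic convergence at the ``local time'' $t_0(x)=T-K^{-2}|x|^2$ and integrates $u_t$ over the short remaining interval $(t_0(x),T)$: its contribution is $O(K^{-2}|x|^{-2})$, negligible against $V\sim L|x|^{-2}$, while $V$ itself changes little on that interval because $U$ has quadratic decay. Without this $|u_t|\lesssim|x|^{-4}$ estimate (or an equivalent), the argument you outline cannot close; this is precisely what separates the present result from \cite{GMS}.

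Once \eqref{13.1} is secured, your derivations of (ii) and (iii) are essentially the same as the paper's (use $c_1\le(1+|y|^2)U(y)\le c_2$ for (ii), let $t\to T$ with $|x|$ fixed small for (iii)); those parts are fine.
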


Theorem~\ref{theo13}  will be a consequence of Theorem~\ref{theo13b} and of the following type I blow-up result.

\begin{prop}\label{propA1}
Let $3\le n\le 9$ and consider problem \eqref{0} with $\Omega=\R^n$ or $\Omega=B_R$.
Assume that $u_0$ satisfies \eqref{i0}, \eqref{hyp13.1b} and $T(u_0)<\infty$.
Then blow-up is of type I.
\end{prop}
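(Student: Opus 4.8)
The plan is to prove the type~I estimate $\|u(t)\|_\infty\le M(T-t)^{-1}$ by a rescaling and classification argument, exploiting the restriction $3\le n\le 9$ through a Liouville-type theorem for the elliptic self-similar profile equation \eqref{eqnPsi}. First I would introduce the standard similarity variables centered at the origin (the blow-up point, by \eqref{hyp13.1b} and the remark that $B(u_0)=\{0\}$), setting
$$w(y,s)=(T-t)\,u\big(\sqrt{T-t}\,y,t\big),\qquad s=-\log(T-t),$$
together with the analogous rescaling of $v$, which turns \eqref{0} into a rescaled parabolic-elliptic system for $(w,z)$ on an expanding domain, whose stationary radial solutions are exactly the elements $U\in\mathcal S$. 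The monotonicity of $u_0$ in $|x|$ is preserved, so $w(\cdot,s)$ is radially nonincreasing for all $s$; this is the structural feature that makes the argument tractable in the parabolic-elliptic case.

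The core step is to argue by contradiction: if blow-up is not of type~I, then $\limsup_{s\to\infty}w(0,s)=+\infty$, and one extracts a sequence $s_j\to\infty$ along which $w(0,s_j)\to\infty$ with suitable near-maximality. Rescaling a second time by the amplitude $w(0,s_j)$ (a zoom-in on the singularity of the rescaled solution) and using parabolic regularity together with the mass/energy bounds available in similarity variables (the Lyapunov functional for the rescaled flow, cf.~the monotonicity identity underlying the convergence results of \cite{GMS}), one passes to a limit. The limit will be a nonnegative, radially nonincreasing, entire (in space) solution of the limiting equation, and the Lyapunov functional forces it to be stationary, i.e.~a steady state of the limit problem which — because the second rescaling kills the mass at any fixed scale — is a solution of the elliptic profile equation on all of $\R^n$ with value $1$ (or $0$) at the origin up to normalization, vanishing appropriately at infinity. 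Here the dimension restriction enters: for $3\le n\le 9$, the relevant Liouville theorem (no nontrivial bounded radial steady state of the limit profile equation, analogous to the Joseph–Lundgren exponent phenomenon for $u_t=\Delta u+u^2$) rules out such a limit, giving the contradiction. An alternative, and probably cleaner, route is to invoke directly a known Liouville-type classification for the elliptic system \eqref{eqnPsi}-type equation in the subcritical range $n\le 9$, reducing the whole matter to: (a) establishing the rescaled-energy monotonicity and the resulting asymptotic stationarity; (b) the compactness needed to extract the limit profile; (c) quoting the Liouville theorem.

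The main obstacle I anticipate is step~(b): obtaining uniform-in-$j$ a~priori bounds sufficient to extract a nondegenerate limit after the second rescaling, specifically controlling the chemoattractant gradient $\nabla z$ near the concentration point so that the drift term does not blow up in the limit. In the parabolic-elliptic setting this is helped by the explicit representation $\nabla z$ in terms of the mass function $m(r,s)=\int_{B_r}w$, which is monotone and bounded by the radial-decreasing structure; the delicate point is to show the rescaled drift stays bounded on the zoom-in scale, or else that the limit equation degenerates to the pure profile ODE \eqref{eqnPsi} in a controlled way. A secondary technical issue is handling the expanding domain / boundary in the case $\Omega=B_R$, which is routine since the blow-up is interior. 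Once these compactness points are secured, the contradiction with the $n\le 9$ Liouville theorem closes the proof.
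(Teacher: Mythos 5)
There is a genuine gap, and it is at the heart of your plan: you invoke a ``Lyapunov functional for the rescaled flow, cf.~the monotonicity identity underlying the convergence results of \cite{GMS}''. No such functional exists. The parabolic--elliptic Keller--Segel system for $n\ge 3$ has no known variational structure in similarity variables, and \cite{GMS} does not use an energy argument at all --- it relies on zero-number/intersection-comparison together with analyticity estimates. The paper itself points this out in Section~\ref{ideas}: the energy method of \cite{MM09} ``cannot be implemented here due to the lack of variational structure.'' So step~(a) of your outline (rescaled-energy monotonicity $\Rightarrow$ asymptotic stationarity) has no foundation, and the second-rescaling compactness in step~(b) loses its main tool for identifying the limit as a steady state.

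A second problem is the ``Liouville theorem'' you quote for $3\le n\le 9$. In the form you state it --- ``no nontrivial bounded radial steady state of the limit profile equation'' --- it is false. The regular steady state $W_1$ of the averaged-mass equation (defined in \eqref{eqW1}) is a positive, bounded, globally defined radial solution for every $n\ge 3$ (see Remark~\ref{Proof-lemA1B}), and the profile ODE \eqref{eqnPsi} has the whole countable family $\mathcal{S}$ of nonconstant solutions. What actually enters at $3\le n\le 9$ is Lemma~\ref{lemA1B} (from~\cite{MS2}): the Joseph--Lundgren-type \emph{oscillation} statement that $W_1-W_*$, with $W_*=2/r^2$ the singular steady state, has infinitely many nondegenerate zeros. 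The paper's contradiction then comes entirely from intersection counting: a type-II scenario forces (Lemma~\ref{propA1app}) a zoom-in of $w$ to converge to $W_1$, which would make the zero number $Z_{[0,R_1]}(w(\cdot,t)-W_*)$ grow without bound, contradicting the uniform bound from Lemma~\ref{lemA0}. Your Lemma~\ref{propA1app} analogue is also obtained by a zero-number argument (monotonicity of $Z_{[0,R_0]}(w_t)$), not by any energy identity. Finally, in the case $\Omega=\R^n$ the paper needs backward uniqueness for parabolic equations in exterior domains (Proposition~\ref{backuniq}) to guarantee that the zero numbers used above are finite and non-increasing; your proposal does not address how to prevent zeros from coming in from infinity.
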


Proposition~\ref{propA1} in the case $\Omega=\R^n$ was proved in \cite{MS2}.
As for the case $\Omega=B_R$, Proposition~\ref{propA1} was proved in \cite{GP} 
but under restrictive additional assumptions involving intersection properties of $u_0$.
A~simpler proof covering both cases for general $u_0$ is given in Section~\ref{SecTypeI}
(see the discussion of methods in Section~\ref{ideas}).

\smallskip
On the other hand, we shall give partial extensions of the above results when the type I assumption is  replaced with the following condition on~$u_0$:
  \be{i2}
  	u_0\in C^1(\overline\Omega),\ \quad r^{n-1} u_{0,r}(r) + u_0(r) \int_0^r  u_0(s)s^{n-1} ds \ge 0,
	\ \mbox{ for all } r\in (0,R).
	\ee
Condition \eqref{i2} guarantees the time monotonicity of the averaged mass function (cf.~\eqref{w} below).
In this situation, although we cannot establish the sharp convergence  and profile as in Theorem~\ref{theo13}(i)(iii)
we can still obtain a satisfactory global lower bound on the space-time blow-up profile.

\begin{theo}\label{theo14}  Let $n\ge 3$ and consider problem \eqref{0}
with $\Omega=\R^n$ or $\Omega=B_R$.
Let $u_0$ satisfy \eqref{i0}, \eqref{i2} and $T(u_0)<\infty$.
 If $\Omega=\R^n$, assume in addition that $B(u_0)\ne \R^n$.
 Then there exists $C_1>0$ such that 
  \be{13.1b}
	 u(x,t) \ge  \frac{C_1}{u^{-1}(0,t)+|x|^2},
	\qquad (x,t)\in B_\rho\times[T/2,T),
  \ee
  with $\rho=R$ if $R<\infty$ and $\rho=1$ otherwise.
     \end{theo}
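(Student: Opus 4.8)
The plan is to pass to the radial mass function $w(r,t):=\int_0^r u(s,t)s^{n-1}\,ds$, for which $w_r=r^{n-1}u\ge 0$. From $\Delta v+u=0$ and the boundary condition one gets $v_r=-r^{1-n}w$, and integrating the $u$-equation over $B_r$ gives
\[
w_t=w_{rr}-\frac{n-1}{r}\,w_r+r^{1-n}w\,w_r=r^{n-1}u_r+uw,\qquad w(0,t)=0 .
\]
A direct computation shows that hypothesis \eqref{i2} is precisely the inequality $w_t(\cdot,0)\ge 0$. I would then differentiate this equation and apply the parabolic comparison principle to $z:=w_t$ (the zeroth-order coefficient, $u\ge 0$, is handled by the usual exponential substitution on intervals $[0,T']$, $T'<T$), which solves a linear equation with $z(0,t)=0$ and --- in the ball case, by mass conservation --- $z(R,t)=0$; the whole-space case additionally needs a Phragm\'en--Lindel\"of-type argument. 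This gives $w_t\ge 0$ on $\Omega\times(0,T)$, hence the two facts used throughout: the pointwise differential inequality
\[
-\,\frac{\partial_r u(r,t)}{u(r,t)}\ \le\ r^{1-n}w(r,t)\qquad\text{in }\Omega\times(0,T),
\]
and --- since $w(r,t)=\tfrac1n u(0,t)\,r^n(1+o(1))$ as $r\to0$ --- the monotonicity in $t$ of $a(t):=u(0,t)=\|u(t)\|_\infty$ (which is also continuous on $(0,T)$ and $\to\infty$ at $T$).

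\textbf{Inner region.} Integrating the displayed inequality from $0$ to $r$, using the trivial bound $w(s,t)\le\tfrac1n a(t)s^n$, yields an inner estimate: for some $c_0>0$, one has $u(r,t)\ge\tfrac12 a(t)$ whenever $r\le c_0\,a(t)^{-1/2}$. Since $C_1\bigl(a(t)^{-1}+r^2\bigr)^{-1}\le C_1 a(t)$, this already gives \eqref{13.1b} for $|x|\le c_0\,a(t)^{-1/2}$ as soon as $C_1\le\tfrac12$.

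\textbf{Outer region (the core).} Here one must show $u(r,t)\gtrsim r^{-2}$ for $c_0 a(t)^{-1/2}\le r\le\rho$, and the time monotonicity of $w$ is used decisively. For small $r>0$ let $\tau_r\in(0,T)$ be the time with $a(\tau_r)=(c_0/r)^2$. By the inner estimate $u(s,\tau_r)\ge\tfrac12 a(\tau_r)$ for $s\le r$, so $w(r,\tau_r)\ge\tfrac12 a(\tau_r)\tfrac{r^n}{n}\ge c_1 r^{n-2}$ for some $c_1>0$. Since $w_t\ge 0$, this persists for all later times: $w(r,t)\ge c_1 r^{n-2}$ whenever $t\ge\tau_r$, i.e. for all $r\in[c_0 a(t)^{-1/2},\rho_1]$, $t\in[T/2,T)$, with $\rho_1>0$ a fixed small radius. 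On the other hand the upper estimate of \cite{Soup-Win}, $u(x,t)\le C(T-t+|x|^2)^{-1}$ --- available here under \eqref{i0}, after a localization in the whole-space case --- integrates to $w(r,t)\le C_2 r^{n-2}$ on $B_\rho\times[T/2,T)$. Choose an integer $N$ with $c_1 N^{n-2}\ge 2C_2$, possible precisely because $n\ge 3$. Then for $r$ with $c_0 a(t)^{-1/2}\le Nr\le\rho_1$ we get $w(Nr,t)-w(r,t)\ge c_1 N^{n-2}r^{n-2}-C_2 r^{n-2}\ge\tfrac12 c_1 N^{n-2}r^{n-2}$, while, since $u$ is nonincreasing in $r$, $w(Nr,t)-w(r,t)=\int_r^{Nr}u(s,t)s^{n-1}\,ds\le u(r,t)\tfrac{(Nr)^n}{n}$; combining, $u(r,t)\ge c_3 r^{-2}$, which gives \eqref{13.1b} on that range. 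Finally, on the fixed annulus $\rho_1/N\le|x|\le\rho$, standard parabolic estimates --- blow-up occurring only at the origin, by \eqref{i0} together with $B(u_0)\ne\R^n$ --- give $u\ge\mu>0$, so \eqref{13.1b} holds there too. These three regimes cover $B_\rho$ for every $t\in[T/2,T)$, so taking $C_1$ to be the minimum of the (finitely many) constants produced completes the proof.

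\textbf{Main obstacle.} The hard part is the outer region. The inequality $-\partial_r u/u\le r^{1-n}w$ combined \emph{only} with a bound $w(r,t)\le\kappa r^{n-2}$ yields no more than $u(r,t)\gtrsim a(t)^{1-\kappa/2}r^{-\kappa}$, which is too weak for the target $u\gtrsim r^{-2}$ unless $\kappa\le 2$, i.e. unless $w$ stays below the Chandrasekhar mass $2r^{n-2}$; but $\kappa$ is genuinely larger --- the explicit profile $U_0$ has final profile $4(n-2)|x|^{-2}$, so $w(r,T)=4r^{n-2}$ and $\kappa\ge 4$. The device that circumvents this, used above, is that monotonicity of $w$ in $t$ lets the \emph{sharp} inner rate --- valid at the specially chosen time $\tau_r$ --- be transported forward to all later times, after which a Chebyshev/annulus argument with a large dilation factor $N$ converts the resulting mass lower bound into the pointwise bound $u\gtrsim r^{-2}$, absorbing the a priori large gap between the inner constant and the Souplet--Winkler constant $C_2$; this last step relies on $n\ge 3$. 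The remaining points --- justifying $w_t\ge 0$ up to the boundary and at infinity, and the uniform positivity of $u$ away from the origin up to $t=T$ --- are routine.
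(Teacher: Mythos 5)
Your proof is correct and follows the same overall strategy as the paper's (Lemmas~\ref{lem-monot1}--\ref{lem-monot2}): use $w_t\ge 0$ (the consequence of~\eqref{i2}) twice --- once to establish a scale-$a(t)^{-1/2}$ inner estimate, and once to propagate it forward in time into the outer region --- and then pair the resulting mass lower bound against the Souplet--Winkler upper bound in a dilation/annulus argument, using $u_r\le 0$, to convert the integrated estimate into a pointwise one. The substantive difference is the derivation of the inner estimate. The paper uses the Friedman--McLeod energy monotonicity $\partial_r\bigl(\tfrac12 w_r^2+\tfrac n3 w^3\bigr)\le 0$ (valid once $w_t\ge 0$ and $w_r\le 0$) to get $|w_r|\le Cm^{3/2}(t)$ and hence a lower bound on the \emph{averaged} mass $w$ near $r=0$; you instead rewrite $W_t\ge 0$ as the Riccati-type pointwise inequality $-\partial_r\log u\le r^{1-n}W$, insert the trivial bound $W\le\tfrac1n a(t)r^n$, and integrate to get the inner estimate on $u$ itself. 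This is a genuinely different --- and arguably slicker --- way to exploit~\eqref{i2}: it bypasses the energy functional and bounds $u$ directly. The remaining differences (you treat inner/outer/far-field regions separately, whereas the paper's Lemma~\ref{lem-monot2} runs the annulus argument uniformly in $r$ starting from the two-sided bound on $w$) are cosmetic. One small inaccuracy: the Souplet--Winkler upper bound available under the present hypotheses is $u(x,t)\le C\bigl(u^{-1}(0,t)+|x|^2\bigr)^{-1}$, not $u(x,t)\le C(T-t+|x|^2)^{-1}$ --- the latter would require type~I blow-up, which is not assumed in Theorem~\ref{theo14}; fortunately the consequence you actually use, $W(r,t)\le C_2 r^{n-2}$, follows just as well from the correct form since $n\ge3$. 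Your ``main obstacle'' paragraph correctly identifies why a naive integration of $-\partial_r\log u\le r^{1-n}W$ against the upper bound alone fails (the Chandrasekhar threshold $\kappa=2$ is exceeded) and why the persistence-plus-annulus mechanism is needed.
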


Combining with \cite[Theorem~1.3 and Remark~(vi)~p.670]{Soup-Win}, we obtain the two sided estimate
giving the qualitatively precise global description of the space-time blow-up behavior,
analogous to Theorem~\ref{theo13}(ii).

\begin{cor}
 Under the assumptions of Theorem~\ref{theo14}, there exist $C_1,C_2>0$, such that
\be{profileu0t}
\frac{C_1}{u^{-1}(0,t)+|x|^2}\le u(x,t) \le \frac{C_2}{u^{-1}(0,t)+|x|^2}
	\qquad (x,t)\in B_\rho\times[T/2,T).
\ee
\end{cor}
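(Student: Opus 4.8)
The plan is to obtain the Corollary immediately by combining the lower bound of Theorem~\ref{theo14} with the corresponding upper bound established in \cite{Soup-Win}. Recall that Theorem~\ref{theo14} already provides
\[
u(x,t)\ge\frac{C_1}{u^{-1}(0,t)+|x|^2},\qquad (x,t)\in B_\rho\times[T/2,T).
\]
So the only thing that remains is to quote the matching upper estimate in the same scale and on a comparable space-time region, then take the larger of the two $B_\rho$'s and the smaller of the two starting times (both reductions being harmless after relabeling the constants).

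First I would invoke \cite[Theorem~1.3 and Remark~(vi), p.~670]{Soup-Win}, which under assumptions \eqref{i0} and \eqref{i2} (the latter guaranteeing time monotonicity of the averaged mass function~$w$ defined in \eqref{w}, so that $u^{-1}(0,t)$ is a genuine, nondecreasing modulus for the singularity as $t\to T$) gives a constant $C_2>0$ with
\[
u(x,t)\le\frac{C_2}{u^{-1}(0,t)+|x|^2}
\]
on a set of the form $B_{\rho'}\times[t_0,T)$. Here one uses that $\|u(\cdot,t)\|_\infty = u(0,t)$ by radial monotonicity, so that $u^{-1}(0,t)$ is exactly the inverse of the sup-norm, which is the natural time scale appearing in the cited upper bound; thus the two estimates are phrased in identical variables and no rescaling is needed to match them. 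Intersecting the two domains and shrinking $B_{\rho'}$ to $B_\rho$ (and $t_0$ up to $T/2$, enlarging $C_2$ if necessary to absorb the compact region $B_\rho\times[T/2,t_0]$, where $u$ is bounded and $u^{-1}(0,t)+|x|^2$ is bounded below by a positive constant) yields \eqref{profileu0t} on all of $B_\rho\times[T/2,T)$.

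The only mild technical point — not really an obstacle — is checking that the hypotheses under which \cite{Soup-Win} proves its upper bound are implied by \eqref{i0} together with \eqref{i2} (and, when $\Omega=\R^n$, by the extra condition $B(u_0)\ne\R^n$, which localizes the blow-up to a neighborhood of the origin and lets one work on a fixed ball $B_\rho$). Granting that, the Corollary is just the conjunction of Theorem~\ref{theo14} and the cited result, and its proof is complete. $\square$
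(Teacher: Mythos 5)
Your proposal is correct and follows exactly the paper's own route: the corollary is obtained by pairing the lower bound of Theorem~\ref{theo14} with the upper bound from \cite[Theorem~1.3 and Remark~(vi)~p.670]{Soup-Win}, after a harmless adjustment of the space-time region and constants. Nothing further is needed.
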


\section{Remarks and discussion}

\begin{rem} \label{remopen}
(i) {\bf(Blow-up set)} Assume \eqref{i0} with $\Omega=\R^n$. 
Then there holds either $B(u_0)=\{0\}$ or $B(u_0)=\R^n$.
Indeed, if some $\rho>0$ is not a blow-up point then, by integrating the first equation in \eqref{0}
over $B_\rho$ and using $u_r\le 0$, we obtain $\sup_{t\in (0,T)}\int_{B_\rho} u(t)\,dx<\infty$,
hence $u(x,t)\le C|x|^{-n}$ in $B_\rho\times(0,T)$, which yields $B(u_0)=\{0\}$.
When $u_0\in L^1(\R^n)$, the mass conservation property guarantees that $B(u_0)=\{0\}$.
When $\Omega=B_R$ we of course have $B(u_0)=\{0\}$ by the same argument.
Whether $B(u_0)=\R^n$ can occur unless $u_0$ is constant remains an open problem. 

\smallskip

(ii) {\bf(Time monotonicity)} For $n\ge 10$, it is still unknown whether the time monotonicity of the averaged mass function \eqref{w} 
 (guaranteed by condition \eqref{i2})
is sufficient to ensure type~I blow-up for system \eqref{0} under assumption \eqref{i0} (cf.~the discussion after \cite[Remark~1.4]{Naito-Senba-2012}).
The analogous property is true for the Fujita equation $u_t-\Delta u=u^p$ (cf.~\cite{FML} and \cite[Section 23]{quittner_souplet})  but the method does not seem to be easily extendable to \eqref{0} even in radial setting.

\smallskip
 
(iii) {\bf(Final profile)}  Assume that
\be{hypremopen3}
\Omega=\R^n, \hbox{ $u_0$ satisfies \eqref{i0}, and } B(u_0)\ne \R^n.
\ee
The result of \cite{BZ}, showing that either the sharp final profile behavior \eqref{finalblow-up} holds or $\lim_{x\to 0}|x|^2u(x,T)=0$,
is more generally valid under the condition that the limit 
\be{hypBZ}
\lim_{|x|\to\infty} |x|^{-n-2}\int_0^{|x|}  u_0(s) s^{n-1}ds\ \hbox{ exists in $[0,\infty]$}
\ee
(so that $u_0\in L^1(\R^n)$ is a special case).
Our Theorem~\ref{theo13b} shows that \eqref{finalblow-up} holds whenever blow-up is of type~I (so that $n\in[3,9]$ is a special case),
and also covers the case $\Omega_R$.
We do not know if \eqref{finalblow-up} holds whenever \eqref{hypremopen3} is true.
 \end{rem}

\begin{rem} \label{remTypeI}
{\bf(Type I and type II solutions)} 
(i) In dimensions $n\ge 3$, the self-similar or asymptotically self-similar type I solutions described above coexist with other kinds of solutions.
A class of solutions with type II blow-up has been constructed in \cite{Collot-2023}, making rigorous a formal argument from \cite{HMV}.
They constitute an open set of radial solutions in $L^\infty(\R^n)$ and are of the form (see \cite[Theorem~1.1]{Collot-2023}):
 \be{typeIIcosh}
u(x,t)= \frac{1}{\lambda^2(t)} \Big[W\Big(\frac{x-R(t)}{\lambda(t)}\Big)
+\bar u(x,t)\Big], \quad (x,t)\in\R^n\times (0,T), 
\ee
where $\ds\lim_{t\to T}\|\bar u(\cdot,t)\|_{L^\infty(\R^n)}=0$, 
$c_n=(n/2)^{1/n}$, $K=K(u_0)\in(0,\infty)$, and
$$W(\xi)=\frac18 cosh^{-2}(\xi/4),\quad
\lambda(t)\sim K(T-t)^{(n-1)/n},
\quad
R(t)\sim c_n K^{-1/n}(T-t)^{1/n},\quad\hbox{as } t\to T.$$
In particular their blow-up rate is given by $\|u(t)\|_\infty\sim (T-t)^{-2(n-1)/n}$.
Let us mention that a different class of type II blow-up radial solutions is also known to exist in dimensions $n\ge 11$ (see~\cite{MS}),
with a countable spectrum of blow-up rates of the form $\|u(t)\|_\infty\sim (T-t)^{-\gamma j}$ where $\gamma=\gamma(n)\ge 2/3$
and $j$ is an arbitrary integer $\ge 2$.
 On the other hand, in the case $n=3$ or $4$, radial solutions with a new, single-point type I blow-up mechanism
 have been constructed in \cite{Nguyen-Zaag-2023}.
 These solutions, obtained by a suitable linearization around the self-similar profile $U\equiv 1$,
satisfy:
 $$(T-t)u(y\sqrt{T-t},t)=1-\frac{\phi_{2\ell}(y)+o(1)}{B_\ell |\log (T-t)|}\quad \hbox{as }t\to T,$$
locally uniformly in $y$,
 where $\ell=n/(n-2)\in\{2,3\}$, $B_{\ell}$ is an integer, and the function $\phi_{2\ell}$ is a 
polynomial of degree $2\ell$ satisfying $\Delta \phi_{2\ell}-(1/2\ell)y\partial_y\phi_{2\ell}+\phi_{2\ell}=0$. 
  We stress that these solutions are radial but not decreasing in $|x|$,
  and are in sharp contrast with the result of Theorem~\ref{theo13}, 
  which rules out the occurence of the self-similar profile $U\equiv 1$ when $u_0$ is radially decreasing and $B(u_0)=\{0\}$.

\smallskip

(ii) The case $n=2$ is quite different, since any blow-up solution is of type II; see \cite{Naito-2008,Suzuki-2011,Miz22}.
These references, as well as \cite{HV,Vel,RS,Collot-2022}, contain information on the detailed blow-up asymptotics of solutions.
In particular, solutions of the form
$$
u(x,t)=\frac{1}{\lambda^2(t)} (U+\tilde u)\Big(\frac{x-x^*(t)}{\lambda(t)}\Big),\quad \hbox{as } (x,t)\to (0,T) 
$$
are constructed,
where the profile $U(\xi)=8 (1+|\xi|^2)^{-2}$ is a steady state,
$\lambda(t)\sim c\sqrt{T-t}\,e^{-\sqrt{|\log(T-t)|/2}}$, $\|\tilde u(\cdot,t)\|_{H^1(\R^2)}\to 0$, 
$x^*(t)\to 0$ as $t\to T$, and $c>0$ is an explicit universal constant.
This behavior, with blow-up rate $\|u(t)\|_\infty\sim (T-t)^{-1}e^{\sqrt{2|\log(T-t)|}}$,
 is stable (even in nonradial setting),
but other solutions with more singular (and unstable) type II blow-up rates 
are also constructed in \cite{Collot-2022}, with blow-up rates given by
 $\|u(t)\|_\infty\sim C_0(T-t)^{-j} |\ln(T-t)|^{-j/(j-1)}$ where $j$ is an arbitrary integer $\ge 2$.
Nonradial solutions involving the collision of two or more
 collapsing-rings have been constructed in 
\cite{SSV,Collot-2024,BDPM23}
 (see also \cite{BDPM25,HNS} for analogous results in the 3d-axisymmetric case).
The case $n=2$ also involves mass threshold and mass quantization phenomena that are absent for $n\ge 3$;
see \cite{Dolbeault-2004,BDP,BCM,BKLN,SuzBook,KaSou,Davila-2020,Ghoul-2018}.
In particular the final blow-up singularities are of Dirac mass type,
unlike the locally $L^1$ final profile 
of radially decreasing self-similar or type I solutions for $n\ge 3$~(cf.~\cite{Soup-Win}).
\end{rem}

\section{Ideas of proofs} \label{ideas}

As in previous work (see, e.g., \cite{BCKSV}, \cite{GP}), 
for a radial classical solution $(u,v)$ of \eqref{0}, we shall use the averaged mass function 
\be{w}
	w(r,t):=|B_r|^{-1} \int_{B_r}  u(x,t) \,dx=r^{-n}\int_0^r  u(s,t) s^{n-1}\,ds, 
	\qquad r\in (0,R), \ t\in [0,T)
\ee
(where $0<R\le\infty$), which satisfies a scalar parabolic equation
with quadratic reaction and critical convection terms (see \eqref{e1} below). 
Also we note that, by \eqref{w}, we have
\be{reluw} 
u=nw+rw_r.
\ee  
 We now describe the main ideas of proofs of our various results.

\runinsubsection{Theorems~\ref{theo13}-\ref{theo13b}.}
{\hskip -3mm}The starting point is the following microscopic version of 
 the convergence part of Theorem~\ref{theo13b}
(without radial monotonicity assumption).

\begin{prop}\label{propA2}
Let $n\ge 3$ and consider problem \eqref{0} with $\Omega=\R^n$ or $\Omega=B_R$, 
where $u_0\in L^\infty(\Omega)$, $u_0\ge 0$, is radially symmetric.
Assume that $T(u_0)<\infty$, blow-up is of type~I  and $B(u_0)=\{0\}$.
 
\smallskip
\noindent (i) Then there exists a nonnegative solution $\Psi\in C^2([0,\infty))$ of \eqref{eqnPsi} such that 
\be{cvpropA2w}
(T-t)w\big(y\sqrt{T-t},t\big)\underset{t\to T}{\longrightarrow} \Psi(y)\quad \hbox{in $C^1_{loc}(\R^n)$},
\ee
hence 
\be{cvpropA2}
(T-t)u\big(y\sqrt{T-t},t\big)\underset{t\to T}{\longrightarrow} U(y):=n\Psi (y)+y\Psi'(y)\quad 
\hbox{uniformly for $|y|$ bounded}.
\ee
 (ii) Assume in addition that $u_0$ is nonincreasing with respect to $|x|$,
and that $u_0$ is nonconstant in case $\Omega=\R^n$. Then $\Psi'<0$ in $(0,\infty)$,
hence in particular $U\not\equiv 0$ and $U\not\equiv 1$.
\end{prop}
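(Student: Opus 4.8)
The plan is to run the standard parabolic rescaling argument around the blow-up point, adapted to the scalar equation satisfied by the averaged mass function $w$. First I would write down the equation for $w$ obtained from \eqref{0}; since $u=nw+rw_r$ and $v_r=-rw$ (the latter from $\Delta v=-u$ integrated on $B_r$, using the boundary/decay conditions), substituting into $u_t=\Delta u-\nabla\cdot(u\nabla v)$ and integrating once gives a scalar equation of the form $w_t=w_{rr}+\frac{n+1}{r}w_r+n w^2+r w w_r$ on $(0,R)\times(0,T)$ (this is the equation referred to as \eqref{e1}). Then introduce similarity variables: fix the blow-up point $0$ and set
\be{rescaledw}
\varphi_\lambda(y,s):=\lambda^2 w(\lambda y,\,T-\lambda^2+\lambda^2 e^{-s})
\ee
or, more simply, work directly with $W_t(y):=(T-t)w(y\sqrt{T-t},t)$. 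The key inputs are: (a) the type~I bound $\|u(t)\|_\infty\le M(T-t)^{-1}$, which via $w\le \|u\|_\infty$ gives a uniform bound $\sup_y W_t(y)\le M$; (b) since $B(u_0)=\{0\}$, standard parabolic regularity away from the origin gives uniform interior estimates on $W_t$ on compact subsets of $y\ne 0$, while near $y=0$ the bound on $w$ together with the structure of the equation (the coefficient $\frac{n+1}{r}$ being a dimension shift making the origin a regular interior point for a radial problem in $\R^{n+2}$) yields $C^{2,1}_{loc}$ estimates up to $y=0$. With these I would extract, along any sequence $t_j\to T$, a subsequential limit $\Psi$ in $C^1_{loc}$ (indeed $C^2_{loc}$) solving the stationary rescaled equation, which is precisely \eqref{eqnPsi} with $\Psi'(0)=0$, and $\Psi\ge 0$. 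To upgrade convergence from subsequential to full convergence as $t\to T$, I would invoke monotonicity in similarity variables: the equation for $w$ in self-similar variables possesses a Lyapunov functional (energy), so $\omega$-limit sets consist of stationary solutions and, combined with the fact that the limit profile at the blow-up point is determined, one concludes the full limit \eqref{cvpropA2w}. Then \eqref{cvpropA2} follows immediately from \eqref{reluw} and the $C^1$ convergence, since $U=n\Psi+y\Psi'$.

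For part~(ii), the extra hypothesis is radial monotonicity of $u_0$ (hence of $u(\cdot,t)$, which is preserved by the flow), equivalently $w_r\le 0$, equivalently $U=nw+rw_r\le nw$; but more usefully, I would exploit the precise sign structure. Passing to the limit, $\Psi_y\le 0$ on $(0,\infty)$. To get the strict inequality $\Psi_y<0$, I would argue by the strong maximum principle applied to the ODE/parabolic problem for $w_r$ (or directly for $W_t'$): $\Psi_y$ satisfies a linear second-order ODE obtained by differentiating \eqref{eqnPsi}, it is $\le 0$, and it vanishes at $0$; if $\Psi_y(y_0)=0$ for some $y_0>0$ then either the strong maximum principle forces $\Psi_y\equiv 0$ near $y_0$ and then everywhere, giving $\Psi\equiv$const$=1/n$, i.e. $U\equiv 1$, or a Hopf-type argument at $y_0$ gives a contradiction with the sign. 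The case $\Psi\equiv 1/n$ (i.e. $U\equiv 1$) must be excluded under the stated assumptions: when $\Omega=B_R$, the Dirichlet-type boundary condition $w(R,t)$ stays bounded as $t\to T$ (no blow-up at the boundary), which is incompatible with $W_t\to 1/n$ uniformly on a fixed compact set that, after unscaling, would force $w$ to be large near $r=R$ — more precisely one rules out the flat profile because it does not match the boundary data / finite mass; when $\Omega=\R^n$ with $u_0$ nonconstant, $u\equiv(T-t)^{-1}$ is the only solution with flat profile everywhere and a nonconstant $u_0$ cannot produce it — one uses that $w(r,t)\to$ a finite limit as $r\to\infty$ (from $u\ge 0$ and $u_r\le0$) together with the conservation/monotonicity of mass to contradict $W_t\to 1/n$ for all bounded $|y|$ unless $u_0$ were already $(T(u_0))^{-1}$-flat.

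The main obstacle I anticipate is the full (not just subsequential) convergence in \eqref{cvpropA2w}, i.e. identifying the $\omega$-limit as a single profile and ruling out oscillation between different stationary solutions. This is where the Lyapunov structure of the rescaled $w$-equation is essential; I would need the energy functional to be well defined despite the unbounded domain / weights (a weighted $L^2$ space with Gaussian-type weight, as is classical for Fujita-type equations in similarity variables), monotone along the flow, and bounded thanks to the type~I estimate, and then invoke the fact that nonnegative stationary solutions of \eqref{eqnPsi} are isolated in the relevant topology (or at least that the connected components of the limit set are trivial because the limit equation is an ODE and the boundary condition $\Psi'(0)=0$ with the prescribed value $\Psi(0)=\lim (T-t)w(0,t)$ — itself convergent by monotonicity considerations or by a separate argument — pins down $\Psi$ uniquely by ODE uniqueness). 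A secondary technical point is justifying the estimates uniformly up to $y=0$; this I would handle by the change of variables turning the radial equation with weight $(n+1)/r$ into a genuinely $(n+2)$-dimensional radial heat-type equation, so that the origin is an ordinary interior point and no extra care is needed there beyond classical interior parabolic regularity.
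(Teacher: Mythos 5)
Your overall structure (rescale the averaged mass function $w$ by similarity variables, use type I plus interior parabolic regularity in $n+2$ dimensions to get compactness, extract a subsequential limit solving \eqref{eqnPsi}, then upgrade to full convergence, then argue $\Psi'<0$) matches the paper's, and you correctly identify that the main difficulty is the upgrade from subsequential to full convergence. But the tool you propose for that step does not exist here, and this is a genuine gap.

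You propose a Lyapunov/energy functional in weighted $L^2$ with Gaussian weight, as for the Fujita equation in similarity variables. However, the rescaled equation for $\phi(y,s)=(T-t)w(y\sqrt{T-t},t)$ is
$$\phi_s=\phi_{yy}+\Big(\tfrac{n+1}{y}-\tfrac{y}{2}\Big)\phi_y-\phi+n\phi^2+y\phi\phi_y,$$
and the quadratic convection term $y\phi\phi_y$ destroys the gradient-flow structure; there is no known energy functional that is monotone along this flow. The paper says this explicitly (when contrasting with \cite{MM09} for Fujita): ``such an argument cannot be implemented here due to the lack of variational structure.'' So the step on which your whole plan for part (i) hinges cannot be carried out by energy methods. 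The paper instead uses two zero-number (intersection-comparison) ingredients: first, nonoscillation of $\phi(0,s)$ as $s\to\infty$, proved by intersection-comparison with the profiles $\varphi_\alpha$ of \eqref{eqlimsta} and their injective asymptotic constants $\Lambda_\alpha$ (Lemma~\ref{lemA3}); second, a nondegeneracy property for radial linear parabolic equations (Lemma~\ref{lemA5}, again a zero-number argument) applied to the equation satisfied by $\phi_s$ along any $\omega$-limit trajectory, which forces each $\omega$-limit to be a steady state. Combined with the pinned value $\Psi(0)=\lim_s\phi(0,s)$ and ODE uniqueness for \eqref{eqnPsi}, this yields the single-limit conclusion. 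Your proposal recognizes the need to pin down $\Psi(0)$ and use ODE uniqueness, but you wave at its convergence (``by monotonicity considerations or by a separate argument''); there is no monotonicity of $\phi(0,s)$ in general, and this nonoscillation is a substantive intersection-comparison step, not a formality.

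For part (ii), your idea is essentially correct and close to the paper's: radial monotonicity gives $\Psi'\le 0$, and if $\Psi'(y_0)=0$ for some $y_0>0$ then $\Psi''(y_0)=0$ too, so \eqref{eqnPsi} forces $\Psi(y_0)\in\{0,1/n\}$ and ODE uniqueness gives $\Psi\equiv 0$ or $\equiv 1/n$. The paper excludes $\Psi\equiv 0$ from the ODE bound $w(0,t)\ge n^{-1}(T-t)^{-1}$ (so $\Psi(0)\ge 1/n$), and excludes $\Psi\equiv 1/n$ by invoking the upper bound $w\le C(T-t+|x|^2)^{-1}$ of \cite{Soup-Win}, which forces $\Psi$ to decay; your proposed exclusion via boundary/mass considerations is in the same spirit but less direct. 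In summary: part (ii) is roughly on track, but part (i) has a fatal gap in the step you correctly flagged as the hard one, and the remedy is the zero-number machinery, not an energy argument.
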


In the case $\Omega=\R^n$, Proposition~\ref{propA2}
was proved in  \cite[Theorem~1.1]{GMS} 
 (except for the property  $U\not\equiv 1$ in the radial decreasing case).
In the case $\Omega=B_R$, this was proved in \cite{GP} under restrictive additional assumptions on the initial data
(which in particular force $w$ to select a limiting profile $\Psi$ with exactly one intersection with the singular solution 
$\psi^*(y)=2y^{-1}$ of \eqref{eqnPsi}). In Section~\ref{SecLocalCv}, we will give a simplified 
proof of Proposition~\ref{propA2} (and covering both $\Omega=\R^n$ and $\Omega=B_R$;
see at the end of this section for a sketch).

\smallskip 

With Proposition~\ref{propA2} at hand, the proof of the macroscopic self-similar behavior \eqref{13.1} in Theorem~\ref{theo13b} 
proceeds as follows. Denote by $V(x,t)=(T-t)^{-1}U(|x|/\sqrt{T-t})$ the self-similar solution of profile $U$.
For given, suitably small $x$, Proposition~\ref{propA2}  applied on a backward parabola $|x|=K\sqrt{T-t}$ 
with $K$ large, guarantees that $u/V$ is close to~$1$
up to the ``local time'' $t_0(x)=T-K^{-2}|x|^2$.
To propagate this information until the blow-up time, one combines the following observations:

\goodbreak

\begin{itemize}
\parskip=1pt
 \itemsep=1pt
 
\item[(a)]The remaining time interval $(t_0(x),T)$ has length $K^{-2}|x|^2\ll |x|^{-2}$

\item[(b)]$V$ is close to its known limiting profile $L|x|^{-2}$ on the interval $(t_0(x),T)$

\item[(c)]The time derivative enjoys a control of the form $|u_t|\le C|x|^{-4}$ near the singularity  (Proposition~\ref{lemdecaywt}).
The proof of this estimate relies on a nontrivial 
boostrap procedure. 
 \end{itemize}
 \noindent  Once \eqref{13.1} is proved,   \eqref{globalasympt} and \eqref{finalblow-up} can be derived as consequences of the
 properties of the profile~$U$.
 We note that the above strategy is inspired from an argument in \cite{MM09},
used there to determine the final space profile of blow-up
for the supercritical Fujita equation.
 The proof of the corresponding estimate for $u_t$ was shown in \cite{MM09} by an energy method.
We stress that such an argument cannot be implemented here due to the lack of variational structure.

\begin{rem} \label{remBZ}
Although both methods use zero number (ours does so through Proposition~\ref{propA2}), 
the argument used in \cite{BZ} to show
 that either \eqref{finalblow-up} holds or $\lim_{x\to 0}|x|^2u(x,T)=0$ is quite different.
It is based on the construction of partial self-similar solutions,
defined on the exterior of a parabola $|x|\ge c_0\sqrt{T-t}$, and on intersection-comparison with these solutions
(which requires assumption \eqref{hypBZ} to prevent zeros coming from infinity).
Although it does not rule out the possibility of a zero limit nor provides information on the space-time blow-up behavior of~$u$,
as an advantage, it can apply to type II solutions.
\end{rem}

\runinsubsection{Theorem~\ref{theo14}.}
{\hskip -3mm}We first show that the averaged mass function satisfies
\be{estimwm2}
w(x,t)\ge  \ts\frac12 w(0,t)
\ee
 on a generalized ``parabola'' $|x|\le cw^{-1/2}(0,t)$ for some $c>0$.
This is done by using the estimate $|w_r|\le Cw^{3/2}(0,t)$, which is a consequence of a known energy argument.

For given, suitably small $x$, this guarantees \eqref{estimwm2} for $t\le t_1(x)$, where 
the new local time $t_1(x)$ is defined by the implicit relation $|x|^2=cw^{-1}(0,t_1(x))$.
On the other hand, since assumption  \eqref{i2} ensures the monotonicity property $w_t\ge 0$, we have
$w(x,t)\ge w(x,t_1(x))= C|x|^{-2}$ on $(t_1(x),T)$, and it follows that
$w(x,t)\ge C\big(w^{-1}(0,t)+|x|^2)^{-1}$ in either time interval.
This estimate can finally transferred to the original function $u$ by an averaging procedure (Lemma~\ref{lem-monot2}),
making also use of the analogous upper estimate on $w$ from \cite{Soup-Win}.

\runinsubsection{Global lower space-time estimate.}
{\hskip -3mm}A variant of the argument of proof of Theorem~\ref{theo14} provides a direct proof (see after Theorem~\ref{theo13b2}) of the lower part of the estimate
\be{twosidedintro}
\frac{C_1}{T-t+|x|^2}\le u(x,t) \le \frac{C_2}{T-t+|x|^2}
\ee
for type I solutions. Estimate \eqref{twosidedintro} gives a qualitatively precise global description of the space-time blow-up behavior
 (the upper part was established in \cite{Soup-Win}), and
 is one of the conclusions of Theorems~\ref{theo13}-\ref{theo13b}, 
whose proof depends, among other things, on the microscopic convergence result in Proposition~\ref{propA2}.
The proof of the latter relies on intersection-comparison arguments
which are delicate and non robust, requiring the exact form of the equation. 
Although \eqref{twosidedintro} is less precise than Theorems~\ref{theo13}-\ref{theo13b},
it is thus interesting to have an alternative (and simpler) approach for such lower estimate,
which might be useful in different chemotaxis problems.

Namely, for type I solutions, replacing the energy argument by parabolic regularity applied on the equation
transformed by similarity variables (cf.~\eqref{VraieE0}), we obtain \eqref{estimwm2}  
up to the local time $t_2(x)=T-k|x|^2$ for some $k>0$. 
Since, unlike in the proof of Theorem~\ref{theo13b} based on Proposition~\ref{propA2}, 
the constant~$k$ cannot be taken arbitrarily small, the time-derivative control does not allow to propagate the estimate to
the remaining time interval $(t_2(x),T)$.
But the latter can be done by a suitable, local comparison argument on the equation for $w$,
and the estimate for $w$ is finally transferred to the original function $u$ by the above averaging procedure.

We note that this proof (as well as that of the upper estimate in \cite{Soup-Win}, based on elaborate auxiliary functionals), 
is actually independent of the existence and properties of self-similar solutions.

\runinsubsection{Proposition~\ref{propA1}.}
{\hskip -3mm}It was proved in \cite{MS2} in the case $\Omega=\R^n$ 
by means of braid group theory. An alternative, purely analytic proof,
making use of intersection-comparison techniques from~\cite{MM04} for Fujita type equations,
was also given in \cite{MS2},
but under the additional assumption $u_0(x)\le C(1+|x|)^{-1}$.
As for the case $\Omega=B_R$, Proposition~\ref{propA1} was proved in \cite{GP} 
under restrictive assumptions involving intersection properties of~$u_0$.
In Section~\ref{SecTypeI}, we shall provide a significantly simpler and purely analytic proof covering general $u_0$ in both cases,
relying on intersection-comparison arguments from \cite{CFG}
(see also \cite[Theorem 23.10]{quittner_souplet}),
used there for Fujita type equations in a ball and
simplifying those in~\cite{MM04}. 
To cover the case $\Omega=\R^n$, combined with these techniques, we also use 
backward uniqueness for linear parabolic equations like in \cite{MS2}.
However, owing to an additional argument using the decay of any radial steady state 
(see the proof of Lemma~\ref{lemA0} and Remark~\ref{remBwdU}),
backward uniqueness  is required only at the level of the equation for the averaged mass function $w$,
and not on the equation for $w_t$,
which enables us to avoid decay assumptions on~$u_0$.

\runinsubsection{Proposition~\ref{propA2}.}
{\hskip -3mm}It is proved by a dynamical systems argument divided in three steps.

\vskip 2pt

(a) First, as in \cite{GK, GMS}, one rewrites $w$ in similarity variables, setting:
$$\phi (y,s)=(T-t)w\big(y\sqrt{T-t},t\big),\quad y=\ts\frac{r}{\sqrt{T-t}},\ s=-\log (T-t).$$
The new function $\phi$ becomes a global solution of a modified parabolic equation (cf.~\eqref{VraieE0}),
whose steady-states are precisely the self-similar profiles $\Psi$
(i.e.,~solutions of \eqref{eqnPsi}),
and the type I assumption guarantees that $\phi$ is bounded.

\vskip 2pt

(b) Next, one derives a non-oscillation property at the origin, namely $\phi(0,s)$ has a limit as $s\to\infty$.
For this we follow the argument of \cite{GMS}, based on intersection-comparison with self-similar solutions,
making use of asymptotic properties of their profiles.

\vskip 2pt

(c) A key last step is to show that the $\omega$-limit set $\omega(\phi)$ consists of steady states
(which will conclude the proof, since is $\omega(\phi)$ will then be a singleton in view of Step~(b)).
In \cite{GMS} this is done by using delicate space-analyticity estimates for parabolic equations
with analytic nonlinearities.
Here we give a significantly simpler proof, relying instead on a 
nondegeneracy property (see Lemma~\ref{lemA5} and the paragraph after \eqref{limeqn})
for radial solutions of linear parabolic equations
(proved by a zero-number argument), 
applied to the equation for $\phi_s$.
 
\bigskip

The outline of the rest of the paper is as follows. 
Section~\ref{secprelim} gives some preliminaries on local well-posedness and on the averaged mass function.
Theorems~\ref{theo13}-\ref{theo13b} are proved in Section~\ref{secmacro}.
Section~\ref{secmacro2} is devoted to the proof of Theorem~\ref{theo14} and to the simpler direct proof of 
the space-time lower bound.
Propositions~\ref{propA1} and \ref{propA2} on type I blow-up
and local convergence in similarity variables are proved in Sections~\ref{sectype1} and \ref{SecLocalCv}, respectively.

\goodbreak

\mysection{Preliminaries: local well-posedness and averaged mass}
\label{secprelim}

Recalling definition \eqref{w}, hence
$w(r,t) =\int_0^1 u(r\sigma,t)\,d\sigma$, we see that
$w\in C^{2,1}([0,R)\times(0,T))$,
and direct calculation (see, e.g.,~\cite[Proposition~3.2]{Soup-Win}) shows that $w$ is a classical solution of
\be{e1}
        \left\{ 
        \begin{array}{lcll}
    	\hfill w_t - w_{rr}-\frac{n+1}{r}w_r&=&(nw+rw_r)w, 
	    	& r\in (0,R), \ t\in (0,T), \\ [1mm]
    	\hfill w_r&=&0,
    	& r=0,\ t\in (0,T), \\ [1mm]
	    	\hfill w&=&\mu, 
    	& r=R,\ t\in (0,T)\quad\hbox{(if $R<\infty$)},
	 \end{array}
	\right.
\ee
where $\mu=R^{-n}\|u_0\|_1$, 
and the last condition follows from the mass conservation property for $u$
(cf.~Proposition~\ref{locprop}(iii) below).
On the other hand, setting $\tilde\Omega=\R^{n+2}$ or 
$\tilde\Omega=\{\tilde x\in\R^{n+2};\ |\tilde x|\le R\}$, and writing
$w(\tilde x,t)=w(|\tilde x|,t)$, for $(\tilde x,t)\in \tilde\Omega\times (0,T)$, 
we can also treat $w$ as a solution of 
\be{highereq}
        \left\{ 
        \begin{array}{lcll}
    	\hfill w_t-\tilde\Delta w&=&(nw+\tilde x\cdot\tilde\nabla w)w \equiv u w
    	& \hbox{ in $\tilde\Omega \times (0,T)$}, \\ [1mm]
    	    	\hfill w&=&\mu, 
    	& \hbox{ on $\partial\tilde\Omega \times (0,T)$},
	 \end{array}
	\right.
\ee
where  $\tilde\Delta$, $\tilde\nabla$ are respectively the Laplacian and the spatial gradient in $n+2$ space variables.

The solution of \eqref{0} considered in this paper is given by the 
following local existence-uniqueness result (where $S(t)$ is the heat semigroup on $L^\infty(\R^n)$).

\begin{prop} \label{locprop}
Let  $n\ge 2$ and consider problem \eqref{0} with $\Omega=\R^n$ or $\Omega=B_R$, where $u_0\in L^\infty(\Omega)$, $u_0\ge 0$, 
is radially symmetric.
\begin{itemize}
\parskip=1pt
 \itemsep=1pt
\item[(i)]
There exists $\tau>0$ and a unique, classical solution $(u,v)$ of \eqref{0} such that
 $(u,v)\in BC^{2,1}(\overline\Omega\times (0,\tau))\times BC^{2,0}(\overline\Omega\times (0,\tau))$,
 $u\in L^\infty_{loc}([0,T);L^\infty(\Omega))$,
 and the initial data is taken in the sense
$\lim_{t\to 0}\|u(t)-u_0\|_q=0$ for all $q\in[1,\infty)$ if $\Omega=B_R$, 
 or $\lim_{t\to 0}\|u(t)-S(t)u_0\|_\infty=0$ if $\Omega=\R^n$.
  Moreover, $(u,v)$ can be extended to a unique maximal solution,
  whose existence time $T=T(u_0,v_0)\in (0,\infty]$ satisfies
  \be{ext}
	\mbox{either \ $T<\infty$ \ or \ $\displaystyle\lim_{t\to T} \|u(\cdot,t)\|_\infty = \infty$.}
  \ee
Also, the function $w$ in \eqref{w} solves \eqref{e1} and $u, w$
are radially symmetric.

\item[(ii)] If $u_0$ is nonincreasing with respect to $|x|$, then 
\be{e1wr}
u_r, w_r\le 0.
\ee

 \item[(iii)] Assume that $u_0\in L^1(\R^n)$ in case $\Omega=\R^n$. Then $u$ enjoys the mass conservation property
 \be{mass}
\|u(t)\|_{L^1(\Omega)}=\|u_0\|_{L^1(\Omega)},\quad 0<t<T.
\ee
\end{itemize}
\end{prop}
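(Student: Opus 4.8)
\emph{Proof plan.} Parts (i) and (iii) follow along standard lines, while part (ii) requires a genuine argument; below is the route I would take.

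\emph{Part (i).} The plan is to reduce \eqref{0} to a single equation for $u$ and solve it by a contraction argument. For radial $u\in L^\infty(\Omega)$, the Poisson problem $-\Delta v=u$ (with $v=0$ on $\partial\Omega$, resp.\ the radial Newtonian potential if $\Omega=\R^n$) has a unique radial solution satisfying $r^{n-1}v_r(r)=-\int_0^r s^{n-1}u(s)\,ds$, i.e.\ $v_r=-rw$, and by elliptic $L^q$ estimates $v\in W^{2,q}_{\mathrm{loc}}$ for all $q<\infty$, hence $v\in C^{1,\alpha}_{\mathrm{loc}}$. Substituting $\Delta v=-u$ turns the first equation into $u_t=\Delta u-v_ru_r+u^2$ with the no-flux boundary condition, and I would solve its Duhamel form $u(t)=e^{t\Delta}u_0+\int_0^t e^{(t-s)\Delta}\bigl(u^2-v_ru_r\bigr)(s)\,ds$ (Neumann heat semigroup when $\Omega=B_R$) in a ball of $C([0,\tau];L^\infty)$, the smoothing bound $\|\nabla e^{\sigma\Delta}f\|_\infty\le C\sigma^{-1/2}\|f\|_\infty$ making the map a contraction for small $\tau$. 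Then: $u\ge0$ by the comparison principle for this scalar equation; parabolic $L^p$ and Schauder estimates give the stated classical regularity; uniqueness is built into the contraction; the maximal solution and the dichotomy \eqref{ext} follow by the usual continuation argument; radial symmetry follows from uniqueness and rotation-invariance; and that $w$ solves \eqref{e1} is the direct computation of \cite[Proposition~3.2]{Soup-Win}, with $w(R,t)=\mu$ equivalent to mass conservation. The one delicate point is $\Omega=\R^n$: since $v_r=-rw$ grows at infinity, the fixed point is better set up for $w$ itself --- via the $(n+2)$-dimensional form \eqref{highereq} --- in a uniformly local or weighted framework, the pointwise bound $0\le w\le n^{-1}\|u_0\|_\infty$ providing the needed a priori control; $u$ is then recovered from \eqref{reluw}.

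\emph{Part (ii).} Here I would run a maximum-principle argument for $p:=u_r$. Differentiating $u_t=\Delta u-v_ru_r+u^2$ in $r$ and eliminating $v_{rr}$ via $v_{rr}=-u-\tfrac{n-1}{r}v_r$ (and using that $v_r=-rw$, so $v_r$ and $v_r/r$ remain bounded), one checks that $p$ solves a linear radial parabolic equation whose only singular terms are the favorable potential $-\tfrac{n-1}{r^2}p$ and the usual radial drift $\tfrac{n-1}{r}p_r$, all other coefficients being bounded on $[0,\tau]\times[0,R]$. The sign of $p$ is then preserved because: the bounded zeroth-order term is absorbed by $\tilde p:=e^{-\lambda t}p$; $p(0,t)=u_r(0,t)=0$ by smoothness of the radial solution; on $r=R$ the no-flux condition gives $u_r(R,t)=u(R,t)v_r(R,t)=-R\,u(R,t)\,w(R,t)\le0$; and $u_r\le0$ at $t=0$ by \eqref{i0}. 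The maximum principle then yields $u_r\le0$ on $[0,\tau]$, hence on $[0,T)$ by iteration, and finally $w_r(r,t)=\int_0^1\sigma\,u_r(r\sigma,t)\,d\sigma\le0$. I expect this to be the main obstacle: a naive maximum principle applied to $u_r$ fails because the quadratic nonlinearity generates a genuinely indefinite zeroth-order coefficient, so one must exploit the short-time boundedness of the coefficients, the favorable singular term, and above all the sign carried by the boundary flux and by $u,w\ge0$; the singular drift near $r=0$ also calls for a routine but slightly delicate barrier/limiting argument on annuli $\{\varepsilon<r<R\}$.

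\emph{Part (iii).} For $\Omega=B_R$, integrating the first equation in \eqref{0} over $\Omega$ and using the no-flux boundary condition gives $\tfrac{d}{dt}\int_\Omega u=\int_{\partial\Omega}(\partial_\nu u-u\,\partial_\nu v)=0$, which is \eqref{mass}. For $\Omega=\R^n$ with $u_0\in L^1(\R^n)$, I would run the same computation against a cutoff $\chi_\rho(x)=\chi(x/\rho)$ and check that the error terms $\int(u\,\Delta\chi_\rho-u\,\nabla v\cdot\nabla\chi_\rho)\,dx$ vanish as $\rho\to\infty$ uniformly on compact time intervals, using $L^1$--$L^\infty$ smoothing for $u$ and the radial control $|\nabla v|=|v_r|=rw$ on the shell $\{\rho\le|x|\le2\rho\}$; equivalently, $\|u(t)\|_1$ is a fixed multiple of $\lim_{r\to\infty}r^nw(r,t)$, and this limit is time-independent by \eqref{e1}.
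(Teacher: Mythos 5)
Parts (i) and (iii) follow the same lines as the paper (contraction/Duhamel for local well-posedness, cutoff or variation-of-constants for mass conservation), and for the maximum-principle kernel of part (ii) your equation for $p=u_r$, the boundary sign $u_r(R,t)=u v_r\le 0$, and the handling of the radial singularity all match the paper's Case~1. But there is a genuine gap: your argument tacitly assumes that $u_r$ is continuous up to $t=0$ and that $u_r(\cdot,0)\le 0$, which you justify "by \eqref{i0}". That does not hold at the stated level of generality: $u_0$ is only $L^\infty$, radially nonincreasing, so $u_{0,r}$ is not a function, and the trace of $u_r$ at $t=0$ is not available. Starting the maximum principle from $t=0$ therefore fails for rough data. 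The paper flags this precisely ("delicate\dots without additional regularity on the initial data") and resolves it with a two-stage argument that your plan omits entirely: first prove $u_r,w_r\le 0$ for smooth compactly supported approximants $u_{0,j}$ with $\partial_r u_{0,j}\le 0$ (this is your Case~1-type maximum principle, but the paper actually runs it first on $w_r$ and only then on $u_r$, because the $w_r$-equation has more tractable coefficients via $|rw_r|=|nw-u|\le C$, and the resulting $w_r\le 0$ is used to derive the decay of $u_r$ at infinity that the $\R^n$ maximum principle requires); then pass to the limit $j\to\infty$, proving locally uniform convergence of $w_j$ to some $W$ solving \eqref{e1}, identifying the initial trace of $W$ as $w_0$, and establishing uniqueness $w=W$ (with a logarithmic penalization when $\Omega=\R^n$). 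None of these steps is routine, and without them the sign of $u_r$ is not carried from the data to $t>0$.

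A second, smaller omission on $\Omega=\R^n$: to apply the maximum principle to $z=u_r$ on an unbounded domain you must control the behavior of $z$ at spatial infinity, not just near $r=0$. The paper proves $u_r(r,t)\to 0$ as $r\to\infty$ uniformly on compact time intervals (its estimate \eqref{decayur}, obtained from $w(t)\le C(1+|x|)^{-n}$ and parabolic interior estimates in balls $B_R(2Rh)$), and this decay is what legitimizes the comparison. Your mention of annuli $\{\varepsilon<r<R\}$ addresses only the inner singularity; you would also need a quantitative decay of $u_r$ at infinity (or a weighted barrier) for the argument to close.
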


See the appendix for a proof.
The local existence-uniqueness part is essentially known. 
As for assertion (ii) since a complete proof seems to be missing in the literature,
and is somehow delicate especially in the case $\Omega=\R^n$
or without additional regularity on the initial data, we provide one.

\section{Sharp macroscopic blow-up behavior: Proof of Theorems~\ref{theo13}-\ref{theo13b}}
\label{secmacro}

For the proof of Theorem~\ref{theo13b}, as explained in Section~\ref{ideas},
we need the following estimate of $|u_t|$ near the singularity. 

\begin{prop}\label{lemdecaywt} 
Let  $n\ge 3$ and consider problem \eqref{0} with $\Omega=\R^n$ or $\Omega=B_R$, where $u_0\in L^\infty(\Omega)$, $u_0\ge 0$, 
is radially symmetric.
Assume that
\be{hyp-lemdecaywt} 
u(x,t)\le C_1|x|^{-2}\quad\hbox{in $(B_\sigma\setminus\{0\})\times(0,T)$,}
\ee
for some $C_1,\sigma>0$.  
Then there exist $\eta,C_2>0$ such that 
\be{decaywt}
|u_t(x,t)| \le C_2|x|^{-4}\quad \hbox{in } (B_\eta\setminus\{0\})\times (T/2,T).
\ee
\end{prop}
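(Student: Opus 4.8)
The plan is to derive the pointwise bound on $u_t$ by a parabolic rescaling (zoom-in) argument combined with interior parabolic estimates, using the hypothesis \eqref{hyp-lemdecaywt} to control $u$ on suitable space-time cylinders. Fix a point $x_0$ with $0<|x_0|=:d\le\eta$ (with $\eta$ small to be chosen) and a time $t_0\in(T/2,T)$; it suffices to bound $u_t(x_0,t_0)$. The natural scale near $x_0$ is $d$: introduce the rescaled functions
\be{rescaledef}
\tilde u(y,\tau)=d^2\,u(x_0+dy,\,t_0+d^2\tau),\qquad
\tilde v(y,\tau)=v(x_0+dy,\,t_0+d^2\tau),
\ee
for $(y,\tau)$ ranging over, say, $B_{1/2}\times(-\tfrac18,0]$, which under $x\mapsto x_0+dy$ corresponds to a space-time region contained in $(B_\sigma\setminus\{0\})\times(T/2,T)$ provided $\eta\le\sigma/4$, since there $|x|\ge d/2$ so $|x|\notin\{0\}$ and $t\le t_0<T$. (One must also check $t_0+d^2\tau\ge T/4$, say, which holds for $\eta$ small since $t_0\ge T/2$ and $d^2\tau\ge -\tfrac18\eta^2$.) By \eqref{hyp-lemdecaywt}, on this region $u\le C_1|x|^{-2}\le C_1(d/2)^{-2}=4C_1 d^{-2}$, hence $\tilde u\le 4C_1$ on $B_{1/2}\times(-\tfrac18,0]$, uniformly in $x_0,t_0$.

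The rescaled pair satisfies the same system: $\tilde u_\tau=\Delta_y\tilde u-\nabla_y\cdot(\tilde u\,\nabla_y\tilde v)$ and $0=\Delta_y\tilde v+\tilde u$ on the same cylinder. Since $\tilde u$ is bounded there, elliptic $L^p$ and then Schauder/Sobolev estimates for the Poisson equation give a bound on $\nabla_y\tilde v$ in, say, $C^\alpha$ on a slightly smaller cylinder (interior elliptic estimates in the $y$ variable, using that $\tilde v$ is bounded by a constant times $\|\tilde u\|_\infty$ plus harmonic correction controlled by $\tilde v$'s boundedness — here one uses that $v$, being the Newtonian-type potential of $u$ with $u\le C_1|x|^{-2}$ and $u$ radially decreasing, is itself bounded on $B_\sigma$, so $\tilde v$ is bounded). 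Writing the $\tilde u$-equation as $\tilde u_\tau-\Delta_y\tilde u=-\nabla_y\tilde u\cdot\nabla_y\tilde v-\tilde u\,\tilde v$ (using $\Delta_y\tilde v=-\tilde u$), we have a linear parabolic equation for $\tilde u$ with bounded lower-order coefficients and bounded right-hand side; interior parabolic $L^p$ estimates then bootstrap to $W^{2,1}_p$ and Hölder bounds for $\tilde u$, and a further round upgrades $\nabla_y\tilde v$ and then $\tilde u_\tau$ itself to be bounded on $B_{1/4}\times(-\tfrac1{16},0]$, uniformly in $x_0,t_0$. In particular $|\tilde u_\tau(0,0)|\le C$ for an absolute constant $C=C(n,C_1,\|v\|_{L^\infty(B_\sigma\times(T/2,T))})$. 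Undoing the scaling, $\tilde u_\tau(0,0)=d^4\,u_t(x_0,t_0)$, hence $|u_t(x_0,t_0)|\le C d^{-4}=C|x_0|^{-4}$, which is \eqref{decaywt}.

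The main obstacle — and the reason the authors flag this proof as relying on "a nontrivial bootstrap procedure" — is obtaining the uniform control of $\nabla v$ (equivalently $\tilde v$ and $\nabla_y\tilde v$) on the rescaled cylinders solely from $u\le C_1|x|^{-2}$, since the convection term $\nabla\cdot(u\nabla v)$ is critical: a crude bound $u\le 4C_1 d^{-2}$ only gives, via the Poisson equation on a ball of radius $\sim d$, that $\nabla_y\tilde v$ is bounded, which is exactly borderline for the parabolic estimates to close. One should exploit the radial structure and monotonicity: from $w_r\le 0$ and $u=nw+rw_r\le nw$ together with $u\le C_1|x|^{-2}$ one controls $v_r$ directly, since $v_r(r)=-r\,w(r)$ (indeed $-\Delta v=u$ radially gives $(r^{n-1}v_r)_r=-r^{n-1}u$, so $v_r=-r^{1-n}\int_0^r u s^{n-1}ds=-r w$), whence $|v_r(r)|=r\,w(r)\le C_1 r^{-1}$ and $|\nabla_y\tilde v(y,\tau)|=d^{-1}|v_r|\le C_1 d^{-1}(d/2)^{-1}\cdot d= O(1)$ — giving the needed uniform $C^0$ bound on $\nabla_y\tilde v$ cleanly, and $C^\alpha$ bounds follow from differentiating once more and using \eqref{hyp-lemdecaywt} again. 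With this explicit gradient estimate in hand, the parabolic bootstrap for $\tilde u$ and then for $\tilde u_\tau$ proceeds by the standard scheme (interior $L^p$ theory, Sobolev embedding, then repeat on the equations for the derivatives), and no variational structure is needed. A minor technical point is handling time slices near $\tau=0$: since $t_0<T$ and we only use $\tau\le 0$, the solution is smooth on the whole rescaled cylinder, so there is no issue at the blow-up time.
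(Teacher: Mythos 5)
Your proposal is correct and is essentially the paper's own argument in disguised form: the paper's Lemmas~\ref{lemdecaywt00}--\ref{lemdecaywt0} are precisely the rescaling $\tilde z(y,s)=z(\xi+\rho y,\tau+\rho^2 s)$ that you perform, the crucial ingredient in both is the bound $|\nabla v|\le C|x|^{-1}$ obtained from the radial Poisson equation (your $v_r=-rw$), and both then run the same two-round bootstrap ($L^q$/Sobolev to get H\"older control on $u$ and $\nabla v$, then Schauder to bound $u_t$). A few slips to fix but none affects the substance: the transformed equation should read $\tilde u_\tau-\Delta_y\tilde u+\nabla_y\tilde v\cdot\nabla_y\tilde u=\tilde u^2$ (not $-\tilde u\,\tilde v$, since $-u\Delta v=u^2$); the assertion that $v$ itself is bounded on $B_\sigma$ is false in general (with $u\sim|x|^{-2}$, $v$ has a logarithmic singularity at the origin), though this is immaterial because only $\nabla v$ enters and your direct radial estimate handles it; and the scaling factor should be $|\nabla_y\tilde v|=d\,|\nabla_x v|$, not $d^{-1}|v_r|$.
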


To prove Proposition~\ref{lemdecaywt}, we use a bootstrap argument on the elliptic and parabolic parts of~\eqref{0}, along with 
the following 
scaled parabolic imbedding and scaled versions of the $L^q$ and Schauder interior parabolic regularity estimates.
In what follows, for given $(\xi,\tau)\in\R^n\times\R$, we set $Q_\rho:=B_\rho(\xi)\times (\tau-\rho^2,\tau)$, 
and denote the 
{parabolic H\"older bracket by
$$[f]_{\alpha,Q_\rho}:=\ds\sup_{(x,t)\ne (x',t')\in Q_\rho} \ts\frac{|f(x,t)-f(x',t')|}{|x-x'|^\alpha+|t-t'|^{\alpha/2}}.$$

\begin{lem}\label{lemdecaywt00} 
Let  $\alpha\in(0,1)$, $\max\big(n+2,\frac{n+2}{2(1-\alpha)}\big)<q<\infty$,  
$(\xi,\tau)\in\R^n\times\R$ and $\rho>0$. 
There exists a constant $C_0>0$ depending only on $n,q,\alpha$ such that, 
for all $z\in W^{2,1;q}(Q_\rho)$, 
\be{estimat1.3}
\rho^{\alpha} [z]_{\alpha,Q_\rho}+\rho \|\nabla z\|_{L^\infty(Q_\rho)}
\le C_0\rho^{-(n+2)/q} \left(\rho^2\|D^2z\|_{L^q(Q_\rho)} + \rho^2\|z_t\|_{L^q(Q_\rho)}+\|z\|_{L^q(Q_\rho)}\right).
\ee
\end{lem}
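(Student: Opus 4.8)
The plan is to prove Lemma~\ref{lemdecaywt00} by combining standard interior $L^q$ parabolic regularity, the parabolic Sobolev embedding, and a scaling (dilation) argument that tracks the powers of $\rho$ explicitly. First I would reduce to the case of a unit-scale cylinder. Fix a reference cylinder $Q_1^0:=B_1(0)\times(-1,0)$ and, given $z\in W^{2,1;q}(Q_\rho)$ with $Q_\rho=B_\rho(\xi)\times(\tau-\rho^2,\tau)$, define the rescaled function
\[
\tilde z(y,s):=z(\xi+\rho y,\ \tau+\rho^2 s),\qquad (y,s)\in Q_1^0.
\]
This is the natural parabolic dilation: it maps $Q_\rho$ onto $Q_1^0$, and it is exactly the scaling under which the heat operator is invariant, so $\tilde z_s-\Delta\tilde z$ corresponds to $\rho^2(z_t-\Delta z)$ evaluated at the scaled point. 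One then records how each of the six quantities in \eqref{estimat1.3} transforms: $[\tilde z]_{\alpha,Q_1^0}=\rho^\alpha[z]_{\alpha,Q_\rho}$, $\|\nabla\tilde z\|_{L^\infty(Q_1^0)}=\rho\|\nabla z\|_{L^\infty(Q_\rho)}$, while for the $L^q$ norms the change of variables contributes a Jacobian factor $\rho^{n+2}$, giving $\|D^2\tilde z\|_{L^q(Q_1^0)}=\rho^{2-(n+2)/q}\|D^2z\|_{L^q(Q_\rho)}$, $\|\tilde z_s\|_{L^q(Q_1^0)}=\rho^{2-(n+2)/q}\|z_t\|_{L^q(Q_\rho)}$, and $\|\tilde z\|_{L^q(Q_1^0)}=\rho^{-(n+2)/q}\|z\|_{L^q(Q_\rho)}$. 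Thus the claimed inequality \eqref{estimat1.3} is \emph{equivalent} to its $\rho=1$ version, and it suffices to prove
\[
[\tilde z]_{\alpha,Q_1^0}+\|\nabla\tilde z\|_{L^\infty(Q_1^0)}\ \le\ C_0\bigl(\|D^2\tilde z\|_{L^q(Q_1^0)}+\|\tilde z_s\|_{L^q(Q_1^0)}+\|\tilde z\|_{L^q(Q_1^0)}\bigr)
\]
for all $\tilde z\in W^{2,1;q}(Q_1^0)$, with $C_0=C_0(n,q,\alpha)$.

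For the unit-scale estimate, the key input is the anisotropic (parabolic) Sobolev embedding theorem: for $q>n+2$, the space $W^{2,1;q}$ embeds continuously into the parabolic Hölder space $C^{1+\beta,(1+\beta)/2}$ for $\beta=1-(n+2)/q$, and more generally, for $q>\frac{n+2}{2(1-\alpha)}$ — equivalently $2-(n+2)/q>\alpha$ — one has $W^{2,1;q}\hookrightarrow C^{\alpha,\alpha/2}$ together with a gradient bound, on a fixed cylinder. Concretely, the two hypotheses $q>n+2$ and $q>\frac{n+2}{2(1-\alpha)}$ ensure respectively that $\nabla\tilde z$ is bounded (this needs $2-(n+2)/q>1$, i.e.\ $q>n+2$) and that the Hölder seminorm $[\tilde z]_{\alpha}$ is controlled (this needs $2-(n+2)/q>\alpha$). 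Both follow from the standard parabolic embedding results, which I would cite from Ladyzhenskaya–Solonnikov–Ural'tseva or Lieberman; the norm on the right-hand side of the embedding, $\|\tilde z\|_{W^{2,1;q}(Q_1^0)}=\|D^2\tilde z\|_{L^q}+\|\tilde z_s\|_{L^q}+\|\nabla\tilde z\|_{L^q}+\|\tilde z\|_{L^q}$, is then dominated by the three-term right-hand side above after absorbing $\|\nabla\tilde z\|_{L^q}$ via interpolation (or simply noting that the first-order term is controlled by the zeroth and second order ones by an elementary interpolation inequality on the fixed bounded cylinder).

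I would then note that for the purposes of applying this to \eqref{0} in Proposition~\ref{lemdecaywt} one does not even need a full interior Schauder step inside the present lemma — the lemma as stated is purely an embedding-plus-scaling statement and contains no PDE. So the proof is genuinely short: state the scaling, verify the six scaling identities, invoke the parabolic embedding on the unit cylinder. The only mildly delicate point — and the one I would be most careful about — is bookkeeping the exponents: one must check that the hypotheses $q>n+2$ and $q>\frac{n+2}{2(1-\alpha)}$ are precisely what make the two target quantities ($\|\nabla\tilde z\|_\infty$ and $[\tilde z]_\alpha$) fall inside the embedding range, and that the Jacobian power $(n+2)/q$ appearing with a minus sign in front of the right-hand side of \eqref{estimat1.3} matches the $L^q$ scaling exactly. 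There is no real obstacle beyond this careful matching of powers; the ``hard part'' is purely notational vigilance rather than any analytic difficulty.
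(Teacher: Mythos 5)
Your proof is correct and follows essentially the same route as the paper's one-line proof: the parabolic dilation to the unit cylinder, the standard parabolic Sobolev embedding cited from \cite{LSU}, and scaling back, with exactly the same bookkeeping of powers of $\rho$. One small aside: the hypothesis $q>\frac{n+2}{2(1-\alpha)}$ translates to $2-(n+2)/q>2\alpha$, not $>\alpha$ as you wrote; since the former is stronger, this does not affect the validity of the argument.
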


\begin{lem}\label{lemdecaywt0} 
Let $\alpha\in(0,1)$, $q\in(1,\infty)$, $(\xi,\tau)\in\R^n\times\R$, $\rho, A>0$,  
$b\in L^\infty(Q_\rho)$, and assume $\|b\|_{L^\infty(Q_\rho)}\le A\rho^{-1}$.
There exists a constant $C_A>0$ depending only on $A,n,q,\alpha$ with the following properties. 
\smallskip

(i) If $z$ is a strong solution of 
\be{neweqnu}
z_t-\Delta z+b\cdot\nabla z=f\quad \hbox{in } \ Q_\rho,
\ee
then
\be{estimat1.1}
\|D^2z\|_{L^q(Q_{\rho/2})} + \|z_t\|_{L^q(Q_{\rho/2})}
\le C_A\Big(\|f\|_{L^q({Q_\rho})}+\rho^{-2}\|z\|_{L^q(Q_\rho)}\Big).
\ee

(ii)
If $z$ is a classical solution of \eqref{neweqnu} and $[b]_{\alpha,Q_\rho}\le A\rho^{-1-\alpha}$, then 
\be{estimat1.2}
\begin{aligned}
	\rho^{\alpha} [D^2z]_{\alpha,Q_{\rho/2}} +\rho^{\alpha} [z_t]_{\alpha,Q_{\rho/2}}
	&+\|D^2z\|_{L^\infty(Q_{\rho/2})} + \|z_t\|_{L^\infty(Q_{\rho/2})} \\
	&\le C_A\Big(\rho^{\alpha}[f]_{\alpha,Q_\rho}+\|f\|_{L^\infty(Q_\rho)}+\rho^{-2}\|z\|_{L^\infty(Q_\rho)}\Big).
\end{aligned}
\ee
\end{lem}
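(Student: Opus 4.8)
The plan is to recognize \eqref{estimat1.1}--\eqref{estimat1.2} as the scaled, dimensionally homogeneous forms of the classical interior $L^q$ (Calder\'on--Zygmund) and Schauder parabolic regularity estimates, and to reduce everything to the unit scale $\rho=1$ by parabolic rescaling. First I would set, for $(y,s)\in Q_1:=B_1(0)\times(-1,0)$,
$$\tilde z(y,s)=z(\xi+\rho y,\tau+\rho^2 s),\quad \tilde b(y,s)=\rho\,b(\xi+\rho y,\tau+\rho^2 s),\quad \tilde f(y,s)=\rho^2 f(\xi+\rho y,\tau+\rho^2 s),$$
and check that $\tilde z_s-\Delta\tilde z+\tilde b\cdot\nabla\tilde z=\tilde f$ in $Q_1$, that the hypothesis $\|b\|_{L^\infty(Q_\rho)}\le A\rho^{-1}$ translates into $\|\tilde b\|_{L^\infty(Q_1)}\le A$, and that $[b]_{\alpha,Q_\rho}\le A\rho^{-1-\alpha}$ translates into $[\tilde b]_{\alpha,Q_1}\le A$. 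It then suffices to prove both estimates in the case $\rho=1$, $\xi=0$, $\tau=0$, with constants depending only on $A,n,q,\alpha$, and to undo the scaling at the end.

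At unit scale I would treat the convection term as a controlled lower-order perturbation. For (i), writing $g:=\tilde z_s-\Delta\tilde z=\tilde f-\tilde b\cdot\nabla\tilde z$ and applying the interior $W^{2,1;q}$ estimate for the heat operator on $Q_{3/4}\subset Q_1$,
$$\|D^2\tilde z\|_{L^q(Q_{3/4})}+\|\tilde z_s\|_{L^q(Q_{3/4})}\le C\bigl(\|\tilde f\|_{L^q(Q_1)}+A\|\nabla\tilde z\|_{L^q(Q_1)}+\|\tilde z\|_{L^q(Q_1)}\bigr),$$
and then absorbing the gradient term via the parabolic interpolation inequality $\|\nabla\tilde z\|_{L^q}\le\eps(\|D^2\tilde z\|_{L^q}+\|\tilde z_s\|_{L^q})+C_\eps\|\tilde z\|_{L^q}$ together with a standard iteration over a nested family of cylinders to handle the gap between the inner and outer domains (equivalently, one may quote directly the interior $W^{2,1;q}$ estimate for parabolic operators with bounded lower-order coefficients, whose constant depends only on the bound). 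Choosing $\eps$ small gives the $\rho=1$ form of \eqref{estimat1.1}. For (ii) I would run the same scheme with the classical interior Schauder estimate, bounding the convection term through $[\tilde b\cdot\nabla\tilde z]_{\alpha,Q_1}\le\|\tilde b\|_{L^\infty(Q_1)}[\nabla\tilde z]_{\alpha,Q_1}+[\tilde b]_{\alpha,Q_1}\|\nabla\tilde z\|_{L^\infty(Q_1)}$ and $\|\tilde b\cdot\nabla\tilde z\|_{L^\infty(Q_1)}\le A\|\nabla\tilde z\|_{L^\infty(Q_1)}$, together with the interpolation bound $[\nabla\tilde z]_{\alpha,Q_1}+\|\nabla\tilde z\|_{L^\infty(Q_1)}\le\eps\bigl([D^2\tilde z]_{\alpha,Q_1}+[\tilde z_s]_{\alpha,Q_1}+\|D^2\tilde z\|_{L^\infty(Q_1)}+\|\tilde z_s\|_{L^\infty(Q_1)}\bigr)+C_\eps\|\tilde z\|_{L^\infty(Q_1)}$ and the same absorption, to reach the $\rho=1$ form of \eqref{estimat1.2}.

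Finally I would undo the scaling, tracking the weights: for $r\in(0,1]$ one has $\|D^2_y\tilde z\|_{L^q(Q_r)}=\rho^{2-(n+2)/q}\|D^2z\|_{L^q(Q_{\rho r})}$, $\|\tilde z_s\|_{L^q(Q_r)}=\rho^{2-(n+2)/q}\|z_t\|_{L^q(Q_{\rho r})}$, $\|\tilde f\|_{L^q(Q_r)}=\rho^{2-(n+2)/q}\|f\|_{L^q(Q_{\rho r})}$ and $\|\tilde z\|_{L^q(Q_r)}=\rho^{-(n+2)/q}\|z\|_{L^q(Q_{\rho r})}$, so dividing the $\rho=1$ estimate by $\rho^{2-(n+2)/q}$ yields \eqref{estimat1.1}; likewise $[D^2_y\tilde z]_{\alpha,Q_r}=\rho^{2+\alpha}[D^2z]_{\alpha,Q_{\rho r}}$, $\|D^2_y\tilde z\|_{L^\infty(Q_r)}=\rho^2\|D^2z\|_{L^\infty(Q_{\rho r})}$, $[\tilde f]_{\alpha,Q_r}=\rho^{2+\alpha}[f]_{\alpha,Q_{\rho r}}$, $\|\tilde f\|_{L^\infty(Q_r)}=\rho^2\|f\|_{L^\infty(Q_{\rho r})}$ and $\|\tilde z\|_{L^\infty(Q_r)}=\|z\|_{L^\infty(Q_{\rho r})}$, and dividing the $\rho=1$ Schauder estimate by $\rho^2$ gives \eqref{estimat1.2}. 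The point requiring the most care is that the unit-scale constant must depend on $b$ only through $A$ and not otherwise: this is exactly what the absorption step (or the quoted version of the interior estimates allowing bounded lower-order coefficients) secures, so no compactness argument and no continuity hypothesis on $b$ beyond the stated bounds are needed. I would not expect any other serious obstacle; the remaining work is the routine bookkeeping of the interpolation/covering step and of the scaling exponents.
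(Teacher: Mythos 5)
Your proof is correct and takes essentially the same route as the paper: rescale $z,b,f$ to unit scale by the parabolic change of variables, note that the hypotheses give $\|\tilde b\|_{L^\infty(Q_1)}\le A$ and (for part (ii)) $[\tilde b]_{\alpha,Q_1}\le A$, invoke the standard interior $W^{2,1;q}$ and Schauder estimates for operators with bounded lower-order coefficients (the paper cites Lieberman and Quittner--Souplet at this point, where you sketch the interpolation/absorption argument behind them), and scale back. The bookkeeping of the scaling exponents in your last paragraph is exactly right.
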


\begin{proof}[Proof of Lemma~\ref{lemdecaywt00}] 
Let 
\be{deltildez}
\tilde z(y,s):=z(\xi+\rho y,\tau+\rho^2s),\quad (y,s)\in Q=(-1,0)\times B_1.
\ee
By the standard imbeddings \cite{LSU}, we have
$$[\tilde z]_{\alpha,Q}+\|\nabla \tilde z\|_{L^\infty(Q)}
\le C_0 \left(\|D^2\tilde z\|_{L^q(Q)} + \|\tilde z_t\|_{L^q(Q)}+\|\tilde z\|_{L^q(Q)}\right), 
$$
and \eqref{estimat1.3} follows by scaling back to $z$. 
\end{proof}

\begin{proof}[Proof of Lemma~\ref{lemdecaywt0}] 
The function $\tilde z$ defined in \eqref{deltildez} 
satisfies $\tilde z_t-\Delta \tilde z+\tilde b\cdot\nabla \tilde z=\tilde f:=\rho^2 f(\xi+\rho y,\tau+\rho^2s)$ in $B_1\times(-1,0)$
with $\tilde b(y,s)=\rho b(\xi+\rho y,\tau+\rho^2s)$.
By assumption we have $[\tilde b]_{\alpha,Q_\rho}\le A$ (and $\|\tilde b\|_{L^\infty(Q_\rho)}\le A$ in assertion (ii)).
Consequently, by the usual parabolic estimates (see \cite{Lieb}, \cite[Section~48.1]{quittner_souplet}), 
inequalities \eqref{estimat1.1} and \eqref{estimat1.2} for $\rho=1$ are true with $z, f$ replaced by $\tilde z,\tilde f$.
 The conclusion then follows by scaling back to $z$. \end{proof}

\begin{proof}[Proof of Proposition~\ref{lemdecaywt}] 
In this proof $C$ will denote various positive constants, which may change from line to line
and may depend on the solution $(u,v)$. 

\smallskip

$\bullet$ 
We have
\be{decaywt-e1}
|\nabla v|\le C|x|^{-1}\quad\hbox{in $(B_\sigma\setminus\{0\})\times(0,T)$.}
\ee
Indeed, for $r\in(0,\sigma)$, rewriting the second equation in \eqref{0} as $-(r^{n-1}v_r)_r=r^{n-1}u$,
 and using \eqref{hyp-lemdecaywt}, we have
$$|v_r(r,t)|=r^{1-n}\int_0^r s^{n-1}u(s,t)\,ds\le Cr^{1-n}\int_0^r s^{n-3}\,ds\le Cr^{-1},$$
owing to $n\ge 3$, hence \eqref{decaywt-e1}. 

\smallskip
$\bullet$ 
Pick $\eta\in(0,\min(\sigma/2,\sqrt{T/4}))$.
Let $\xi\in B_\eta$, $\tau\in(T/4,T)$ and set $\rho=|\xi|/2$.
We claim that
\be{decaywt-e2}
[u]_{\alpha,Q_\rho}\le C\rho^{-2-\alpha} 
\ee
and
\be{decaywt-e2a}
|\nabla u|\le C|x|^{-3}\quad \hbox{in } (B_\eta\setminus\{0\})\times (T/2,T).
\ee
By the first equation in  \eqref{0}, $u$ is a solution of \eqref{neweqnu}
with $b=\nabla v$ and $f=u^2$.
Since $\|b\|_{L^\infty(Q_\rho)}\le M\rho^{-1}$ by \eqref{decaywt-e1}, we may apply the parabolic $L^q$ estimate \eqref{estimat1.1}, also using \eqref{hyp-lemdecaywt}, to deduce
$$\|D^2u\|_{L^q(Q_{\rho/2})} + \|u_t\|_{L^q(Q_{\rho/2})}
\le C\Big(\|u^2\|_{L^q({Q_\rho})}+\rho^{-2}\|u\|_{L^q(Q_\rho)}\Big)\le C\rho^{-4+(n+2)/q}.
$$
By the imbedding \eqref{estimat1.3} and \eqref{hyp-lemdecaywt}, we get
$$\rho^{\alpha} [u]_{\alpha,Q_\rho}+\rho \|\nabla u\|_{L^\infty(Q_\rho)}
\le C\rho^{-(n+2)/q} \left(\rho^2\|D^2u\|_{L^q(Q_\rho)} + \rho^2\|u_t\|_{L^q(Q_\rho)}+\|u\|_{L^q(Q_\rho)}\right)\le C\rho^{-2},
$$
hence \eqref{decaywt-e2} and  \eqref{decaywt-e2a}.

\smallskip
$\bullet$ Let 
\be{defxirho}
\xi\in B_\eta,\quad \tau\in(T/2,T),\quad \rho:=|\xi|/2.
\ee
We claim that
\be{decaywt-e3}
[\nabla v]_{\alpha,Q_\rho}\le C\rho^{-1-\alpha}.
\ee
By the second equation in \eqref{0} and \eqref{decaywt-e1}, we have  
\be{decaywt-e4}
|v_{rr}|\le \frac{n-1}{r}|v_r|+u\le C|x|^{-2} \quad\hbox{in $(B_\sigma\setminus\{0\})\times(0,T)$.}
\ee
On the other hand, for $r\in(0,\eta)$ and $T/2<t<t'<T$, \eqref{decaywt-e2} implies
$|u(r,t)-u(r,t')|\le Cr^{-2-\alpha}|t-t'|^{\alpha/2}$. Consequently,
$$\begin{aligned}
|v_r(r,t)-v_r(r,t')|
&\le r^{1-n}\int_0^r s^{n-1}|u(s,t)-u(s,t')|\,ds\\
&\le C|t-t'|^{\alpha/2}r^{1-n}\int_0^r s^{n-3-\alpha}\,ds\le C|t-t'|^{\alpha/2} r^{-1-\alpha},
\end{aligned}$$
owing to $n\ge 3$ and $\alpha<1$.
By combining this with \eqref{decaywt-e4} we easily deduce \eqref{decaywt-e3}.

\smallskip
$\bullet$ Now, for $\xi\, \tau, \rho$ as in \eqref{defxirho}, since $u$ is a solution of \eqref{neweqnu}
with $b=\nabla v$, $f=u^2$ and, in view of \eqref{decaywt-e1} and \eqref{decaywt-e3}, the coefficient $b$ satisfies the assumptions of Lemma~\ref{lemdecaywt0}(ii),
 we may thus apply the parabolic Schauder estimate \eqref{estimat1.2}, along with \eqref{hyp-lemdecaywt}, \eqref{decaywt-e2} and
 $[u^2]_{\alpha,Q_\rho}\le 2\|u\|_{L^\infty(Q_\rho)}[u]_{\alpha,Q_\rho}$, to deduce
$$
\begin{aligned}
 \|u_t\|_{L^\infty(Q_{\rho/2})} 
	&\le C\Big(\rho^{\alpha}[u^2]_{\alpha,Q_\rho}+\|u^2\|_{L^\infty(Q_\rho)}+\rho^{-2}\|u\|_{L^\infty(Q_\rho)}\Big)\le C\rho^{-4},
	\end{aligned}
$$
hence \eqref{decaywt}.
\end{proof}

\begin{proof}[Proof of Theorem~\ref{theo13b}](i)
We put $R=\infty$ in case $\Omega=\R^n$.
Let $\Psi, U$ be given by Proposition~\ref{propA2}(ii). We set
\be{defW}
V(x,t)=\frac{1}{T-t}U\left(\frac{|x|}{\sqrt{T-t}}\right),\quad (x,t)\in \R^n\times(-\infty,T).
\ee
Fix $\eps\in(0,1/2)$. From \cite[Proposition~3.1 and Lemma~3.7]{GMS}, we have
$
\lim_{y\to\infty} y^2\Psi(y)=L_0\in (0,\infty)
$
and
$
\lim_{y\to\infty} \frac{y\Psi'(y)}{\Psi(y)}=-2,
$
hence
\be{GMS1}
\lim_{y\to\infty} y^2U(y)=L:=(n-2)L_0.
\ee
We also note that 
\be{posU}
\hbox{$U>0$ on $[0,\infty)$.}
\ee
Indeed, since $U>0$ at infinity by \eqref{GMS1},
otherwise there would exist $y>0$ such that 
$U(y)=y \Psi_y(y)+n\Psi(y)=0$ and
 $U'(y)=y\Psi_{yy}(y)+(n+1)\Psi_y(y)\ge 0$. 
But, by~\eqref{eqnPsi}, this would imply
$-(n-2)\Psi(y)=y\Psi_y(y)+2\Psi(y)=2(\Psi_{yy}(y)+\frac{n+1}{y}\Psi_y(y))\ge 0$: a contradiction. 

Next, by \eqref{GMS1} there exists $K_\eps\ge 1$ such that
\be{psi1}
(1-\eps)L|y|^{-2}\le U(y)\le (1+\eps)L|y|^{-2} ,\quad |y|\ge K_\eps.
\ee
In particular, we have
\be{psi2}
(1-\eps)L|x|^{-2}\le V(x,t)\le (1+\eps)L|x|^{-2},\quad x\in \R^n,\ t\ge T-K_\eps^{-2}|x|^2.
\ee
Let $K=\max(\eps^{-1/2},K_\eps)$.
By Proposition~\ref{propA2} and \eqref{posU}, 
there exists $\sigma_\eps\in(0,T)$ such that 
\be{cvg1}
(1-\eps)V(x,t)\le u(x,t) \le (1+\eps)V(x,t),\quad  T-\sigma_\eps\le t<T,\ |x|\le K\sqrt{T-t}.
\ee

Now fix any $x\in \R^n$ with $|x|<R_K:=\min(R/2, K(\sigma_{\eps})^{1/2})$, set 
\be{deft0}
t_0(x):=T-K^{-2}|x|^2\ge T-\sigma_\eps,
\ee
and let $t_1\in(t_0(x),T)$. Integrating \eqref{decaywt}  in Proposition~\ref{lemdecaywt} 
over $(t_0(x),t_1)$, it follows that 
\be{estimt0u}
|u(x,t_1)-u(x,t_0(x))|\le \int_{t_0(x)}^{t_1}|u_t(x,\tau)|d\tau\le C|x|^{-4}(T-t_0(x))\le CK^{-2}|x|^{-2}
\le C\eps |x|^{-2}.
\ee
Since estimate \eqref{decaywt} holds for $V$ as well, we also get
\be{estimt0V}
|V(x,t_1)-V(x,t_0(x))|\le  C\eps |x|^{-2}.
\ee
By \eqref{estimt0u}, \eqref{estimt0V}, we have
$$\begin{aligned}
\left| u(x,t_1)-V(x,t_1)\right|
&\le \left| u(x,t_1)-u(x,t_0(x))\right|+\left| u(x,t_0(x))-V(x,t_0(x))\right|+\left| V(x,t_0(x))-V(x,t_1)\right|\\
&\le \left| u(x,t_0(x))-V(x,t_0(x))\right|+2C\eps |x|^{-2}.
\end{aligned}$$
Recalling \eqref{deft0} and using \eqref{cvg1} for $t=t_0(x)$, it follows that
$$\left| u(x,t_1)-V(x,t_1)\right|\le \eps(V(x,t_0(x))+2C|x|^{-2}).$$
Since, by \eqref{psi2}, $V(x,t_0(x))\le 3V(x,t_1)$
and $|x|^{-2}\le 2L^{-1}V(x,t_1)$, we deduce that
 $$\begin{aligned}
\left| u(x,t_1)-V(x,t_1)\right|
\le 3\eps(1+CL^{-1})V(x,t_1),\quad
|x|<R_K,\ t\in(t_0(x),T).
\end{aligned}$$
Combining this with \eqref{cvg1}, valid in 
the complementary range $t\in[T-\sigma_K,t_0(x)]$, we conclude that, for any $\tilde\eps\in(0,1/2)$,
exist $R_1\in(0,R)$ and $\tau\in(0,T)$ such that
\be{e1a}
(1-\tilde\eps)V(x,t)\le u(x,t)\le (1+\tilde\eps)V(x,t),\quad \hbox{for all  $|x|\le R_1$ and $t\in [T-\tau,T)$,}
\ee
which is the desired convergence.

\smallskip

(ii)  Let $V$ be defined by \eqref{defW}.
By \eqref{GMS1}, \eqref{posU}, there exist $c_1,c_2$ such that
$c_1\le (1+|y|^2)U(y)\le c_2$ for all $y\ge 0$,
hence
$$
\frac{c_1}{T-t+|x|^2}\le V(x,t) \le \frac{c_2}{T-t+|x|^2},
	\qquad (x,t)\in \R^n\times(0,T).
	$$
	Applying \eqref{13.1} in Theorem~\ref{theo13} and taking $\eta,\tau>0$ small enough so that $1/2\le \eps(x,t)\le 2$ for all $(x,t)\in B_\eta\times(T-\tau,T)$, it follows that
\be{globalasympt2}
\frac{c_1/2}{T-t+|x|^2}\le u(x,t) \le \frac{2c_2}{T-t+|x|^2},
	\qquad (x,t)\in B_\eta\times(T-\tau,T).
	\ee
	
Finally, we have $u>0$ in $\Omega\times(0,T)$ by the strong maximum principle.
	Also, since $u$ remains bounded outside of the origin, it extends by parabolic regularity to a function 
	$u\in C^{2,1}((\overline\Omega\setminus\{0\})\times[T/2,T])$. Consequently
	$u(\rho,t)\ge c>0$ for all $t\in [T/2,T]$ (with $\rho=1$ if $\Omega=\R^n$
	and $\rho=R$ in case $\Omega=B_R$, using Hopf's Lemma 
	and the boundary conditions in \eqref{0} for $u$ in the second case).
	In view of this and \eqref{globalasympt2}, we readily obtain \eqref{globalasympt}
	for some constants $C_1, C_2>0$.

\smallskip

(iii) By \eqref{GMS1}, for all $x\in\R^n\setminus\{0\}$, we have $\ds\lim_{t\to T}V(x,t)=L|x|^{-2}$.
For any $\delta\in(0,1/2)$, by \eqref{13.1} in Theorem~\ref{theo13}, there exist $\eta,\tau>0$ such that 
$$(1-\delta)V(x, t)\le u(x,t)\le (1+\delta)V(x, t), \quad (x,t)\in B_\eta\times(T-\tau,T).$$
Letting $t\to T$, we deduce 
that $(1-\delta)L|x|^{-2}\le u(x,T)\le (1+\delta)L|x|^{-2}$ for all $x\in B_\eta\setminus\{0\}$.
 The assertion follows.
\end{proof}

\begin{proof}[Proof of Theorem~\ref{theo13}]
This is a direct consequence of Theorem~\ref{theo13b}, Proposition~\ref{propA1} 
and the fact that $B(u_0)=\{0\}$ whenever $u_0$ satisfies \eqref{i0} and $u_0\in L^1(\R^n)$
(cf.~Remark~\ref{remopen}(i)). 
\end{proof}

\section{Proof of Theorem~\ref{theo14} and simpler direct proof of space-time lower bound}
\label{secmacro2}

In this section, we put $r=|x|$ and denote
$$m(t)=w(0,t).$$
Also we set $R=1$ in case $\Omega=\R^n$.
 The proof of Theorem~\ref{theo14}  relies on two lemmas.
The first one provides a lower space-time estimate of the averaged mass function $w$ 
under the monotonicity assumption \eqref{i2}.

\begin{lem}\label{lem-monot1}
Under the assumptions of Theorem~\ref{theo14}, there exist $c_1,\eta>0$ such that  
\be{estiminfwmonotA0}
w(x,t)\ge c_1\big(m^{-1}(t)+|x|^2\big)^{-1},\quad |x|\le \eta,\ 0\le t<T.
\ee
\end{lem}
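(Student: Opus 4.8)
The plan is to prove the lower bound \eqref{estiminfwmonotA0} in two stages: first establishing it on a suitable backward ``parabolic'' neighborhood of the origin, and then propagating it to the remaining time interval using the time monotonicity furnished by \eqref{i2}.

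\textbf{Step 1: A gradient estimate for $w$.} First I would record that under the type I--free assumptions here, the energy argument (in the spirit of the functional estimates used in \cite{Soup-Win}, or a Gidas--Spruck type rescaling applied to \eqref{e1}) yields a bound of the form $|w_r(r,t)|\le C m^{3/2}(t)$ for $r$ small and $t$ near $T$. The scaling heuristic is that $w$ lives at amplitude $m(t)$ on the natural length scale $m^{-1/2}(t)$, so $w_r$ should be of size $m(t)\cdot m^{1/2}(t)=m^{3/2}(t)$. This is the one ingredient I expect to require genuine work; it is presumably isolated as an auxiliary estimate earlier in Section~\ref{secmacro2} or quoted, and I would invoke it here.

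\textbf{Step 2: Lower bound on a generalized parabola.} Integrating the gradient bound from $0$ to $r$ gives $w(r,t)\ge m(t)-C m^{3/2}(t)\,r$. Hence, choosing $c>0$ small, we get $w(r,t)\ge \tfrac12 m(t)$ for all $r\le c\, m^{-1/2}(t)$, i.e. on the region $|x|^2\le c^2 m^{-1}(t)$. In that region, $m^{-1}(t)+|x|^2\le (1+c^2)m^{-1}(t)$, so $w(x,t)\ge \tfrac12 m(t)\ge \tfrac{1}{2(1+c^2)}\,(m^{-1}(t)+|x|^2)^{-1}$, which is \eqref{estiminfwmonotA0} on this parabolic region.

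\textbf{Step 3: Propagation by time monotonicity.} For fixed small $x$ with $|x|\le\eta$, define the local time $t_1(x)$ by $|x|^2=c^2 m^{-1}(t_1(x))$; this is well-defined because $m(t)\to\infty$ as $t\to T$ (blow-up occurs, and $\|u(t)\|_\infty\to\infty$ forces $m(t)\to\infty$ since $B(u_0)=\{0\}$ — see Remark~\ref{remopen}(i)), and $m$ is continuous. For $t\le t_1(x)$ we have $|x|^2\le c^2 m^{-1}(t)$, so Step~2 applies directly. For $t\in[t_1(x),T)$, assumption \eqref{i2} gives $w_t\ge 0$ (as noted after \eqref{i2} and in Section~\ref{ideas}), so $w(x,t)\ge w(x,t_1(x))\ge \tfrac12 m(t_1(x))=\tfrac12 c^{-2}|x|^{-2}$; and since on this range $m^{-1}(t)\le m^{-1}(t_1(x))=c^{-2}|x|^2$, we get $m^{-1}(t)+|x|^2\le (1+c^{-2})|x|^2$, hence $w(x,t)\ge \tfrac{c^{-2}}{2(1+c^{-2})}(m^{-1}(t)+|x|^2)^{-1}$. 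Taking $c_1$ to be the smaller of the two constants obtained in Steps~2 and~3, and $\eta$ small enough that Step~1 is valid, completes the proof. The only real obstacle is the gradient estimate of Step~1; everything after it is elementary bookkeeping with the two regimes $t\lessgtr t_1(x)$.
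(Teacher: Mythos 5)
Your proposal is correct and follows essentially the same route as the paper: gradient bound $|w_r|\le C\,m^{3/2}(t)$, then the parabolic region $|x|\lesssim m^{-1/2}(t)$ on which $w\ge\tfrac12 m(t)$, then propagation beyond the local time $t_1(x)$ via $w_t\ge 0$. The one ingredient you flagged as open, the gradient estimate, is obtained in the paper by the short monotone-energy computation $\partial_r\bigl(\tfrac12 w_r^2+\tfrac n3 w^3\bigr)=\bigl(w_t-\tfrac{n+1}{r}w_r-rww_r\bigr)w_r\le 0$, using $w_t\ge 0$ (from \eqref{i2}) and $w_r\le 0$, so it is a direct consequence of the monotonicity hypothesis rather than a separate heavy input, and a Gidas--Spruck rescaling would not be available here in the absence of a type~I bound.
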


\begin{proof}
First, there exists $C_1>0$ such that 
\be{gradest}
|w_r|\le C_1m^{3/2}(t),\quad 0\le |x|<R,\ 0< t<T 
\ee
(cf.~\cite{GP, SouProc, Soup-Win}). We recall the simple proof  for completeness.
By assumption \eqref{i2} and \cite[Proposition~3.5]{Soup-Win}), we have $w_t\geq 0$.
Since also $w_r\leq 0$ and $w\ge 0$, \eqref{e1} yields
$${\partial\over\partial r}\Bigl({1\over 2}w_r^2+{n\over 3}w^{3}\Bigr)
=(w_{rr}+nw^2)w_r
=\Bigl(w_t -\frac{n+1}{r}w_r-rww_r\Bigr)w_r\leq 0.$$
This guarantees
$$\Bigl({1\over 2}w_r^2+{n\over 3}w^3\Bigr)(r,t)
\leq {n\over 3}w^3(0,t),$$
hence \eqref{gradest}.

Next setting $C_2:=(2C_1)^{-1}$, it follows from \eqref{gradest} that
\be{pfmonot1}
w(x,t)\ge m(t)\big(1-C_1|x|m^{1/2}(t)\big)\ge \ts\frac12 m(t),\quad\hbox{for $|x|\le C_2m^{-1/2}(t)$ and $0< t<T$.} 
\ee
Fix $x_0$ with $|x_0|<\eta:=\min(R,C_2m^{-1/2}(0))$.
Since $\lim_{t\to T}m(t)=\infty$, there exists a minimal $t_1=t_1(x_0)\in(0,T)$ such that 
\be{pfmonot1b}
|x_0|=C_2m^{-1/2}(t_1(x_0)).
\ee
Consequently we have $|x_0|\le C_2m^{-1/2}(t)$ for $t\in[0,t_1(x_0)]$ and \eqref{pfmonot1} implies
\be{pfmonot2}
w(x_0,t)\ge  \ts\frac12 m(t),\quad 0\le t\le t_1(x_0).
\ee
On the other hand, since $w_t\ge 0$, it follows from \eqref{pfmonot1b} and \eqref{pfmonot2} that 
\be{pfmonot3}
w(x_0,t)\ge w(x_0,t_1(x_0))\ge\ts\frac12 m(t_0)=C_3|x_0|^{-2},\quad t_1(x_0)<t<T, 
\ee
where $C_3=\frac12 C_2^2=(8C_1^2)^{-1}$. Setting $c_1=\min(C_3,\frac12)$ 
and combining \eqref{pfmonot2} and \eqref{pfmonot3}, we obtain
$$w(x_0,t)\ge c_1\big(m^{-1}(t)+|x_0|^2\big)^{-1},\quad 0\le t<T,$$
hence \eqref{estiminfwmonotA0}.
\end{proof}

Our second lemma, based on an averaging procedure, provides a lower space-time estimate of $u$ assuming lower and upper estimates of $w$.

\begin{lem}\label{lem-monot2}
Let $n\ge 3$ and consider problem \eqref{0}
with $\Omega=\R^n$ or $\Omega=B_R$. Let $u_0$ satisfy \eqref{i0} and $T(u_0)<\infty$
and assume that there exist $c_1,c_2,\eta>0$ 
and $\tau\in(0,T)$ such that 
\be{estiminfwmonotA}
c_1\big(m^{-1}(t)+|x|^2\big)^{-1}\le w(x,t)\le c_2\big(m^{-1}(t)+|x|^2\big)^{-1},\quad |x|\le \eta,\ \tau\le t<T.
\ee
Then there exists $c_3>0$ such that 
  \be{13.1b2}
	 u(x,t) \ge  c_3\big(u^{-1}(0,t)+|x|^2\big)^{-1},
	\qquad (x,t)\in B_R\times[T/2,T).
  \ee
\end{lem}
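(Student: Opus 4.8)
\textbf{Proof plan for Lemma~\ref{lem-monot2}.}
The goal is to transfer the two-sided estimate \eqref{estiminfwmonotA} on the averaged mass $w$ to a lower bound on $u$ itself, using the algebraic relation $u=nw+rw_r$ from \eqref{reluw}. The difficulty is that this relation involves $w_r$, which is negative under \eqref{i0}, so one cannot simply write $u\ge nw$; instead one must exploit a change of scale. The natural idea is the averaging/rescaling trick: for fixed $t$ close to $T$ and fixed small $x$, compare the values of $w$ at radius $|x|$ and at a slightly larger radius, say $\theta|x|$ for some fixed $\theta\in(1,2)$ to be chosen, and use the monotonicity $w_r\le 0$ together with the mean value theorem to extract a pointwise lower bound on $u$ somewhere in $[|x|,\theta|x|]$, then propagate it back using $u_r\le 0$.

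More precisely, I would argue as follows. Set $\lambda(t):=m^{-1/2}(t)$, which tends to $0$ as $t\to T$, and write $d(x,t):=\lambda^2(t)+|x|^2\simeq m^{-1}(t)+|x|^2$. From \eqref{w} we have $r^n w(r,t)=\int_0^r u(s,t)s^{n-1}\,ds$, so for $0<r<r'<R$,
\[
(r')^n w(r',t)-r^n w(r,t)=\int_r^{r'} u(s,t)s^{n-1}\,ds\ \le\ u(r,t)\,\frac{(r')^n-r^n}{n},
\]
using $u_r\le 0$ (from Proposition~\ref{locprop}(ii)). Hence
\[
u(r,t)\ \ge\ \frac{n\big((r')^n w(r',t)-r^n w(r,t)\big)}{(r')^n-r^n}.
\]
Now take $r=|x|\le\eta$, $r'=\theta r$ with $\theta\in(1,2)$ fixed, and $t\in[\max(\tau,T/2),T)$. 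Apply the lower bound of \eqref{estiminfwmonotA} at radius $\theta r$ and the upper bound at radius $r$: since $d(\theta x,t)\le\theta^2 d(x,t)\le 4d(x,t)$, we get $w(\theta x,t)\ge c_1/(4d(x,t))$ and $w(x,t)\le c_2/d(x,t)$, so
\[
u(x,t)\ \ge\ \frac{n}{(\theta^n-1)|x|^n}\Big(\frac{c_1\theta^n}{4}-c_2\Big)\frac{|x|^n}{d(x,t)}\ =\ \frac{n(c_1\theta^n/4-c_2)}{\theta^n-1}\cdot\frac{1}{d(x,t)}.
\]
The bracket $c_1\theta^n/4-c_2$ can be made positive by choosing $\theta$ close enough to $2$ (indeed $c_1 2^n/4-c_2=2^{n-2}c_1-c_2$, which is positive if $c_2<2^{n-2}c_1$; if not, one first improves the constants — see below). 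This yields $u(x,t)\ge c\,d(x,t)^{-1}=c\big(m^{-1}(t)+|x|^2\big)^{-1}$ on $B_\eta\times[\max(\tau,T/2),T)$.

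The remaining steps are: (a) replace $m^{-1}(t)$ by $u^{-1}(0,t)$ in the final estimate, which follows since $u(0,t)=nw(0,t)=nm(t)$ by \eqref{reluw} (as $rw_r\to0$ at $r=0$), so $m^{-1}(t)=n\,u^{-1}(0,t)$ and the two quantities are comparable; (b) extend the estimate from $B_\eta$ to all of $B_R$ for $t\in[T/2,T)$ — outside $B_\eta$, $u$ is bounded below by a positive constant since it extends to a continuous positive function on $(\overline\Omega\setminus\{0\})\times[T/2,T]$ by parabolic regularity and the strong maximum principle (plus Hopf's lemma at $r=R$ in the ball case), while $\big(u^{-1}(0,t)+|x|^2\big)^{-1}\le |x|^{-2}\le\eta^{-2}$ there, so the bound holds after shrinking $c_3$; (c) handle the case $t\in[T/2,\tau]$ if $\tau>T/2$ — but here $u^{-1}(0,t)$ is bounded below and $u$ is bounded below on the compact region, so again the estimate holds trivially after adjusting constants. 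The one genuine subtlety is the sign of the bracket: if the hypothesis only gives $c_2\ge 2^{n-2}c_1$, one should instead iterate or use a sharper comparison — e.g. exploit that $w(r,t)/w(\theta r,t)\to 1$ as $r/\lambda\to\infty$ and a careful version near the diagonal — but in the regime where $|x|\gg\lambda(t)$, $w$ behaves like $|x|^{-2}$ with a definite profile and the ratio is essentially pinned, while in the regime $|x|\lesssim\lambda(t)$ one has $w(x,t)\ge\frac12 m(t)$ directly from \eqref{pfmonot1}-type gradient bounds, so the two constants in \eqref{estiminfwmonotA} are automatically comparable there. I expect reconciling these two regimes cleanly — i.e. ensuring the averaging argument produces a positive constant uniformly — to be the main technical point; everything else is routine.
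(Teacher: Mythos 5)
Your overall strategy is exactly the paper's: use $r^n w(r,t)=\int_0^r u(s,t)s^{n-1}\,ds$, compare the integral at two radii, and exploit $u_r\le 0$ to pull out a pointwise lower bound on $u$. The transfer $m^{-1}(t)=n\,u^{-1}(0,t)$ and the extension from $B_\eta$ to $B_R$ via continuity/positivity of $u$ away from the origin are also the paper's routine final steps. So the approach matches.

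However, there is a genuine gap in the main quantitative step, and you have identified it yourself but not closed it. You impose $\theta\in(1,2)$, and the resulting lower bound has the bracket $c_1\theta^{n}/4-c_2$; with $\theta<2$ this is at most $2^{n-2}c_1-c_2$, which can be negative — the hypothesis gives no relation between $c_1$ and $c_2$ beyond $c_1\le c_2$. Your suggested remedies are speculative: the ``two-regime'' discussion invokes \eqref{pfmonot1}, which is an internal estimate from the \emph{proof} of Lemma~\ref{lem-monot1} under the extra monotonicity hypothesis \eqref{i2}, and is not available under the hypotheses of Lemma~\ref{lem-monot2}; and ``iterate or use a sharper comparison'' is not carried out. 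In fact the restriction $\theta<2$ is both unnecessary and self-defeating. Keeping $\theta$ free, your own inequality gives
$$u(x,t)\ \ge\ \frac{n\bigl(c_1\theta^{n-2}-c_2\bigr)}{\theta^n-1}\cdot\frac{1}{\sigma+|x|^2},$$
using $w(\theta x,t)\ge c_1/(\sigma+\theta^2|x|^2)\ge c_1/\bigl(\theta^2(\sigma+|x|^2)\bigr)$ and $w(x,t)\le c_2/(\sigma+|x|^2)$; since $n\ge 3$ one can simply choose $\theta$ large (e.g.\ $\theta^{n-2}\ge 2c_2/c_1$), which forces the bracket to be positive with a constant depending only on $n,c_1,c_2$, and work on $|x|\le\eta/\theta$. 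This is essentially what the paper does, taking a scaling factor $K=1+(2nc_2/c_1)^{1/(n-2)}$ that can be arbitrarily large. With that single modification your argument closes; as written, the key constant is not shown to be positive and the proof is incomplete.
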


\smallskip
\begin{proof}
 It suffices to prove \eqref{13.1b2} with $\tau$ in place of $T/2$ and some $R_1\in(0,R)$
(arguing as in the second paragraph of the proof of Theorem~\ref{theo13b}(ii)). 
Fix $t\in[\tau,T)$ and set $\sigma=1/m(t)$.
 For $r\in[0,\eta]$,
by the lower part of \eqref{estiminfwmonotA} and \eqref{w}, we have
\be{e10}
\int_0^r s^{n-1}u(s,t)\,ds=r^n w(r,t) \ge \frac{c_1r^{n}}{\sigma+ r^2}.
\ee
By the upper part of \eqref{estiminfwmonotA} and \eqref{reluw} 
we have $u(r,t)\le nw(r,t)\le C_3(\sigma+r^2)^{-1}$
with $C_3=nc_2$.
 Observe that the function $s\mapsto \frac{s^{n-1}}{\sigma+s^2}$ is nondecreasing owing to $n\ge 3$.
It follows that
\be{e11}
\int_0^r s^{n-1}u(s,t)\,ds\le
 C_3\int_0^r  \frac{s^{n-1}}{\sigma+s^2}\,ds\le\frac{C_3r^n}{\sigma+r^2}.
\ee
 Let $K>1$ to be chosen below and assume $r\le \eta/K$.
 Since $u_r\le 0$, combining (\ref{e10}),  where $r$ is replaced by $Kr$, with (\ref{e11}), we deduce that
\be{e11b}
\begin{array}{ll}	
\displaystyle{(Kr)^nu(r,t)\over n}
&\ge \displaystyle\int_r^{Kr} s^{n-1}u(s,t) \,ds= \int_0^{Kr} s^{n-1}u(s,t) \,ds- \int_0^r s^{n-1}u(s,t)\,ds \\ [3mm]
&\ge \displaystyle\frac{c_1K^nr^{n}}{\sigma+K^2r^2}-\frac{C_3r^n}{\sigma+r^2}.
\end{array}
\ee
 Taking $K=1+(2C_3/c_1)^{1/(n-2)}$, we get
$$\frac{c_1K^nr^{n}}{\sigma+K^2r^2}=c_1K^{n-2}\frac{K^2r^{n}}{\sigma+K^2r^2}
\ge 2C_3\frac{r^n}{\sigma+r^2}.$$
It then follows from \eqref{e11b} that
$$\displaystyle{(Kr)^nu(r,t)\over n}\ge \frac{C_3r^n}{\sigma+r^2}.$$
Since, by \eqref{reluw},
$u(0,t)=nm(t)$, 
we obtain
$$u(r,t)\ge \frac{nC_3K^{-n}}{\sigma+r^2}\ge \frac{C_3K^{-n}}{u^{-1}(0,t)+r^2},
\quad r\le \eta/K,\ \tau\le t<T,$$ 
hence \eqref{13.1b2}.
\end{proof}

\begin{proof}[Proof of Theorem~\ref{theo14}]
The lower and upper parts of \eqref{estiminfwmonotA} are guaranteed by
Lemma~\ref{lem-monot1} and by
\cite[Theorem~1.3 and Remark~(vi)~p.670]{Soup-Win}, respectively.
The conclusion then follows from Lemma~\ref{lem-monot2}.
\end{proof}

The following two-sided estimate  (whose upper part is from \cite{Soup-Win})
gives  the qualitatively precise global description of the space-time blow-up behavior
for type I solutions.
It is contained in Theorem~\ref{theo13b}. 

\begin{theo}\label{theo13b2}
Let $n\ge 3$ and consider problem \eqref{0} with $\Omega=\R^n$ or $\Omega=B_R$, where $u_0$ satisfies \eqref{i0} and $T(u_0)<\infty$.
Assume
\eqref{hyp13.1}, \eqref{hyp13.1b}.
Then there exist $C_1, C_2>0$ such that 
$$ 
\frac{C_1}{T-t+|x|^2}\le u(x,t) \le \frac{C_2}{T-t+|x|^2},
	\qquad (x,t)\in B_R\times[T/2,T).
$$
\end{theo}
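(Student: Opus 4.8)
The upper bound is contained in \cite{Soup-Win}, so the plan is to establish only the lower bound. Although the two-sided estimate also follows from Theorem~\ref{theo13b} (via Proposition~\ref{propA2}), I would give a direct, self-contained derivation of the lower bound, following the two-step scheme of the proof of Theorem~\ref{theo14}. First I would record some reductions. Under \eqref{i0} and \eqref{hyp13.1b} one has $B(u_0)=\{0\}$ by Remark~\ref{remopen}(i); since $u_r\le 0$ (cf.~\eqref{e1wr}), $u(0,t)=\|u(\cdot,t)\|_\infty$, and combining \eqref{hyp13.1} with the general bound $\liminf_{t\to T}(T-t)\|u(\cdot,t)\|_\infty>0$ gives $c(T-t)^{-1}\le u(0,t)\le C(T-t)^{-1}$ for $t$ near $T$. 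Hence, writing $m(t):=w(0,t)=u(0,t)/n$ (cf.~\eqref{reluw} at $r=0$), both $m^{-1}(t)$ and $u^{-1}(0,t)$ are comparable to $T-t$, so it suffices to prove a lower estimate of the form $w(x,t)\ge c_1\bigl(m^{-1}(t)+|x|^2\bigr)^{-1}$ for $|x|\le\eta$ and $T/2\le t<T$: combined with the matching upper estimate on $w$ from \cite{Soup-Win}, Lemma~\ref{lem-monot2} then converts this into $u(x,t)\ge c_3\bigl(u^{-1}(0,t)+|x|^2\bigr)^{-1}\ge C_1\bigl(T-t+|x|^2\bigr)^{-1}$ on $B_R\times[T/2,T)$.

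The first step toward the $w$-estimate is a lower bound on a parabola. As in Proposition~\ref{propA2} I would pass to similarity variables $\phi(y,s)=(T-t)\,w\bigl(y\sqrt{T-t},t\bigr)$, $s=-\log(T-t)$, and treat $\phi$ as a radial function on $\R^{n+2}$ solving the similarity-variables counterpart of \eqref{highereq} (a uniformly parabolic equation with drift of size $O(|\tilde y|)$ and a quadratic zero-order term). Type~I makes $\phi$ bounded, so interior parabolic regularity near the origin --- via the scaled $L^q$ and imbedding estimates of Lemma~\ref{lemdecaywt0}, exactly as in the proof of Proposition~\ref{lemdecaywt} --- yields a uniform gradient bound $|\tilde\nabla\phi|\le C$ for $|\tilde y|\le 1$ and $s$ large. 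Since $\phi(0,s)=(T-t)m(t)\ge c_0>0$ by the reduction above, this gives $\phi(y,s)\ge\phi(0,s)-C|y|\ge\tfrac12\phi(0,s)$ for $|y|\le\delta:=c_0/(2C)$, that is,
\[
w(x,t)\ge\tfrac12\,m(t)\qquad\text{whenever }|x|\le\delta\sqrt{T-t},
\]
equivalently $w(x,t)\ge\tfrac12 m(t)$ for $t\le t_2(x):=T-k|x|^2$, where $k=k(\delta)$ is a \emph{fixed} constant; crucially it cannot be taken small.

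The second step propagates the bound to the remaining interval $(t_2(x),T)$. Here neither the monotonicity argument of Theorem~\ref{theo14} (there is no $w_t\ge 0$) nor the differentiated-equation propagation used for Theorem~\ref{theo13b} (which required $k$ arbitrarily small) applies. Instead I would use that, by \eqref{highereq}, $w_t-\tilde\Delta w=uw\ge 0$, so $w$ is a nonnegative supersolution of the heat equation on $B\times(t_2(x),T)$, where $B\subset\R^{n+2}$ is the ball of radius $|x|/2$ centered at a point at distance $|x|$ from the origin --- a cylinder staying away from the origin. Choosing $k$ so that the parabola of Step~1, at time $t_2(x)$, contains the ball of radius $\tfrac32|x|$ about the origin (hence $B$), Step~1 gives $w(\cdot,t_2(x))\ge\tfrac12 m(t_2(x))\ge c|x|^{-2}$ on $B$, while $w\ge 0$ on its lateral boundary. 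Rescaling parabolically by $|x|$ (which turns the cylinder into one of fixed shape, since $T-t_2(x)=k|x|^2$) and comparing with the solution of the heat equation carrying these data --- which is bounded below by a positive constant up to the final time --- gives $w(x,t)\ge c_2|x|^{-2}$ for $t\in[t_2(x),T)$. Combining this with Step~1 on the complementary range $t\le t_2(x)$, and invoking positivity and continuity of $w$ on a compact region $\{|x|\le\eta\}\times[T/2,T-\theta]$ to handle times bounded away from $T$, one obtains the desired lower estimate on $w$, and the theorem follows.

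The delicate point I anticipate is the bookkeeping of scales linking Steps~1 and~2: the gradient bound of Step~1 forces $k$ to be a definite, not-small constant if the resulting inequality is to control a whole ball $B_{|x|/2}(x)$ rather than merely the axis point, which is exactly why the $u_t$-propagation of Theorem~\ref{theo13b} is unavailable here and the local parabolic comparison of Step~2 is needed --- and that comparison is elementary only because the reaction $uw$ is nonnegative, so $w$ is a heat supersolution and no sharp estimate is required in it. The other ingredients (boundedness of $\phi$, the interior regularity input, the equivalences $m^{-1}(t)\sim u^{-1}(0,t)\sim T-t$, and the averaging transfer of Lemma~\ref{lem-monot2}) should be routine given what precedes.
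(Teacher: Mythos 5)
Your proposal reproduces the paper's own direct proof of the lower estimate in Theorem~\ref{theo13b2} essentially step by step: the reduction via Lemma~\ref{lem-monot2} and $m^{-1}(t)\sim u^{-1}(0,t)\sim T-t$, the similarity-variables gradient bound yielding $w\ge\tfrac12 m(t)$ on the parabola $|x|\le\delta\sqrt{T-t}$, and the heat-supersolution comparison (using $w_t-\tilde\Delta w=uw\ge 0$) on a local cylinder to propagate the bound over the remaining interval $(t_2(x),T)$. The only cosmetic differences are the choice of comparison ball (the paper takes $\tilde B_{2|x_0|}(\tilde x_0)$ containing the origin, you take a smaller ball avoiding it) and that the paper makes the final lower bound explicit via the first Dirichlet eigenfunction and eigenvalue; these are interchangeable.
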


As explained in Section~\ref{ideas},
we now give a direct and simpler proof of the lower estimate in Theorem~\ref{theo13b2}, 
independent of the microscopic convergence result from Proposition~\ref{propA2}.

\begin{proof}[Proof of  the lower estimate in Theorem~\ref{theo13b2}]
 Integrating the inequality $m'(t)=w_t(0,t)\le nm^2(t)$ 
(which follows from \eqref{e1}), we get $m(t)\ge n^{-1}(T-t)^{-1}$. Using
$u(0,t)=nw(0,t)$, due to \eqref{reluw}, and assumption \eqref{hyp13.1}, we deduce that
\be{pfmonotB0b}
 (T-t)^{-1}\le u(0,t)\le M(T-t)^{-1}\quad\hbox{and}\quad
m(t)=n^{-1}u(0,t), \quad  0\le t<T.
\ee
In view of  Lemma~\ref{lem-monot2} and \eqref{pfmonotB0b},
since the upper estimate in \eqref{estiminfwmonotA} is guaranteed by \cite{Soup-Win}, it suffices to show that, for some $\sigma\in(0,R]$ and $\tau\in(0,T)$,
\be{estiminfwmonot}
w(x,t)\ge C\big(T-t+|x|^2\big)^{-1},\quad |x|<\sigma,\ \tau\le t<T. 
\ee

We first claim that 
\be{pfmonotB0} 
|w_r|\le M_1(T-t)^{-3/2}, \quad 0\le |x|<\bar\rho\sqrt{T-t},\ \tau\le t<T,
\ee
for some $M_1,\bar\rho>0$ and $\tau\in(0,T)$. 
To this end we introduce the similarity variables
\be{VraieE0def}
\phi (y,s)=(T-t)w\big(y\sqrt{T-t},t\big),\quad y=\frac{r}{\sqrt{T-t}},\ s=-\log (T-t).
\ee
By direct computation, we see that $\phi$ is a global, radially symmetric solution of
\be{VraieE0}
\phi_s=\phi_{yy}+\big(\ts\frac{n+1}{y}-\ts\frac{y}{2} \big)\phi_y-\phi+n\phi^2+y\phi\phi_y,\quad \hbox{$y\in D(s):=(0,Re^{s/2})$ and 
$s>-\log T$,}
\ee
along with $\phi_y(0,s)=0$.
Moreover, $\phi$ is globally bounded in view of \eqref{pfmonotB0b} and $w_r\le 0$. 
It then follows from interior parabolic estimates that there exists $M_1>0$ such that 
$|\phi_y|\le M_1$ for 
$s\in[s_1,\infty)$ and $0<y\le\bar y$,
with $s_1=\max(s_0+1,0)$ and $\bar y=\min(1,R/2)$.
Since $\phi_y(y,s)=(T-t)^{3/2}w_r(r,s)$, this implies \eqref{pfmonotB0}.

Setting $M_2:=\min(\bar\rho,(2M_1)^{-1})$, 
it follows from \eqref{pfmonotB0b}, \eqref{pfmonotB0} that
\be{pfmonotB1}
\begin{aligned}
w(x,t)\ge 
(T-t)^{-1}-M_1|x|(T-t)^{-3/2}&\ge  \ts\frac12 (T-t)^{-1},\\ 
\noalign{\vskip 1mm}
&\quad\hbox{for $|x|\le M_2\sqrt{T-t}$ and $\tau\le t<T$.}
\end{aligned}
\ee
Fix $x_0$ with $|x_0|<\sigma:=\min(R,\frac12 M_2\sqrt{T-\tau})$
and set $t_0=T-4M_2^{-1}|x_0|^2$.
By \eqref{pfmonotB1}, we have 
\be{pfmonotB1c}
w(x_0,t)\ge \ts\frac12 (T-t)^{-1},\quad \tau\le t\le t_0. 
\ee
To estimate $w(x_0,t)$ in the remaining time interval $[t_0,T)$, we will use a comparison argument in the cylinder 
$\tilde B\times[t_0,T)$ where $\tilde B:=\tilde B_\delta(\tilde x_0)$, $\delta:=2|x_0|$, $\tilde x_0=(x_0,0,0)$.
Set $M_3:=\frac18 M_2$. 
By \eqref{pfmonotB1}, at the initial time $t_0$ we have
\be{pfmonotB2} 
w(x,t_0)\ge\ \ts\frac12 (T-t_0)^{-1}=M_3|x_0|^{-2},\quad\hbox{for } |x|\le M_2\sqrt{T-t_0}=2|x_0|.
\ee
	Denote by $\varphi$ the first eigenfunction of the negative Dirichlet Laplacian in $\tilde B_{1}$ normalized by $\|\varphi\|_\infty=\varphi(0)=1$ and $\lambda_1>0$ the corresponding eigenvalue.
	Set 
 $\phi_{\tilde B}(\tilde x)=\varphi((\tilde x-\tilde x_0)/\delta)$ and $\lambda_\delta=\frac{\lambda_1}{\delta^2}$.
Also denote by $S_{\tilde B}$ the Dirichlet heat semigroup on $ \tilde B$.
Since $w\ge 0=S_{\tilde B}(t-t_0)\phi_{\tilde B}$ on $\partial \tilde B\times (t_0,T)$, 
it follows from the maximum principle that, for $(\tilde x,t)\in \tilde B\times (t_0,T)$, 
$$\begin{aligned}
w(\tilde x,t):=w(x,t)
&\ge S_{\tilde B}(t-t_0)M_3|x_0|^{-2}\\
&\ge M_3|x_0|^{-2} S_{\tilde B}(t-t_0)\phi_{\tilde B}
=M_3|x_0|^{-2}\exp(-\lambda_\delta(t-t_0))\phi_{\tilde B}\\
&\ge M_3|x_0|^{-2}\exp\Bigl(-\frac{\lambda}{4|x_0|^2}(T-t_0)\Bigr)\phi_{\tilde B}=M_4|x_0|^{-2}\phi_{\tilde B},
\end{aligned}$$
where $M_4=M_3\exp(-\frac{\lambda}{M_2})$.
In particular, 
\be{pfmonotB3}
w(\tilde x_0,t)=w(x_0,t)\ge M_4|x_0|^{-2},\quad t_0<t<T.
\ee
Setting $M_5=\min(M_4,\frac12)$ and combining \eqref{pfmonotB1c} and \eqref{pfmonotB3}, we obtain
$$w(x_0,t)\ge M_5\big(T-t+|x_0|^2\big)^{-1},\quad \tau\le t\le T,$$
hence \eqref{estiminfwmonot}.
\end{proof}

\section{Type I blow-up: Proof of Proposition~\ref{propA1}.} \label{SecTypeI}
\label{sectype1}

In what follows, we denote by $Z_{[a,b]}\left( f\right)$ the 
 zero number of a function $f$ in the interval $[a,b]$.
The proof is based on zero number arguments, 
following ideas from \cite{MM04,CFG,MS2}.
To this end we introduce the normalized regular steady state $W_1$ of \eqref{e1}, namely the solution of
\be{eqW1}
\begin{cases}
W_1''+\frac{n+1}{r}W_1'+nW_1^2+rW_1W_1'=0,\quad r>0,\\
W_1(0)=1,\quad W_1'(0)=0,
\end{cases}
\ee
which exists globally and satisfies $W_1>0$ and $W'_1\le 0$ (see Remark~\ref{Proof-lemA1B}). 
We also denote $W_*(r):=2/r^2$ ($r>0$), 
which is a singular solution of the ODE in \eqref{eqW1}. 
The proof of Proposition~\ref{propA1} relies on the following three lemmas.
The first one, which concerns the intersection property of regular and singular steady-states of \eqref{eqW1}, was obtained in \cite[Lemma 2.4]{MS2}.

\begin{lem}\label{lemA1B}
For $3\le n\le 9$, $W_1-W_*$ has infinitely many (all nondegenerate) zeros in $[0,\infty)$. 
\end{lem}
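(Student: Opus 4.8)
The plan is to reduce the ODE in \eqref{eqW1} to an \emph{autonomous} second-order equation, via the scaling that makes $W_*$ a constant, and then read off the intersection property from a linearization. Concretely, set $t=\log r$ and $\rho(t)=r^{2}W_1(r)$, so that $W_*$ corresponds to the constant $\rho\equiv 2$ and the zeros of $W_1-W_*$ in $(0,\infty)$ are exactly the zeros of $g(t):=\rho(t)-2$ on $\R$. A direct computation transforms \eqref{eqW1} into
$$\rho''+(n-4)\rho'+(4-2n)\rho+(n-2)\rho^{2}+\rho\rho'=0,$$
an autonomous planar system in $(\rho,\rho')$ with equilibria $(0,0)$ and $(2,0)$. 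Since $W_1(0)=1$, $W_1'(0)=0$, one has $(\rho,\rho')\to(0,0)$ as $t\to-\infty$, so the orbit of $W_1$ is a branch of the unstable manifold of the saddle $(0,0)$ (whose linearization has eigenvalues $2$ and $2-n$).

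The crucial input — and what I expect to be the main obstacle — is the asymptotics
$$\lim_{r\to\infty}r^{2}W_1(r)=2,\qquad\text{equivalently }g(t)\to0\ \text{(hence }g'(t),g''(t)\to0\text{) as }t\to\infty.$$
I would obtain this by a global phase-plane analysis: the orbit is bounded (from $0<W_1\le1$ and $W_1'\le0$, together with the identity $\big(r^{n+1}W_1'+\tfrac{1}{2}r^{n+2}W_1^{2}\big)'=-\tfrac{n-2}{2}r^{n+1}W_1^{2}\le0$, which also rules out $W_1\to\ell_0>0$); for $n\ge4$ the functional $E(t)=\tfrac{1}{2}\rho'^{2}-(n-2)\big(\rho^{2}-\tfrac{1}{3}\rho^{3}\big)$ is nonincreasing along the flow as long as $\rho>0$, so LaSalle's invariance principle forces $(\rho,\rho')$ to converge to an equilibrium, and it cannot be $(0,0)$ since the orbit emanated from there. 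The case $n=3$ needs a minor extra argument near $\rho=0$, where the damping coefficient $(n-4)+\rho$ changes sign. (Alternatively, this asymptotics may simply be quoted from \cite{MS2} or from earlier work on radial steady states.)

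Granting $g\to0$, I would then linearize. Writing \eqref{eqW1} in the form $g''+\big[(n-2)+g\big]g'+(n-2)(2+g)g=0$ and removing the first-order term via $g=\exp\big(-\tfrac{1}{2}\int_{t_0}^{t}[(n-2)+g]\big)h$, one obtains $h''+q(t)h=0$ with $q(t)\to\tfrac{1}{4}(n-2)(10-n)>0$ for $3\le n\le9$. Since $W_1$ is regular at $0$ while $W_*$ is not, $g\not\equiv0$, hence $h\not\equiv0$; as $q(t)$ stays bounded below by a positive constant for $t$ large, Sturm comparison with a constant-coefficient oscillator shows that $h$ — hence $g$, hence $W_1-W_*$ — has infinitely many zeros accumulating at $r=+\infty$. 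Thus $W_1-W_*$ has infinitely many zeros in $[0,\infty)$.

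Finally, every such zero $r_0>0$ is nondegenerate: both $W_1$ and $W_*$ solve the second-order ODE in \eqref{eqW1}, which is regular (locally Lipschitz in $(W,W')$) on compact subsets of $(0,\infty)$; so if $W_1(r_0)=W_*(r_0)$ and $W_1'(r_0)=W_*'(r_0)$, then $W_1\equiv W_*$ by ODE uniqueness, contradicting $W_1(0^+)=1\neq\infty=W_*(0^+)$. Hence $(W_1-W_*)'(r_0)\neq0$ at each zero, which is exactly what the subsequent zero-number arguments require. I note that the restriction $3\le n\le9$ enters precisely through the negativity of the discriminant $(n-2)(n-10)$ of the characteristic polynomial $\lambda^{2}+(n-2)\lambda+2(n-2)$ of the linearization at $\rho=2$: for $n\ge10$ the roots are real, $W_1$ approaches $W_*$ without oscillation, and only finitely many intersections occur.
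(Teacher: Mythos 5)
Your attempt is a genuinely different route from the paper's: the paper does not prove Lemma~\ref{lemA1B} at all, but simply cites \cite[Lemma~2.4]{MS2}, supplementing it in Remark~\ref{Proof-lemA1B} with the auxiliary facts that $W_1$ is global, positive, and nonincreasing. You instead aim for a self-contained proof. Most of it is sound: the change of variables $t=\log r$, $\rho=r^2W_1$ and the resulting autonomous ODE $\rho''+(n-4)\rho'+(n-2)\rho^2-2(n-2)\rho+\rho\rho'=0$ are computed correctly; the equilibria and the eigenvalues $2$, $2-n$ at the saddle $(0,0)$ are right; the bound $\rho<4$ from the monotone quantity $r^{n+1}W_1'+\tfrac12 r^{n+2}W_1^2$ is correct; the Liouville reduction gives $q(t)\to\tfrac14(n-2)(10-n)$, so oscillation for $3\le n\le 9$ does follow once $g,g'\to 0$; and the nondegeneracy argument by ODE uniqueness on $(0,\infty)$ is exactly right. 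You have also correctly identified where the dimensional restriction enters.

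The genuine gap is the one you flag yourself: the convergence $\rho(t)\to 2$ in the case $n=3$. Your LaSalle argument uses $E'=-(n-4+\rho)\rho'^2$, which is sign-definite on $\{\rho>0\}$ only for $n\ge 4$; for $n=3$ it reads $E'=-(\rho-1)\rho'^2$ and the orbit certainly visits $\{\rho<1\}$ near $t=-\infty$, so $E$ is not monotone and the argument breaks down. Note that the paper's Lemma~\ref{lemA0Rn1} handles $n=3$ only to the extent of proving \emph{boundedness} of $\rho$ (via the substitution $h=W'+W$ and the functional $H=\tfrac12(W-2)^2+h-2\log h$), which is weaker than the convergence $\rho\to 2$ you need. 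One could try to push a LaSalle argument with $H$ (its derivative $-(W-2)^2$ is sign-definite, and the only invariant set in $\{W=2\}$ is the equilibrium), but this would require checking that $h>0$ persists along the trajectory and that $H$ is proper on the relevant region, neither of which is immediate from what you wrote. Calling this a ``minor extra argument'' undersells it; as written it is a hole. Citing the limit from \cite{MS2} is the safe fallback, and is in fact what the paper does for the entire lemma.

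Two smaller points. First, you assert $g'\to 0$ (needed for $q(t)\to\tfrac14(n-2)(10-n)$, since $q$ involves $p'=g'$); this does follow, but only because the LaSalle conclusion gives convergence of the full phase point $(\rho,\rho')$ to $(2,0)$, not merely pointwise $\rho\to 2$ — worth saying explicitly. Second, when you conclude from Sturm comparison that $h$ (hence $g$) has infinitely many zeros, you should add the observation that zeros of $g$ and $h$ coincide because the Liouville factor $\exp(-\tfrac12\int p)$ never vanishes; this is immediate but should be stated.
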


\begin{lem}\label{lemA0}
Assume that $u_0$ satisfies \eqref{i0}, \eqref{hyp13.1b} and $T(u_0)<\infty$.
There exist $N\in \N$, $R_0, R_1\in(0,\infty)$ and $t_0,t_1\in(0,T)$, such that 
\be{SpropMatanoMerleMonotone}
w_t(0,t)>0,\ t\in [t_0,T),\qquad Z(t):=Z_{[0,R_0]}(w_t(\cdot,t))\ \hbox{is constant on $[t_0,T)$,}
\ee
and 
\be{lemA1}
Z_{[0,R_1]}\left(w(\cdot,t)-W_*\right)\le N\quad \hbox{for all } t\in[t_1,T).
\ee
\end{lem}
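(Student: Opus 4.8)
\textbf{Proof proposal for Lemma~\ref{lemA0}.}

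The plan is to establish the three conclusions in two stages: first the zero-number-in-time statement \eqref{SpropMatanoMerleMonotone}, then the zero-number-in-space bound \eqref{lemA1}. Both rest on the fact that $w$ solves the scalar parabolic equation \eqref{e1} (or its $(n+2)$-dimensional avatar \eqref{highereq}), so that any difference of two solutions satisfies a linear parabolic equation and the zero number is nonincreasing in time and drops strictly at a degenerate zero.

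For \eqref{SpropMatanoMerleMonotone}: I would note that $w_t$ itself solves the linearized equation $(w_t)_t-(w_t)_{rr}-\frac{n+1}{r}(w_t)_r = 2nw\,w_t + r(w_t w_r + w\,(w_t)_r)$, a linear parabolic equation in $w_t$ with bounded coefficients on any fixed cylinder $[0,R_0]\times[t_0,T)$ away from the blow-up (here assumption \eqref{hyp13.1b}, i.e.\ $B(u_0)\ne\R^n$, is crucial: by Remark~\ref{remopen}(i) it forces $B(u_0)=\{0\}$, so $w$ and its derivatives stay bounded on $[r_*,R_0]$ for any $r_*>0$, and one gets parabolic control up to $r=0$ by the regularity of radial solutions). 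Then $Z(t)=Z_{[0,R_0]}(w_t(\cdot,t))$ is finite for $t$ close to $T$, nonincreasing, and integer-valued, hence eventually constant on some $[t_0,T)$. The sign property $w_t(0,t)>0$ should come from the monotonicity of the averaged mass at the origin: one integrates $m'(t)=w_t(0,t)\le n m^2(t)$ the other way, or more directly uses that $w(0,t)=m(t)\to\infty$ together with a strong-maximum-principle / Hopf-type argument on $w_t$ at $r=0$ (using that $w_t\ge 0$ somewhere and cannot vanish identically near the origin as $t\to T$ since $m\to\infty$). If necessary one first shifts $t_0$ so that $w_t(0,t)$ has already settled to a constant sign, which is consistent because $Z$ being constant prevents the zero set of $w_t$ from crossing $r=0$.

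For \eqref{lemA1}: here I would compare $w$ with the singular steady state $W_*(r)=2/r^2$ of the ODE in \eqref{eqW1}. Since $W_*$ is a (stationary, singular) solution of \eqref{e1}, the difference $w(\cdot,t)-W_*$ satisfies a linear parabolic equation on $(0,R_1)\times(\tau,T)$ (bounded coefficients away from $r=0$), and the zero number $Z_{[0,R_1]}(w(\cdot,t)-W_*)$ is therefore nonincreasing in $t$. The point is to rule out zeros entering from the endpoints: near $r=0$, $W_*$ is much bigger than the bounded $w$ in the region $r\ll\sqrt{T-t}$ — or one works on a fixed small interval $[r_*,R_1]$ where $w$ is bounded and $W_*$ is bounded, so the boundary values $w(r_*,t)-W_*(r_*)$ and $w(R_1,t)-W_*(R_1)$ stay bounded and one invokes the standard fact that the zero number on a bounded interval of a solution of a linear parabolic equation with bounded coefficients, whose boundary traces do not vanish, is nonincreasing; then treat the piece $[0,r_*]$ separately using that $w$ is bounded while $W_*\to\infty$, so $w-W_*<0$ there for $r$ small and contributes no zeros. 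Taking $N$ to be the (finite) value of the zero number at any fixed time $t_1$ close to $T$ gives \eqref{lemA1}.

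The main obstacle I anticipate is the behavior at $r=0$: the coefficient $\frac{n+1}{r}$ in \eqref{e1} is singular there, and $W_*$ itself blows up at the origin, so the usual Sturm/zero-number machinery does not apply verbatim on an interval containing $0$. The clean way around this, which I would use, is to regard $w$ as a function of $n+2$ space variables solving the \emph{regular} equation \eqref{highereq} on a full ball $\tilde B_{R_0}\subset\R^{n+2}$ — then $w_t$ solves a genuinely nonsingular linear parabolic equation on $\tilde B_{R_0}$ and the radial zero number equals the $(n+2)$-dimensional nodal count, to which Sturm-type results (e.g.\ as in \cite{quittner_souplet}) apply directly; for the comparison with $W_*$ one restricts to an annulus or small ball and uses the one-sided sign of $w-W_*$ near the origin as above. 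The secondary subtlety is obtaining uniform-in-$t$ parabolic bounds on $w$ near the blow-up time away from the origin, but this is exactly what $B(u_0)=\{0\}$ buys, via standard interior parabolic estimates.
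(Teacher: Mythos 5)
Your outline captures the right skeleton — viewing $w_t$ and $w-W_*$ as solutions of linear parabolic equations, using the $(n+2)$-dimensional form \eqref{highereq} to avoid the singular coefficient at the origin, and controlling the origin endpoint via $w-W_*<0$ for $r$ small. The treatment of the case $\Omega=B_R$ and the conclusion $w_t(0,t)>0$ from the eventual constancy of $Z$ together with $w(0,t)\to\infty$ are essentially the arguments in the paper.

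However, there is a genuine gap in the case $\Omega=\R^n$, and it is precisely the hard part of the lemma. When you say one works on $[r_*,R_1]$ (or $[0,R_0]$) and invokes the zero-number machinery ``whose boundary traces do not vanish,'' you assert the sign-definiteness of the trace at the outer endpoint without justifying it. For $\Omega=B_R$ the outer boundary is fine: $w(R,t)\equiv\mu$ gives $w_t(R,t)\equiv 0$ and $w(R,t)-W_*(R)\equiv$ const. But for $\Omega=\R^n$ there is no outer boundary condition, and one must \emph{prove} that some $R_0$, $R_1$ exist with $w_t(R_0,t)\ne 0$ and $w(R_1,t)-W_*(R_1)\ne 0$ on a one-sided neighborhood of $T$; nothing in your argument rules out that, say, $w_t(\rho,T)=0$ for \emph{every} $\rho$. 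This is exactly what the paper's claim \eqref{ClaimNotequiv} supplies: $w(\cdot,T)$ cannot coincide with any radial steady state $V$ of \eqref{decayODE0} (in particular with $W_*$, nor can $w_t(\cdot,T)\equiv 0$), proved via backward uniqueness for parabolic equations on exterior domains (Proposition~\ref{backuniq}) applied to $z=w-V$ in the $(n+2)$-dimensional variables, together with Lemma~\ref{lemA0Rn1}, whose role is to guarantee the a priori decay $V(r)\le Cr^{-2}$ so that the coefficient $rV$ in the linearized equation is bounded on exterior domains — otherwise backward uniqueness does not apply. Once $w(\cdot,T)\not\equiv V$ is known, continuity produces the desired sign-definite outer radius, and the rest of your plan goes through. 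Without this ingredient, the zero-number argument on $\R^n$ cannot be closed, so you should add the backward-uniqueness step (and the decay lemma that makes it applicable) before invoking the Sturm machinery.
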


\begin{lem}\label{propA1app}
Assume that $u_0$ satisfies \eqref{i0}, \eqref{hyp13.1b}, $T(u_0)<\infty$
and set $m(t)=w(0,t)$.
If
\be{SpropMatanoMerleCase1}
\limsup_{t\to T} \ (T-t)m(t)=\infty,
\ee
then there exists $t_j\to T$ such that 
\be{cvgappendix}
\frac{w(r/\sqrt{m(t_j)},t_j)}{m(t_j)}\longrightarrow W_1(r),\quad\hbox{uniformly for $r\ge 0$ bounded.}
\ee
\end{lem}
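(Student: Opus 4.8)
The plan is to zoom in on $w$ around a well-chosen sequence of times and to show that the rescaled functions converge to the regular steady state $W_1$. By \eqref{SpropMatanoMerleCase1} and the monotonicity $w_t(0,\cdot)>0$ near $T$ from Lemma~\ref{lemA0}, I first pick $t_j\to T$ with $a_j:=(T-t_j)m(t_j)\to\infty$; it is convenient to take $t_j$ as the first time at which $(T-t)m(t)$ reaches a prescribed large value (so that also $(T-t)m(t)\le a_j$ on $[t_0,t_j]$) and, whenever possible, among local maxima of $t\mapsto(T-t)m(t)$. Setting $\lambda_j:=m(t_j)^{-1/2}\to 0$ I define
$$W_j(r,s):=\lambda_j^2\,w\bigl(\lambda_j r,\ t_j+\lambda_j^2 s\bigr),$$
which, by the scale invariance of \eqref{e1}, solves the same equation on the expanding cylinders $\bigl(0,R\sqrt{m(t_j)}\bigr)\times\bigl(-t_jm(t_j),a_j\bigr)$, with $W_j(0,0)=1$, $\partial_r W_j\le 0$, and rescaled density $U_j:=nW_j+r\partial_r W_j=\lambda_j^2u(\lambda_j r,\cdot)\ge 0$ (cf.\ \eqref{reluw}); in the formulation \eqref{highereq} with $n+2$ space variables this reads $\partial_s W_j-\tilde\Delta W_j=U_jW_j$, which handles the regularity at the origin.

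Second, I would prove uniform bounds and pass to the limit. Since $m$ is nondecreasing near $T$, one has $W_j(r,s)\le\lambda_j^2m(t_j)=1$ for $s\le 0$ and $j$ large; and integrating the inequality $m'(t)=w_t(0,t)\le nm^2(t)$ (which follows from \eqref{e1} at $r=0$, since the origin is a spatial maximum of $w(\cdot,t)$) forward in time gives $W_j(r,s)\le(1-ns)^{-1}$ for $0\le s<1/n$. Hence, for a fixed small $\delta>0$, $0\le W_j\le 2$ and $0\le U_j\le 2n$ on $\R^{n+2}\times(-\infty,\delta]$ for all large $j$. Interior parabolic $L^q$ and Schauder estimates applied to $\partial_s W_j-\tilde\Delta W_j=U_jW_j$ (bounded right-hand side), followed by a short bootstrap and a diagonal extraction, produce a subsequence along which $W_j\to W_\infty$ in $C^{2,1}_{loc}(\R^{n+2}\times(-\infty,\delta))$, where $W_\infty$ is a bounded, nonnegative, radially nonincreasing solution of \eqref{e1}--\eqref{highereq} with $W_\infty(0,0)=1$ and $0\le W_\infty\le 1$ for $s\le 0$.

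Third, and this is the main obstacle, I must show that $W_\infty$ is $s$-independent, i.e.\ a bounded, radially nonincreasing, regular steady state of \eqref{e1} with $W_\infty(0)=1$; granting this, the uniqueness for the singular initial value problem \eqref{eqW1} (see Remark~\ref{Proof-lemA1B}) forces $W_\infty\equiv W_1$, so that $W_j(\cdot,0)\to W_1$ locally uniformly, which, unravelling the scaling, is precisely \eqref{cvgappendix}. To see that $P:=\partial_s W_\infty\equiv 0$, note that $P$ solves the linearization of \eqref{highereq}, that $P(0,s)=\lim_j\lambda_j^4m'(t_j+\lambda_j^2 s)\ge 0$, and — by the choice of $t_j$ among local maxima of $(T-t)m(t)$, since then $\lambda_j^4m'(t_j)=a_j^{-1}\to 0$ — that $P(0,0)=0$. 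On the other hand, by Lemma~\ref{lemA0} the quantity $Z_{[0,R_0]}(w_t(\cdot,t))$ is constant near $T$, which by scale invariance of the zero number transfers to a uniform bound on the number of sign changes of $P(\cdot,s)$; combining this zero-number rigidity with $P(0,s)\ge 0$, $P(0,0)=0$, and the fact that $0$ is the spatial maximum of $W_\infty(\cdot,s)$, one deduces (via a strong maximum/minimum principle for $P$, after removing its zeroth-order term, and then backward uniqueness) that $P\equiv 0$ on $\R^{n+2}\times(-\infty,\delta)$. The delicate points I expect to fight with are exactly this rigidity of the limit — ruling out a nontrivial ancient limit of $w_t$ under the mere hypothesis \eqref{SpropMatanoMerleCase1} — and the treatment of the case where $(T-t)m(t)$ admits no usable local maxima near $T$, in which case it is eventually monotone, yielding the extra control on $W_\infty(0,\cdot)$ needed to run the same argument.
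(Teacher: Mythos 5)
Your overall strategy matches the paper's: rescale $w$ around a sequence $t_j\to T$ with $m'(t_j)/m^2(t_j)\to 0$, extract a limit on expanding cylinders using the forward bound from $m'\le nm^2$ and the backward bound from $m'\ge 0$, show the limit is $s$-independent, and identify it with $W_1$ via the normalization $W_\infty(0)=1$, $\partial_rW_\infty(0)=0$ and ODE uniqueness. The rescaling, the bounds, the compactness step and the final unwinding to \eqref{cvgappendix} are all correct and coincide with the paper's. However, the two points you flag as delicate are genuine gaps and neither is resolved.

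The choice of $t_j$ via local maxima of $g(t)=(T-t)m(t)$ fails in exactly the case you worry about: if $g$ is eventually monotone increasing (which is compatible with $\limsup g=\infty$), there are no usable local maxima, and on increasing branches one only gets $m'(t)/m^2(t)\ge g(t)^{-1}$, the wrong direction. The paper avoids any case distinction by arguing by contradiction: if no sequence with $w_t(0,t_j)/w^2(0,t_j)\to 0$ existed, then $m'\ge c\,m^2$ near $T$ (here one uses $w_t(0,\cdot)>0$ from Lemma~\ref{lemA0}), which integrates to $(T-t)m(t)\le 1/c$, contradicting \eqref{SpropMatanoMerleCase1}. This is both simpler and complete.

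The more serious gap is the rigidity step. Your proposed proof that $P:=\partial_s W_\infty\equiv 0$ — a strong maximum/minimum principle ``after removing the zeroth-order term,'' followed by backward uniqueness — does not work as stated: the sign information $P\ge 0$ holds only on the axis $\{r=0\}$, not in any full space-time neighborhood, so no strong minimum principle can be invoked; and $P$ vanishes identically on no time slice, so backward uniqueness has nothing to propagate. The mechanism the paper uses is a zero-number \emph{drop} and its transfer back to $w_t$. Suppose $P(\cdot,0)\not\equiv 0$; since $P(0,0)=P_r(0,0)=0$, the function $P(\cdot,0)$ has a degenerate zero at $r=0$, so there exist $A>0$ and $s_1<0<s_2$ with $P(A,s)\ne 0$ for $s\in[s_1,s_2]$, $P(\cdot,s_i)$ having only simple zeros on $[0,A]$, and $Z_{[0,A]}(P(\cdot,s_1))\ge Z_{[0,A]}(P(\cdot,s_2))+1$. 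Simple zeros are stable under the $C^1_{loc}$ convergence $\partial_sW_j\to P$, so the same strict inequality holds for $\partial_s W_j$ for $j$ large, hence, after unscaling, for $w_t(\cdot,t_j+\lambda_j^2 s_i)$ counted on $[0,A\lambda_j]$. Since $w_t(A\lambda_j,\cdot)\ne 0$ on the relevant time interval, the zero number of $w_t$ on $[A\lambda_j,R_0]$ is nonincreasing there, and adding the two pieces forces $Z(t_j+\lambda_j^2s_1)>Z(t_j+\lambda_j^2s_2)$, contradicting the constancy of $Z$ from Lemma~\ref{lemA0}. This intersection-comparison contradiction, not a maximum principle argument, is the engine of the proof and is the piece you would need to supply to make the proposal complete.
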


Let us temporarily admit Lemmas~\ref{lemA0} and \ref{propA1app} and give the:

\begin{proof}[Proof of Proposition~\ref{propA1}] 
Assume for contradiction that, under the assumptions of the proposition, $u$ is a type II solution of (\ref{0}).
Since $u\le nw$ by \eqref{reluw}, $w$ thus satisfies \eqref{SpropMatanoMerleCase1}. 
 Let $t_1,N$ and $R_1$ be as in Lemma~\ref{lemA0} and $(t_j)_{j\in \N}$ and $m(t_j)$ are as in Lemma~\ref{propA1app} such that $t_j\ge t_1$ for all $j$.
 From Lemma~\ref{lemA1B}, there exists 
 $R_2>0$ such that $W_1-W_*$ has at least $N+1$ nondegenerate zeros on $[0,R_2]$. 
 Therefore, using the scaling property of $W_*$ and Lemma~\ref{propA1app}, 
 we obtain 
$$
\begin{aligned}
Z_{[0,R_1]}\left(w(r,t_j)-W_*(r)\right)&=Z_{\big[0,R_1\sqrt{m(t_j)}\big]}\left( \frac{w(r/\sqrt{m(t_j)},t_j)}{m(t_j)}-\frac{W_*(r/\sqrt{m(t_j)})}{m(t_j)}\right)\\
&=Z_{\big[0,R_1\sqrt{m(t_j)}\big]}\left( \frac{w(r/\sqrt{m(t_j)},t_j)}{m(t_j)}-W_*(r)\right).
\end{aligned}
$$
Choosing $j$ large enough such that $R_1\sqrt{m(t_j)}>R_2$ and using $\frac{w(r/\sqrt{m(t_j)},t_j)}{m(t_j)}\to W_1(r)$ as $j\to \infty$, we have 
$$
Z_{[0,R_1]}\left( w(r,t_j)-W_*(r)\right)\ge N+1\quad \hbox{ for large } j.
$$
This contradicts \eqref{lemA1} and concludes the proof of Proposition~\ref{propA1}.
\end{proof}

\begin{rem}\label{Proof-lemA1B}
Lemma~\ref{lemA1B} is proved in \cite{MS2} under the assumption that $W_1$ is globally defined and positive
(which is sufficient for their needs).
The latter property is actually true, as well as $W'_1\le 0$. Indeed, denote by $\bar r$ the maximal existence time of $W_1$.
Setting $Z=nW_1+rW_1'$, a direct computation yields $Z'=-rW_1Z$, with $Z(0)=n>0$, hence $Z>0$ on $J=(0,\bar r)$. 
Therefore, $(r^nW_1)'\ge 0$, hence $W_1>0$ on~$J$.
Rewriting the ODE in \eqref{eqW1} as $(r^{n+1}W_1')'=-r^{n+1}W_1Z$, we get
 $W'_1\le 0$, hence $W_1$ is bounded on~$J$.
It follows that $\bar r=\infty$ since otherwise, by \eqref{eqW1},
$W_1'$ would be bounded on~$J$: a contradiction.
\end{rem}

The proof of Lemma~\ref{lemA0} will require two additional ingredients
for the case $\Omega=\R^n$.
The first one, as in \cite{MS2}, is a backward uniqueness property 
for linear parabolic equations in exterior domains (cf.~\cite[Theorem 1]{ESS-2003}).

\begin{prop}\label{backuniq}
Let $Q_{\rho,T}:=\left(\R^d \setminus B_\rho\right)\times (0,T]$. Assume that $u$ satisfies
$$
|u_t-\Delta u|\le A\left(|u|+|\nabla u|\right),\quad |u(x,t)|\le A\exp(A|x|^2),\quad \hbox{in } Q_{\rho,T},
$$ 
for some constant $A>0$. If $u(\cdot,T)\equiv 0$, then $u\equiv0$ in $Q_{\rho,T}$.
\end{prop}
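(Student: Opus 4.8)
The plan is to derive Proposition~\ref{backuniq} from the backward uniqueness theorem \cite[Theorem~1]{ESS-2003}, which applies precisely to differential inequalities of the form $|u_t-\Delta u|\le A(|u|+|\nabla u|)$ in an exterior space-time cylinder under the growth condition $|u(x,t)|\le A\exp(A|x|^2)$, and concludes $u\equiv 0$ whenever $u(\cdot,T)\equiv 0$. After a translation and a parabolic rescaling we may assume $T=1$. Fix $\rho'>\rho$ (to work strictly in the interior). By interior parabolic regularity, $u$ is a classical solution of its differential inequality in $(\R^d\setminus B_{\rho'})\times(0,1]$ and still satisfies the growth bound there; since $u(\cdot,1)\equiv 0$, \cite[Theorem~1]{ESS-2003} applied on $\R^d\setminus B_{\rho'}$ gives $u\equiv 0$ on that cylinder, and letting $\rho'\downarrow\rho$ yields the claim on $Q_{\rho,T}$.

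For completeness I would recall the structure of the proof of the cited theorem, which rests on two Carleman-type inequalities for the heat operator on an exterior domain, applied to the pair $(u,\nabla u)$ (which satisfies a system of the same form). In the first, one uses a Gaussian, heat-kernel-like weight of the form $(t+\lambda)^{-a}\exp\bigl(-|x|^2/(8(t+\lambda))\bigr)$, multiplies $(u,\nabla u)$ by suitable space-time cut-offs, and lets the Carleman parameter $a\to\infty$; combining the bound $|u|+|\nabla u|\le A'\exp(A'|x|^2)$ with $u(\cdot,1)\equiv 0$, one deduces that $u$ vanishes to infinite order at spatial infinity near the final time, namely there is $\delta>0$ with $\int_{(\R^d\setminus B_{\rho'})\times(1-\delta,1)}\bigl(|u|^2+|\nabla u|^2\bigr)e^{\mu|x|^2}\,dx\,dt<\infty$ for every $\mu>0$. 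The second inequality is a logarithmic-convexity (frequency-function) estimate whose only ``datum at infinity'' is precisely such super-Gaussian decay; feeding the output of the first step into it closes the estimate and forces $u\equiv 0$ near the final time, after which a finite iteration in $t$ covers the whole interval and gives $u\equiv 0$ on the exterior cylinder.

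The main obstacle is the derivation of the two Carleman inequalities with the correct calibration of the power of $t+\lambda$ against the Gaussian exponent, the exterior geometry making both the boundary terms near $\{|x|=\rho'\}$ and the terms near spatial infinity delicate to absorb into the left-hand side; since these estimates are available in \cite{ESS-2003}, in the present setting it only remains to carry out the reduction and bookkeeping described above.
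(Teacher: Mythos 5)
Your proposal is correct and matches the paper's approach: the paper does not prove Proposition~\ref{backuniq} at all, but simply cites \cite[Theorem~1]{ESS-2003}, which is exactly this statement for exterior space-time cylinders under the same differential inequality and Gaussian growth hypothesis. Your reduction to $T=1$ and the $\rho'>\rho$ shrinking are harmless but unnecessary, and the sketch of the two Carleman inequalities is extra context beyond what the paper records.
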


 The second one is a decay property for positive solutions of the ODE in \eqref{eqW1}.
It will be used in the argument below, which enables us to avoid the decay assumption made on $u_0$
in the analytic proof from~\cite{MS2}. 

\begin{lem}\label{lemA0Rn1}
Let $V\in C^2(0,\infty)$ be a solution of 
\be{decayODE0}
V''+\frac{n+1}{r}V'+nV^2+rVV'=0,\quad V\ge 0,\quad r\in(0,\infty). 
\ee
Then for any $\eps>0$, there exists $C=C(\eps)>0$ such that 
\be{decayODE}
V(r)\le Cr^{-2}\quad \hbox{in}\ \ (\eps,\infty).
\ee
\end{lem}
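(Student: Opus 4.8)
The plan is to exploit the "almost divergence" structure of the ODE, already used in Remark~\ref{Proof-lemA1B}. Set $Z=nV+rV'$; then the equation \eqref{decayODE0} can be rewritten as the pair of relations
$$
(r^nV)'=r^{n-1}Z,\qquad (r^{n+1}V')'=-r^{n+1}VZ,
$$
and a direct differentiation gives $Z'=-rVZ$. Since $V\ge 0$, the quantity $r\mapsto Z(r)$ has nonincreasing modulus along the flow; more precisely $Z(r)=Z(r_0)\exp\bigl(-\int_{r_0}^r sV(s)\,ds\bigr)$ for any fixed $r_0>0$. Thus $Z$ has a constant sign on $(\eps,\infty)$ (it cannot vanish), and $|Z|$ is bounded by $|Z(\eps)|$ there.

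First I would rule out that $Z>0$ on all of $(\eps,\infty)$. If $Z\ge 0$ then $(r^{n+1}V')'=-r^{n+1}VZ\le 0$, so $r^{n+1}V'$ is nonincreasing; combined with $V\ge 0$ this forces $V$ to be eventually monotone, and one checks that $V\ge 0$ together with $Z=nV+rV'>0$ makes $(r^nV)'>0$, so $r^nV$ is increasing, while at the same time $V$ being bounded below by a positive constant would contradict, via $(r^{n+1}V')'\le -c\,r^{n+1}V$, the nonnegativity of $V'$ for large $r$ — yielding a blow-down/sign contradiction. Hence $Z\le 0$ on $(\eps,\infty)$, i.e.
$$
nV+rV'\le 0,\qquad\hbox{equivalently}\qquad (r^nV)'\le 0\quad\hbox{on }(\eps,\infty).
$$
Therefore $r^nV(r)\le \eps^nV(\eps)=:C_1$ for all $r>\eps$, which already gives $V(r)\le C_1 r^{-n}\le C_1 r^{-2}$ since $n\ge 3$. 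This is more than the claimed bound \eqref{decayODE}.

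The main obstacle I anticipate is the sign analysis of $Z$: the argument above sketched to exclude $Z>0$ needs to be made airtight, because a priori one only knows $V\ge 0$ and not, say, that $V$ does not decay to zero too fast or oscillate. The clean way is to argue directly from the monotone representation: since $Z(r)=Z(r_0)\exp(-\int_{r_0}^r sV)$ keeps its sign, if that sign were positive then $r^nV$ is strictly increasing, so $V(r)\ge c r^{-n}$ for large $r$, whence $\int^\infty sV(s)\,ds$ converges only if $n>2$ — which holds — but then $Z(r)\to Z_\infty>0$, forcing $rV'\to Z_\infty - nV$, and since $V\to 0$ along a subsequence is impossible ($r^nV$ increasing) one gets $V$ bounded below, contradicting $\int^\infty sV<\infty$. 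So necessarily $Z\le 0$, and the estimate follows as above. (An even softer route: integrate $(r^{n+1}V')'=-r^{n+1}VZ$ twice and use a Gronwall/comparison argument against the singular solution $W_*(r)=2r^{-2}$ of the same ODE, noting $W_*$ is a super- or subsolution on the relevant range; but the $Z$-monotonicity argument is the most direct.)
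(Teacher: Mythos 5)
Your proposal has a fatal flaw: the intermediate claim that $Z=nV+rV'\le 0$ on $(\eps,\infty)$ is false, and so is the resulting bound $V\le C r^{-n}$. Two concrete counterexamples, both central to the paper's argument: for the singular steady state $W_*(r)=2r^{-2}$ one computes $Z=2(n-2)r^{-2}>0$ for every $r>0$, and yet $W_*=O(r^{-2})$ holds (with equality) but $W_*\ne O(r^{-n})$; and for the regular steady state $W_1$ of \eqref{eqW1}, Remark~\ref{Proof-lemA1B} in the paper shows precisely that $Z(0)=n>0$ and $Z'=-rW_1Z$, hence $Z>0$ on all of $(0,\infty)$. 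Since the lemma must apply to $W_1$ and $W_*$, any argument that deduces $Z\le 0$ cannot be right.

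The specific step that fails is the attempt to exclude $Z>0$. You argue that $r^nV$ increasing implies $V$ is bounded below by a positive constant ("$V\to 0$ along a subsequence is impossible"), but this does not follow: $r^nV$ increasing only gives $V(r)\ge c\,r^{-n}$, which is perfectly compatible with $V(r)\to 0$ (take $V=r^{-2}$: then $r^nV=r^{n-2}$ is increasing for $n\ge 3$ while $V\to 0$). Once that step collapses, there is no contradiction, and indeed there should be none, since $Z>0$ really does occur. The case $Z>0$ cannot be ruled out; it must be handled directly, and in that case the naive integration of $(r^nV)'\le 0$ is unavailable.

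The paper's proof sidesteps the sign of $Z$ entirely. It passes to the logarithmic scale $s=\log r$ and the rescaled unknown $W(s)=r^2V(r)$, which turns \eqref{decayODE0} into the autonomous ODE $W''+(n-4+W)W'+(n-2)(W^2-2W)=0$; the desired bound $V\le C r^{-2}$ is then exactly the boundedness of $W$ on half-lines, which is proved via a Lyapunov functional ($E=\tfrac12(W')^2+(n-2)(\tfrac13 W^3-W^2)$ for $n\ge 4$, with a separate functional for $n=3$). If you want to salvage a first-order/monotonicity route, you would at least need to work with the quantity $r^2V$ rather than $r^nV$, and you would still have to control the case $Z>0$, for which the $Z$-ODE alone gives no useful upper bound on $V$.
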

\begin{proof}[Proof of Lemma~\ref{lemA0Rn1}]
Let $W(s)=r^{2}V(r)\ge 0$ with $s=\log r$. By direct computation,
 we have 
\be{eqODEW}
W''+(n-4+W)W'+(n-2)(W^2-2W)=0,\quad  s\in\R.
\ee
For $n\ge 4$, setting $E(s)=\frac12{W'}^2 +(n-2)\big(\frac13W^3-W^2\big)$,
we have 
$E'(s)=-(n-4+W){W'}^2\le 0$.
Consequently, for each $\sigma\in\R$, we have
$\sup_{s\ge \sigma}E(s)\le E(\sigma)$, hence 
$\sup_{s\ge \sigma}W(s)<\infty$, which yields the desired conclusion.

For $n=3$ we use a different energy functional.
Rewriting equation \eqref{eqODEW} as
$W''+W'+(W'+W)(W-2)=0$
and setting $h=W'+W$, we obtain $h'+(W-2)h=0$.
It follows that either: (i) $h\equiv 0$; (ii) $h<0$ on $\R$; or (iii) $h>0$ on $\R$.
In case (iii), setting $H=\frac12 (W-2)^2+h-2\log h$, we compute
$$H'= (W-2)W'+\frac{h'(h-2)}{h}
=(W-2)W'-(W-2)(W'+W-2)=-(W-2)^2\le 0.$$
Consequently, for each $\sigma\in\R$, we have
$\sup_{s\ge \sigma}H(s)\le H(\sigma)$, hence 
$M_\sigma:=\sup_{s\ge \sigma}h(s)<\infty$, and this remains obviously true 
in cases (i) and (ii).
Going back to the definition of $h$, we get
$\frac{d}{ds}[(W(s)-M_\sigma)e^{s}]\le 0$ on $[\sigma,\infty)$,
hence again $\sup_{s\ge \sigma}W(s)<\infty$.
\end{proof}

\begin{proof}[Proof of Lemma~\ref{lemA0}] 
$\bullet$ First note that, since $B(u)=\{0\}$ by assumption \eqref{hyp13.1b}
 and Remark~\ref{remopen}, $w$ extends by parabolic regularity to a function
$w\in C^{2,1}((\overline\Omega\setminus\{0\})\times (0,T])$ 
and the PDE in \eqref{e1}  is satisfied on $(\Omega\setminus\{0\})\times \{T\}$.
 We claim that:
\be{ClaimNotequiv}
\hbox{if $\Omega=\R^n$ and $V$ is a solution of \eqref{decayODE0}, then $w(\cdot,T)\not\equiv V$ in $\R^n\setminus\{0\}$.}
\ee
Assume for contradiction that $w(\cdot,T)\equiv V$ in $\R^n\setminus\{0\}$.
Recall the notation in \eqref{e1} and set $z(\tilde x,t):=w(\tilde x,t)-V(|\tilde x|)$
for $(\tilde x,t)\in\tilde Q=(\R^{n+2}\setminus\{0\})\times (0,T]$.
Using \eqref{reluw}, we have $(nw^2+rww_r)-(nV^2+rVV_r)=n(w+V)z+rw_rz+rVz_r=(u+nV)z+rVz_r$,
so that $z$ is a solution of 
\be{ClaimNotequiv2}
z_t-\tilde\Delta z=b(r,t)z_r+c(r,t)z,\quad (\tilde x,t)\in Q,\quad \hbox{ with $b=rV $ and $c=u+nV$}.
\ee
For each $\rho>0$, using Lemma~\ref{lemA0Rn1} and the fact that $B(u)=\{0\}$ and $w_r, u_r\le 0$, 
we see that $z,b,c\in L^\infty(Q_{\rho,T})$ where $Q_{\rho,T}:=(\R^{n+2}\setminus B_\rho)\times (0,T]$.
We may thus apply the backward uniqueness property in Proposition~\ref{backuniq} to deduce that $w\equiv V$, hence $w_t\equiv 0$, in $Q_{\rho,T}$.
Letting $\rho\to 0$, we deduce that $w_t\equiv 0$ in $\R^{n+2}\times (0,T)$: a contradiction with $T=T(u_0)<\infty$. 
This proves \eqref{ClaimNotequiv}.

\smallskip

$\bullet$  {\it Proof of \eqref{SpropMatanoMerleMonotone}.}  The function 
$U=w_t$ satisfies
\be{SMatanoMerleEqut}
U_t-U_{rr}=b(r,t)U_r+c(r,t)U,\quad \hbox{ with $b=\frac{n+1}{r}+rw $ and $c=2nw+rw_r$}.
\ee
First consider the case $\Omega=\R^n$. Then $U(\cdot,T)\not\equiv 0$.
Indeed since, as recalled at the beginning of the proof,  the PDE in  \eqref{e1} is satisfied on 
$(\R^n\setminus\{0\})\times \{T\}$,
$w_t(\cdot,T)\equiv 0$ would mean that $V(r):=w(r,T)$ 
is a solution of \eqref{decayODE0}, contradicting \eqref{ClaimNotequiv}.
Consequently, by continuity, there exist $R_0>0$ and $t_0\in(0,T)$ such 
that $U(R_0,t)\ne 0$ for all $t\in[t_0,T]$.
If $\Omega=B_R$, then we set $R_0=R$, $t_0=T/2$ and we have $U(R,t)=0$ for all $t\in[t_0,T)$ owing to the boundary conditions in \eqref{e1}.

 In either case,
the above allows to apply properties of the zero number 
on $[0,R_0]$ (cf.~\cite[Theorem 52.28 and Remark 52.29(ii)]{quittner_souplet},
  arguing on any time interval $[t_0,T_1)$ with $T_1<T$), and we deduce that
 the function $t\mapsto Z_{[0,R_0]}(U(\cdot,t))$ is (finite) integer-valued, nonincreasing on $[t_0,T)$,
  and drops at any time $t$ such that $U(\cdot,t)$ has a degenerate zero.
It follows that $Z_{[0,R_0]}(U(\cdot,t))$ is constant on $(\tilde t_0,T)$ for some $\tilde t_0\in (t_0,T)$.
Moreover, since $U_r(0,t)=0$, we have $U(0,t)=w_t(0,t)\ne 0$ for all $t$ close enough to $T$,
hence $w_t(0,t)>0$ owing to $\lim_{t\to T} w(0,t)=\infty$.

\smallskip

$\bullet$ {\it Proof of \eqref{lemA1}.}
By the computation leading to \eqref{ClaimNotequiv2},
$U(r,t):=w-W^*$ is a solution of \eqref{SMatanoMerleEqut} on 
$(0,R)\times(0,T)$ with $b=\frac{n+1}{r}+rw $ and $c=nW^*+u$. 
If~$\Omega=\R^n$, then $U(\cdot,T)\not\equiv 0$ owing to \eqref{ClaimNotequiv} hence, by continuity, there exist $R_1>0$ and $t_1\in(0,T)$ such 
that $U(R_1,t)\ne 0$ for all $t\in[t_1,T)$.
If~$\Omega=B_R$, then we set $R_1=R$, $t_1=T/2$ and we have 
$U(R_1,t)=C:=\mu-2R^{-2}$ 
for all $t\in[t_1,T)$ owing to the boundary conditions in \eqref{e1}.
On the other hand, for any fixed $\eps\in (0,T-t_1)$, since $\lim_{r\to 0} W^*(r)=\infty$, 
there exists $\eta_\eps\in(0,R_1)$ such that $U(r,t)<0$ for all $0<r\le \eta_\eps$ and $t_1\le t\le T-\eps$. 
Therefore, 
$Z_{[0,R_1]}(U(\cdot,t))=Z_{[\eta_\eps,R_1]}(U(\cdot,t))$ for all $t\in[t_1,T-\eps]$.
For each $\eps>0$, by the above, we may apply properties of the zero number (cf.~previous paragraph) 
to $U$
on $[\eta_\eps,R_1]\times(t_1,T-\eps)$, to infer that $t\mapsto Z_{[\eta_\eps,R_1]}(U(\cdot,t))$ is  finite and nonincreasing 
on $(t_1,T-\eps)$. Conclusion \eqref{lemA1} readily follows.
\end{proof}

\begin{rem} \label{remBwdU}
Thanks to Lemma~\ref{lemA0Rn1} and the argument following \eqref{SMatanoMerleEqut}, we do not need to apply backward uniqueness to the
equation \eqref{SMatanoMerleEqut} for $u_t$ which, like in \cite{MS2},
 would require the boundedness of the coefficient $rw$,
 leading to the additional assumption $u_0\le C(1+|x|)^{-1}$.
More precisely, we only need to apply backward uniqueness to the
equation for $w-V$ where $V$ is an arbitrary (radial) steady state on $(0,\infty)$,
which only requires the boundedness of the coefficient $rV$, 
and the latter is guaranteed by Lemma~\ref{lemA0Rn1}.
\end{rem}

With property \eqref{SpropMatanoMerleMonotone} at hand, 
Lemma~\ref{propA1app} can be proved by a similar argument as in \cite{CFG} 
(see also  \cite[Theorem 23.10]{quittner_souplet}).
We give the full proof for completeness and since the equation differs.

\begin{proof}[Proof of Lemma~\ref{propA1app}] 
By a time shift, we may assume that $t_0=0$ in Lemma~\ref{lemA0}.
Our assumptions imply the existence of a sequence $t_j\to T$ such that
\be{SpropMatanoMerleCFG}
\frac{w_t(0,t_j)}{ w^2(0,t_j)}\to 0
\ee
(otherwise we would have $w_t(0,t)\geq cw^2(0,t)$ as $t\to T$ for some $c>0$,
due to \eqref{SpropMatanoMerleMonotone}, which would contradict 
\eqref{SpropMatanoMerleCase1}).
Set $M_j=w(0,t_j)$. By comparing with the solution of the ODE
$\psi'=\psi^2$, $\psi(t_j)=M_j$,
we easily obtain the existence of $s^*>0$ such that
\be{SpropMatanoMerletildetj}
\tilde t_j:=t_j+s^* M_j^{-1}<T\quad\hbox{ and }\quad
w(0,t)\leq 2M_j,\quad t_j\leq t\leq \tilde t_j.
\ee
Let $\lambda_j=M_j^{-1/2}$ and define the rescaled solutions
$$v_j(y,s)=M_j^{-1}w(\lambda_jy,t_j+\lambda_j^2s)\qquad (y,s)\in D_j:=\big(0,R_0\lambda_j^{-1}\big)\times\big(-t_j\lambda_j^{-2},(T-t_j)\lambda_j^{-2}\big).$$
Then
$$\partial_s v_j-\partial_{yy} v_j-\ts\frac{n+1}{y}\partial_y v_j=nv_j^2+yv_j\partial_y v_j,\qquad (y,s)\in D_j$$
and, by \eqref{SpropMatanoMerletildetj} and \eqref{SpropMatanoMerleMonotone},
we have $0\leq v_j\leq 2$ in $\big(0,R_0\lambda_j^{-1}\big)\times(-t_j\lambda_j^{-2},s^*)$.
Moreover, $v_j(0,0)=1$ and $\partial_s v_j(0,0)\to 0$, due to \eqref{SpropMatanoMerleCFG}.
Let $D:=(0,\infty)\times(-\infty,s^*)$. By interior parabolic estimates, it follows that
(some subsequence of) $v_j$ converges in $C^{2+\alpha,1+\alpha/2}_{loc}(D)$ to a 
nonnegative solution of
$$
\begin{cases}
\partial_s v-\partial_{yy} v-\frac{n+1}{y}\partial_y v=nv^2+yv\partial_y v,\qquad (y,s)\in D,\\
v(0,0)=1,\ \partial_sv(0,0)=0.
\end{cases}
$$
Since $\partial_sv_j$ satisfies \eqref{SMatanoMerleEqut}, then we also have
\be{SMatanoMerleC1loc}
\partial_sv_j\to \partial_sv\quad\hbox{ in $C^{1,0}_{loc}(D)$.}
\ee

We shall now show that $\partial_sv(\cdot,0)\equiv 0$. If not, then
there exist $A>0$ and $\eps\in (0,s^*)$ such that
\be{SMatanoMerlevs}
\partial_sv(A,s)\neq 0,\quad |s|\leq\eps.
\ee
Since $\partial_sv(\cdot,0)$ has a degenerate zero at $r=0$, it follows
from properties of the zero number (cf.~\cite[Theorem 52.28]{quittner_souplet}) that the zero number of $\partial_sv$ on $[0,A]$
drops at $s=0$. Namely, we can fix
$-\eps<s_1<0<s_2<\eps$ such that $\partial_sv(\cdot,s_i)$ has only
simple zeros on $[0,A]$ and such that
$z_{[0,A]}\left(\partial_sv(\cdot,s_1)\right)\geq z_{[0,A]}\left(\partial_sv(\cdot,s_2)\right)+1$.
Owing to \eqref{SMatanoMerleC1loc}, we deduce that for $j$ large enough,
$z_{[0,A]}\left(\partial_sv_j(\cdot,s_1)\right)\geq z_{[0,A]}\left(\partial_sv_j(\cdot,s_2)\right)+1$,
hence
\be{SMatanoMerlecompz1}
z_{[0,A\lambda_j]}\left(w_t(\cdot,t_j+\lambda_j^2s_1)\right)
\geq z_{[0,A\lambda_j]}\left(w_t(\cdot,t_j+\lambda_j^2s_2)\right)+1.
\ee
On the other hand, \eqref{SMatanoMerlevs} implies  
$w_t(A\lambda_j,t_j+\lambda_j^2s)\neq 0$ for $|s|\leq\eps$ so that,
by the nonincreasing property of the zero number
(cf.~\cite[Remark 52.29.(ii)]{quittner_souplet})
\be{SMatanoMerlecompz2}
z_{[A\lambda_j,R_0]}\left(w_t(\cdot,t_j+\lambda_j^2s_1)\right)
\geq z_{[A\lambda_j,R_0]}\left(w_t(\cdot,t_j+\lambda_j^2s_2)\right).
\ee
We deduce, by \eqref{SMatanoMerlecompz1} and \eqref{SMatanoMerlecompz2},  that
$Z(t_j+\lambda_j^2s_1)\geq Z(t_j+\lambda_j^2s_2)+1$,
which contradicts \eqref{SpropMatanoMerleMonotone}.
It follows that $v_s(\cdot,0)\equiv 0$, hence
$v(\cdot,0)\equiv U_1$ due to $v(0,0)=1$, and the lemma follows.
\end{proof}

\section{Local convergence in similarity variables: Proof of Proposition~\ref{propA2}.}
\label{SecLocalCv}

The proof is based on modifications of the arguments in \cite{GMS} (see Section~\ref{ideas} for details). 
Set $R=\infty$ in case $\Omega=\R^n$. 
The rescaled $\phi(y,s)$ of $w$ by similarity variables, defined in
\eqref{VraieE0def}, is a global solution of \eqref{VraieE0}.
The type I assumption implies 
\be{wysM}
0<\phi(y,s)\le M,\quad y\in [0,Re^{s/2}),\ s\in (-\log T,\infty).
\ee
For any $\alpha\ge 0$, consider the initial value problem
associated to steady states of \eqref{VraieE0}, namely
\be{eqlimsta}
\begin{cases}
\varphi''+\ts\big(\frac{n+1}{y}-\frac12 y\big)\varphi'+n\varphi^2+y\varphi \varphi'-\varphi=0,\quad r>0,\\
\varphi(0)=\alpha\ge0,\quad \varphi'(0)=0,
\end{cases}
\ee
and denote by $y^*=y^*_\alpha$ the maximal existence time of its solution $\varphi=\varphi_\alpha$.
For $\alpha>0$, we define 
$$
r_\alpha:=\sup \{r\in (0,y^*_\alpha):\ \varphi_\alpha>0\ \hbox{on } (0,r)\}\quad \hbox{and }\  S:=\{\alpha>0:\ r_\alpha=\infty\}.
$$

The following properties of solutions of \eqref{eqlimsta} were proved in \cite[Proposition 3.1]{GMS}.

\begin{lem}\label{lemA3}
For all $\alpha\in S$, the limit $\Lambda_\alpha=\lim_{y\to \infty} y^2\varphi_\alpha(y)$ exists. Moreover, if $\alpha_1,\alpha_2\in S$ and $\alpha_1\neq\alpha_2$, then $\Lambda_{\alpha_1}\neq\Lambda_{\alpha_2}$.
\end{lem}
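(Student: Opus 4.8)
The plan is to establish this via a phase-plane / energy analysis of the autonomous equation satisfied by the rescaled unknown $W(s)=y^2\varphi_\alpha(y)$ with $s=\log y$, exactly as in the proof of Lemma~\ref{lemA0Rn1}. First I would substitute $\varphi_\alpha(y)=y^{-2}W(\log y)$ into \eqref{eqlimsta}; since the term $-\varphi$ is lower order relative to $y\varphi\varphi'$ and $\varphi''$, a direct computation shows $W$ satisfies a perturbation of \eqref{eqODEW}, namely an equation of the form $W''+(n-4+W)W'+(n-2)(W^2-2W)=e^{2s}W\cdot(\text{l.o.t.})$, or more precisely one can keep the $-\varphi$ term and work with the exact ODE for $W$. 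The point is that for $\alpha\in S$ (so $\varphi_\alpha>0$ on all of $(0,\infty)$) the quantity $W$ stays positive, and boundedness of $W$ as $s\to\infty$ — hence existence of $\limsup$ and $\liminf$ — follows from a monotone energy functional $E(s)$ along the lines of the one used in Lemma~\ref{lemA0Rn1}, split into the cases $n\ge4$ and $n=3$. Once $W$ is bounded and positive, one shows that it actually converges: the $\omega$-limit set in the $(W,W')$ phase plane is a compact connected invariant set for the limiting autonomous system (the lower-order/exponentially-decaying terms vanishing as $s\to\infty$), contained in $\{W\ge0\}$, and by LaSalle's invariance principle applied to $E$ it must consist of equilibria; since the relevant equilibria $W=0$ and $W=2(n-2)$... wait — here one must use that $\varphi_\alpha>0$ forces $W>0$, so the limit point is an isolated equilibrium of the limiting planar system, giving $W(s)\to\Lambda_\alpha$ for a well-defined $\Lambda_\alpha\in[0,\infty)$, i.e. $y^2\varphi_\alpha(y)\to\Lambda_\alpha$. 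This is the content of the first assertion, and it is essentially quoted from \cite{GMS}, so I would mainly cite \cite[Proposition~3.1]{GMS} and sketch the energy argument.

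For the injectivity statement $\alpha_1\ne\alpha_2\Rightarrow\Lambda_{\alpha_1}\ne\Lambda_{\alpha_2}$, I would argue by contradiction using the sign-changing / intersection structure of the difference of two solutions. Suppose $\alpha_1<\alpha_2$ with $\Lambda_{\alpha_1}=\Lambda_{\alpha_2}=\Lambda$, and set $\delta(y)=\varphi_{\alpha_2}(y)-\varphi_{\alpha_1}(y)$. At $y=0$ we have $\delta(0)=\alpha_2-\alpha_1>0$ and $\delta'(0)=0$; the function $\delta$ solves a linear second-order ODE obtained by subtracting the two copies of \eqref{eqlimsta}, with coefficients that are smooth on $(0,\infty)$ because both $\varphi_{\alpha_i}>0$ there; in particular $\delta$ cannot have a degenerate zero, so its zeros are isolated and simple, and between consecutive zeros the sign alternates. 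Since $y^2\varphi_{\alpha_1}(y)$ and $y^2\varphi_{\alpha_2}(y)$ both tend to the \emph{same} limit $\Lambda$, we get $y^2\delta(y)\to0$; the key is to show this is incompatible with the behavior forced by the ODE — e.g. by deriving a sharp asymptotic expansion $\varphi_\alpha(y)=\Lambda y^{-2}+c_\alpha y^{-2}(\log y)^{-1}+\cdots$ or $\Lambda y^{-2}+c_\alpha y^{-\beta}$ with $c_\alpha$ a strictly monotone function of $\alpha$, forcing $y^{2}\delta(y)$ to have a nonzero (or oppositely-signed on the two ends) limit along subsequences, a contradiction. Alternatively, and more robustly, one transfers this to a zero-number / monotonicity-of-intersections argument: a Wronskian-type identity $\frac{d}{dy}\big(y^{n+1}e^{-y^2/4}\,\text{something}\cdot(\varphi_{\alpha_2}\varphi'_{\alpha_1}-\varphi_{\alpha_1}\varphi'_{\alpha_2})\big)$ has a definite sign, which prevents $\delta$ from changing sign too many times and pins down the sign of $\delta$ near infinity from its sign near $0$, again contradicting $y^2\delta\to0$.

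The main obstacle I anticipate is the injectivity part — specifically, getting enough control on the \emph{second-order} term in the asymptotic expansion of $\varphi_\alpha$ at infinity (the coefficient governing the rate of approach to $\Lambda_\alpha y^{-2}$) and proving it depends strictly monotonically on $\alpha$; the convergence $y^2\varphi_\alpha\to\Lambda_\alpha$ alone is too crude, and one genuinely needs the linearized-flow/Wronskian structure to rule out two solutions sharing a limit. The boundedness-and-convergence part is comparatively routine given the energy functionals already introduced for Lemma~\ref{lemA0Rn1}. Since the whole statement is attributed to \cite[Proposition~3.1]{GMS}, in the paper I would present a short proof sketch emphasizing these two ingredients and refer to \cite{GMS} for the computational details.
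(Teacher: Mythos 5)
The paper does not prove this lemma; it invokes \cite[Proposition~3.1]{GMS}, as you yourself note. So the only thing to assess is your supplementary sketch, and there the first half has a genuine gap.

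You propose to obtain the existence of $\Lambda_\alpha$ by transferring the energy argument of Lemma~\ref{lemA0Rn1} to $W(s)=y^{2}\varphi_\alpha(y)$, $s=\log y$, on the grounds that the resulting equation is a lower-order perturbation of \eqref{eqODEW}. It is not. Substituting $\varphi_\alpha(y)=y^{-2}W(\log y)$ into \eqref{eqlimsta} and multiplying by $y^{4}$ gives
\[
W''+\Bigl(n-4+W-\tfrac12 e^{2s}\Bigr)W'+(n-2)(W-2)W=0
\]
(the $-\varphi$ term exactly cancels the autonomous part of the contribution of $-\tfrac12 y\varphi'$, leaving the whole $-\tfrac12 e^{2s}W'$). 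The extra term $-\tfrac12 e^{2s}W'$ is not lower order: it dominates the damping coefficient as $s\to\infty$, and it has the wrong sign for the functional $E(s)=\tfrac12(W')^{2}+(n-2)\bigl(\tfrac13W^{3}-W^{2}\bigr)$, since now $E'(s)=-\bigl(n-4+W-\tfrac12 e^{2s}\bigr)(W')^{2}\ge 0$ for all large $s$. Thus $E$ is eventually increasing and boundedness of $W$ does not follow; the $n=3$ variant suffers the same sign reversal. In other words, the ``boundedness-and-convergence'' step, far from being routine, is precisely where the real work in \cite[Proposition~3.1]{GMS} lies: their argument analyzes the full similarity ODE near the singular solution $2y^{-2}$ and does not reduce to the autonomous steady-state energy of Lemma~\ref{lemA0Rn1}. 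Your sketch of the injectivity half (second-order asymptotics at infinity, or a Wronskian/intersection argument for the variational equation in $\alpha$) points in the right direction and you are appropriately tentative about it, but it too is unworked. Since the paper simply cites the result, that remains the safest route; including the energy sketch as written would mislead the reader about the difficulty.
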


We shall also need the following additional property of \eqref{eqlimsta}.

\begin{lem}\label{lemA4}
For all $\alpha>0$, if $y^*=y^*_\alpha<\infty$, then $\lim_{y\to y^*}\varphi(y)=\infty$ or $-\infty$.
\end{lem}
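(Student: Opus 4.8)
The plan is to show that a solution of \eqref{eqlimsta} cannot reach a finite value at a finite blow-up point $y^*$, so that if $y^*<\infty$ then $\varphi$ must be unbounded near $y^*$, and then to rule out oscillatory behavior so that the divergence is necessarily to $+\infty$ or $-\infty$. First I would argue by contradiction: suppose $y^*=y^*_\alpha<\infty$ but $\limsup_{y\to y^*}|\varphi(y)|<\infty$. The ODE in \eqref{eqlimsta} can be rewritten in the form
\[
\bigl(y^{n+1}e^{-y^2/4}\varphi'\bigr)' = -y^{n+1}e^{-y^2/4}\bigl(n\varphi^2+y\varphi\varphi'-\varphi\bigr),
\]
or more conveniently, observing that $y\varphi\varphi' = \tfrac12 y(\varphi^2)'$, one can absorb the first-order nonlinear term; in any case, on the compact interval $[\,y^*/2,\,y^*\,]$ the coefficients $\tfrac{n+1}{y}-\tfrac y2$ are smooth and bounded, and if $\varphi$ stays bounded there, then the right-hand side $-\bigl(\tfrac{n+1}{y}-\tfrac y2\bigr)\varphi'-n\varphi^2-y\varphi\varphi'+\varphi$ — viewed as a forcing term for the linear ODE $\varphi''=F$ — is controlled by $|\varphi'|$ plus a bounded quantity. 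A Gronwall estimate on $|\varphi|+|\varphi'|$ then forces $\varphi'$ to remain bounded as $y\to y^*$, hence $\varphi$ extends to a $C^1$ function at $y^*$, and by the standard ODE existence theorem the solution continues past $y^*$, contradicting maximality. This shows $\limsup_{y\to y^*}|\varphi(y)|=\infty$.

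Next I would upgrade $\limsup|\varphi|=\infty$ to an actual limit equal to $+\infty$ or $-\infty$. The key is to control the sign of $\varphi'$ once $|\varphi|$ is large. From the equation,
\[
\varphi'' = -\Bigl(\tfrac{n+1}{y}-\tfrac y2 + y\varphi\Bigr)\varphi' - n\varphi^2 + \varphi.
\]
Suppose, say, $\varphi(y_k)\to+\infty$ along a sequence $y_k\to y^*$. I would like to show $\varphi\to+\infty$. Consider a point $\bar y<y^*$ where $\varphi(\bar y)$ is large and positive and $\varphi'(\bar y)\ge 0$; then at $\bar y$ the term $-n\varphi^2+\varphi<0$ dominates, while the coefficient of $\varphi'$ has sign determined by $-y\varphi$ (negative for $\varphi$ large), so... this is where care is needed: the sign analysis must be organized so that $\varphi$ cannot turn around and come back down. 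The cleanest route is: once $\varphi \ge \varphi_0$ for a suitably large constant $\varphi_0$ and $y$ close to $y^*$, multiply the equation by an integrating factor and show that $\varphi$ is monotone on that final interval, or that its oscillations are impossible because each local maximum would have to be larger than the previous (by the $-n\varphi^2$ term forcing strong concavity, a bounded overshoot cannot occur). A symmetric argument handles $\varphi\to-\infty$. Combined with the dichotomy that $|\varphi|$ is eventually large (since $\limsup|\varphi|=\infty$ and $\varphi$ is continuous, either it diverges monotonically or oscillates with growing amplitude, and the equation rules out the latter), we conclude $\lim_{y\to y^*}\varphi(y)\in\{+\infty,-\infty\}$.

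The main obstacle I anticipate is precisely this second step — ruling out oscillation of unbounded amplitude near $y^*$ and pinning down a genuine limit rather than just an unbounded limsup. The concavity/convexity information from $\varphi''\approx -n\varphi^2$ (respectively $+n\varphi^2$) when $\varphi$ is large positive (respectively large negative) is the natural tool: near a would-be oscillation one gets strong concavity when $\varphi$ is large positive, which is incompatible with $\varphi$ first increasing to a peak and then decreasing back to moderate values within the shrinking interval $[\bar y, y^*]$, but making this quantitative requires comparing the curvature scale $n\varphi^2$ against the available length $y^*-\bar y$ and the gradient $\varphi'$, and tracking the first-order term $y\varphi\varphi'$ carefully (it has a definite sign only once the sign of $\varphi'$ is known). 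I expect the actual proof to use a comparison with the explicit Riccati-type blow-up of $\psi''=-n\psi^2$ (which does reach $-\infty$ in finite "time") to show the solution is trapped, together with the a priori boundedness failure from the first step; alternatively one may transform $y=y^*-e^{-\sigma}$ or use $W=y^2\varphi$ as in Lemma~\ref{lemA0Rn1} to linearize the leading behavior. In the write-up I would state the needed sign lemma — "if $\varphi(\bar y)$ is large positive and $\bar y$ is close enough to $y^*$, then $\varphi'(\bar y)>0$ and $\varphi$ increases on $[\bar y,y^*)$" — prove it by a barrier/continuity argument, and then the conclusion is immediate.
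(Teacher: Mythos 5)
Your first step is sound and coincides with the second half of the paper's argument: if $\varphi$ stays bounded on $(0,y^*)$, then from the ODE $|\varphi''|\le C(1+|\varphi'|)$ on $(y^*/2,y^*)$, so a Gronwall estimate bounds $\varphi'$, and then $(\varphi,\varphi')$ stays in a compact set near $y^*$, contradicting that $y^*$ is the maximal existence time. This gives $\limsup_{y\to y^*}|\varphi(y)|=\infty$.

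The gap is exactly where you say it is — upgrading $\limsup|\varphi|=\infty$ to an actual limit $\pm\infty$. What you offer there is a plan, not a proof: the ``sign lemma'' (\emph{if $\varphi(\bar y)$ is large positive and $\bar y$ close to $y^*$ then $\varphi'(\bar y)>0$}) is not obviously true, and in fact cannot be proved as stated. If $\varphi'(\bar y)=0$ with $\varphi(\bar y)$ large, the ODE gives $\varphi''(\bar y)=\varphi(\bar y)-n\varphi^2(\bar y)<0$, so $\bar y$ is a \emph{local maximum} and $\varphi$ decreases immediately after, directly contradicting the lemma's conclusion. Ruling out large-amplitude oscillation by tracking curvature scales versus the shrinking interval, as you sketch, would require substantial work.

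The paper dispenses with all of this by reversing the order of the two steps and by looking at the ODE at the right points: the \emph{local minima} of $\varphi$, not the points where $|\varphi|$ is large. Suppose $\varphi'$ has zeros arbitrarily close to $y^*$. Degenerate zeros ($\varphi'(y_0)=\varphi''(y_0)=0$) force $\varphi(y_0)\in\{0,1/n\}$ by the ODE, hence $\varphi\equiv 0$ or $1/n$ by uniqueness, a contradiction; so near $y^*$ the zeros are simple and alternate, producing a sequence of local minima $y_i\to y^*$ with $\varphi'(y_i)=0$ and $\varphi''(y_i)\ge 0$. Since $y^*<\infty$ is the maximal time and $\varphi'(y_i)=0$, we must have $|\varphi(y_i)|\to\infty$ (otherwise the data $(\varphi(y_i),0)$ would lie in a compact set and the solution would extend a uniform distance past $y_i$, hence past $y^*$). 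But then the ODE gives $\varphi''(y_i)=\varphi(y_i)-n\varphi^2(y_i)<0$ for large $i$, contradicting $\varphi''(y_i)\ge 0$. Hence $\varphi'$ is nonzero near $y^*$, so $\varphi$ is monotone and has a limit in $[-\infty,\infty]$, and your boundedness/Gronwall argument shows the limit cannot be finite. The lesson: evaluating $\varphi''$ via the ODE at the critical points of $\varphi$ immediately rules out oscillation, without any comparison, barrier, or curvature-versus-length estimates.
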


\begin{proof}
We first note that $\varphi'(y)\ne 0$ for $y$ close to $y^*$.
Otherwise, we have $\varphi'(y_i)=0$ and $\varphi''(y_i)\ge 0$
for some sequence $y_i\to y^*$.
Then $y^*<\infty$ implies $|\varphi(y_i)|\to\infty$ hence, by \eqref{eqlimsta}, 
$\varphi''(y_i)=(\varphi-n\varphi^2)(y_i)<0$ for $i$ large:
a contradiction.

Consequently, $\varphi$ is monotone for $y$ close to $y^*$.
If the conclusion of the lemma fails, $\varphi$ is thus bounded on $(0,y^*)$. It then follows from \eqref{eqlimsta} that
$|\varphi''|=\big|\ts\big(\frac12 y-\frac{n+1}{y}-y\varphi\big)\varphi'+\varphi-n\varphi^2\big|
\le C(1+|\varphi'|)$ on $(y^*/2,y^*)$, hence $\varphi'$ is also bounded:
a contradiction with $y^*<\infty$.
\end{proof}

We will make use of the following 
nondegeneracy property for radial solutions of linear parabolic equations.

\begin{lem}\label{lemA5}
Let $R_0\in(0,\infty]$, $s_1<s_2$, $Q=(0,R_0)\times(s_1,s_2)$, $d\in\N^*$ and $a, b\in L^\infty(Q)$. If $V\in C^{2,1}(\bar Q)$ is a solution of 
\be{eqnnondeg}
V_t-V_{yy}-\frac{d-1}{y}V_y=a(y,s) V+b(y,s)V_y\quad \hbox{in $Q$}
\ee
such that
$V(0,s)=V_y(0,s)=0$ for all $s\in (s_1,s_2)$, then $V\equiv0.$
\end{lem}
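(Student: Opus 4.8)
The plan is to exploit the boundary conditions $V(0,s)=V_y(0,s)=0$ by extending $V$ evenly to negative $y$ and absorbing the singular term $\frac{d-1}{y}V_y$ into the lower-order coefficients, then invoke backward uniqueness (or, more elementarily, a Carleman-type / energy estimate) on the resulting uniformly parabolic equation. Concretely, one first observes that near $y=0$ the radial operator $\partial_{yy}+\frac{d-1}{y}\partial_y$ is the Laplacian acting on radial functions in $\R^d$, so writing $\tilde V(x,s)=V(|x|,s)$ on the ball $B_{R_0}\subset\R^d$ gives a classical solution of a uniformly parabolic equation $\tilde V_s-\Delta\tilde V=\tilde a\tilde V+\tilde b\cdot\nabla\tilde V$ with bounded coefficients, valid up to the final time $s_2$. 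The vanishing of both $V$ and $V_y$ at $y=0$ is exactly what is needed for $\tilde V$ to be $C^{2,1}$ across the origin (rather than merely continuous), so no artificial singularity is introduced.

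The core of the argument is a zero-number / unique continuation statement: I would argue that if $V\not\equiv 0$, then for some $s_0\in(s_1,s_2)$ the slice $V(\cdot,s_0)$ is not identically zero, and by the strong maximum principle (applied to $\tilde V$ on the parabolic region between $s_0$ and any later time) $V(\cdot,s)\not\equiv 0$ for all $s\in[s_0,s_2]$. But then one uses the fact that a nonzero radial solution of the linear equation \eqref{eqnnondeg} which vanishes to second order at $y=0$ at every time would force the zero number $Z_{[0,\delta]}(V(\cdot,s))$ to be infinite for small $\delta$, or would be incompatible with the finite, nonincreasing, drop-at-degenerate-zeros behavior of the zero number on $[0,R_0']$ for $R_0'<R_0$ chosen so that $V(R_0',\cdot)\ne 0$ on a subinterval. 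The cleanest route, parallel to the zero-number lemmas invoked elsewhere in the paper (cf.~\cite[Theorem 52.28 and Remark 52.29(ii)]{quittner_souplet}), is: a $C^1$ function of $y$ with $V(0)=V_y(0)=0$ that is not identically zero near $0$ has, near $y=0$, the structure $V(y,s)\sim c(s)y^{k}$ with $k\ge 2$; an order-$k\ge 2$ zero at an interior point (here the symmetry point $y=0$) is degenerate, so the zero number strictly drops as $s$ increases through any time at which $V(\cdot,s)\not\equiv 0$ — but since this happens at \emph{every} time in a whole interval $[s_0,s_2]$, and each drop is by at least one integer, the zero number would have to decrease by infinitely much over a finite interval while staying a finite nonnegative integer: a contradiction.

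Therefore $V\equiv 0$. The main obstacle I anticipate is making the "the zero number drops at every time" argument rigorous: strictly speaking, the zero-number theory gives that $Z$ drops at times when $V(\cdot,s)$ has a \emph{multiple} zero, and one must confirm that the forced multiple zero at the symmetry point $y=0$ (guaranteed by $V(0,s)=V_y(0,s)=0$ together with $V(\cdot,s)\not\equiv 0$) indeed counts as such — i.e., that $V(\cdot,s)$ genuinely has a zero of multiplicity $\ge 2$ at $y=0$ rather than being flat there in a way the theory excludes. This is handled by the analyticity-in-$y$ regularity of solutions of linear parabolic equations away from $s=s_2$ (or by a direct ODE argument on the $y$-profile), which rules out infinite-order vanishing at an isolated $s$ unless $V(\cdot,s)\equiv 0$. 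Once this point is settled, the contradiction with the boundedness of the integer-valued zero number closes the proof; alternatively, one can bypass the zero number entirely and simply quote backward uniqueness for $\tilde V$ on $B_{R_0}\times(s_1,s_2)$ together with unique continuation from the boundary cylinder $\{0\}\times(s_1,s_2)$, but the zero-number route is more in keeping with the techniques already set up in this paper.
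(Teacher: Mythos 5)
Your core argument --- apply the Sturmian zero-number theory for radial linear parabolic equations on $[0,y_0]$, with $y_0$ chosen so that $V(y_0,\cdot)\ne 0$ on a short time interval, and observe that the degenerate zero forced at $y=0$ at every time makes the finite, nonincreasing, integer-valued zero count drop continually, a contradiction --- is exactly the paper's proof. Several of your side remarks are, however, either unnecessary or off. The obstacle you raise about whether the forced double zero at $y=0$ "genuinely has multiplicity $\ge 2$" is a non-issue: in the zero-number theorem \cite[Theorem~52.28 and Remark~52.29(ii)]{quittner_souplet}, a degenerate zero is \emph{defined} by the simultaneous vanishing of the function and of its $y$-derivative; no finite-order Taylor expansion is required, and the hypothesis $V(0,s)=V_y(0,s)=0$ is literally that condition. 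The analyticity argument you propose to address it would in any case not be available here, since $a,b$ are only assumed to lie in $L^\infty(Q)$. Your invocation of the strong maximum principle to propagate $V(\cdot,s)\not\equiv 0$ forward in time is likewise unneeded (and imprecise, since $V$ is sign-changing): by continuity alone, $V(y_0,s_0)\neq 0$ gives $V(y_0,s)\neq 0$ on a neighborhood of $s_0$, which is the nondegenerate right-endpoint condition the theory requires, and is all the paper uses. The even extension to $\R^d$ is also superfluous, because the radial version of the zero-number theorem already covers the $\tfrac{d-1}{y}V_y$ singularity on $[0,y_0]$. Finally, the suggested bypass via backward uniqueness combined with unique continuation from the line $\{0\}\times(s_1,s_2)$ is not a drop-in substitute: second-order vanishing at a single spatial point is far weaker than the open-set or infinite-order vanishing those unique continuation theorems demand.
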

\begin{proof}[Proof of Lemma~\ref{lemA5}]
Assume that there exists $(y_0,s_0)\in (0,R_0)\times (s_1,s_2)$ such that $V(y_0,s_0)\neq 0$. Then, there exists a sufficiently small $\eps>0$ such that $V(r_0,s)\neq 0$ for all $s\in [s_0-\eps, s_0+\eps]\subset (s_1,s_2)$. By
properties of the zero number (cf.~\cite[Theorem 52.28 and Remark 52.29(ii)]{quittner_souplet}), it follows that
 the function $s\mapsto Z_{[0,y_0]}(V(\cdot,s))\ge 0$ is (finite) integer-valued, nonincreasing, and drops at any degenerate zero.
Since $V$ has a degenerate zero at $y=0$ for each time, this is a contradiction.
\end{proof}

\begin{proof}[Proof of Proposition~\ref{propA2}] 
(i) Let $w$ be as in Proposition~\ref{propA2} and let $\phi$ be defined by \eqref{VraieE0def}.
Step 1 closely follows the idea in~\cite{GMS}, whereas Step~2 will simplify their arguments by making use of Lemma~\ref{lemA5}. 

\smallskip

{\bf Step 1.}  {\it Nonoscillation of $\phi(0,s)$ as $s\to\infty$.}
Recalling \eqref{wysM}, we claim that 
\be{limphiK}
\ell:=\lim_{s\to\infty} \phi(0,s) \hbox{ exists in } [0,M]. 
\ee

Assume for contradiction that
\be{limphiKnot}
\ell_1<\ell_2, \quad\hbox{where } \ell_1:=\liminf_{s\to\infty} \phi(0,s)\ge 0,\quad \ell_2:=\limsup_{s\to\infty} \phi(0,s)\le M.
\ee
We first claim that $J:=(\ell_1,\ell_2)\subset S$.
Assume $\alpha\in J\setminus S$. Since $r_\alpha<\infty$, 
two cases arise:

\noindent {\hskip 3mm}$\bullet$ $r_\alpha<y^*_\alpha$, hence $\varphi_\alpha(r_\alpha)=0$. 

\noindent {\hskip 3mm}$\bullet$ $r_\alpha=y^*_\alpha<\infty$, 
hence $\varphi_\alpha(r)\to \pm\infty$ as $r\to r_\alpha^-$ by Lemma~\ref{lemA4}.

\noindent Pick $s_1>-\log T$ such that $Re^{s_1/2}>r_\alpha$.
Using \eqref{wysM} and letting $\rho_\alpha=r_\alpha$ in the first case and $\rho_\alpha=r_\alpha-\eta$ with $\eta>0$ small
in the second case, we have $\phi(\rho_\alpha,s)-\varphi_\alpha(\rho_\alpha)\ne 0$ for all $s\ge s_1$.
By properties of the zero number, we deduce the existence of $N_1\in \N$ such that 
$$
Z_{[0,\rho_\alpha)}\left(\phi(\cdot,s)-\varphi_\alpha\right)\le N_1,\quad \hbox{ for all } s>s_1,
$$
and $s\mapsto Z_{[0,\rho_\alpha)}\left(\phi(\cdot,s)-\varphi_\alpha(\cdot)\right)$ is nonincreasing in $(s_1,\infty)$ and drops  at any degenerate zero. But $\phi(0,s)-\varphi_\alpha(0)$ changes sign infinitely many times, so that $\phi(\cdot,s)-\varphi_\alpha(\cdot)$ has infinitely many degenerate zeros in $[0,r_\alpha)$.
This is a contradiction, which proves $J\subset S$.

Now, by the assumption $B(u_0)=\{0\}$ and parabolic regularity, 
the limit $w(r,T):=\lim_{t\to T} w(r,t)$ exists for all $r\in(0,\infty)$.
For any $\alpha\in J$, set
$w_\alpha(r,t)=(T-t)^{-1}\varphi_\alpha\big((T-t)^{-1/2}r\big)$. 
Since $J\subset S$, it follows from Lemma~\ref{lemA3} that
\be{lamnda1}
w_\alpha(r,T)=\lim_{t\to T} w_\alpha(r,t)=\Lambda_\alpha r^{-2},\quad r>0.
\ee
Fix any $r_0\in(r_1,r_2)$.
Owing to the injectivity property of $\Lambda_\alpha$ in Lemma~\ref{lemA3}, we may choose $\alpha\in J$ such that 
$\Lambda_\alpha r_0^{-2}\ne w(r_0,T)$. Consequently, 
by continuity, there exists $t_0\in(0,T)$ such that 
$w(r_0,t)\neq w_\alpha(r_0,t)$ in $[t_0,T]$.
Also, the function $w-w_\alpha$ is a solution of \eqref{SMatanoMerleEqut} on 
$(0,r_0)\times(t_0,T)$ with $b=\frac{n+1}{r}+rw_\alpha$ 
and $c=u+nw_\alpha$. 
By properties of the zero number, we deduce that $Z_{[0,r_0]}\left(w(\cdot,t)- w_\alpha(\cdot,t)\right)$ is finite and nonincreasing on $(t_0,T)$ and drops at any degenerate zero. 
But, by hypothesis \eqref{limphiKnot}, the function $w(0,t)-w_\alpha(0,t)=(T-t)^{-1}\left(\phi(0,-\log(T-t))-\alpha \right)$ changes sign infinitely many times on $(t_0,T)$. This is a contradiction and \eqref{limphiK} is proved.

\smallskip

{\bf Step 2.} {\it $\omega$-limit and nondegeneracy argument.}
Let us define the $\omega$-limit set: 
$$
\omega (\phi)=\big\{ \Phi: \phi(\cdot,s_j)\to \Phi  \ \hbox{in $C^1_{loc}([0,\infty))$ for some $s_j\to \infty$  as } j\to \infty\big\}. 
$$
By \eqref{wysM} and parabolic regularity, $\omega (\phi)$ is nonempty.
Pick any $\Phi\in \omega (\phi)$,
take any sequence $s_j\to \infty$ such that $\phi(\cdot,s_j)\to \Phi$ in $L^\infty_{loc}([0,\infty))$,
and set $\phi_j:=\phi(y,s+s_j)$.
It follows from \eqref{VraieE0}, \eqref{wysM} and 
 parabolic estimates that
the sequence $(\phi_j)_j$ is  
 bounded in $W^{2,1;q}_{loc}(Q)$ for any $1<q<\infty$, where $Q=[0,\infty)\times\R$. 
Consequently, there exists a subsequence (still denoted $s_j$) and  a function $Z$
such that $\phi(\cdot,\cdot+s_j)\to Z$ weakly in $W^{2,1;q}_{loc}(Q)$ and strongly in $C^{1+\alpha,\alpha/2}_{loc}(Q)$ for any $\alpha\in(0,1)$.
 Therefore, $Z$ is a  strong solution of
\be{limeqn}
Z_s-Z_{yy}-\frac{n+1}{y}Z_y=n Z^2-\frac{1}{2}y Z_y-Z+y ZZ_y\quad \text{in }Q,
\ee
with $Z_y(0,s)=0$ for all $s\in\R$. Moreover, by \eqref{limphiK}, we have $Z(0,s)=\ell$ for all $s\in\R$. 

Set $V=Z_s$. By parabolic regularity we have $V\in C^{2,1}(Q)$ 
and we see that $V$ satisfies \eqref{eqnnondeg} with $d=n+2$, $a=2nZ-1+yZ_y$ and $b=y(Z-\frac12)$,  
along with $V(0,s)=V_y(0,s)=0$ for all $s\in\R$. 
By Lemma~\ref{lemA5}, applied for each $R_0>0$, 
we deduce that $V\equiv 0$.
Consequently $Z$ is a steady-state of  \eqref{limeqn}, hence $Z=\varphi_\ell$.
We have shown that $\omega (\phi)=\{\varphi_\ell\}$,
which proves \eqref{cvpropA2w} with $\Psi=\varphi_\ell$, 
hence \eqref{cvpropA2} in view of \eqref{reluw}.

\smallskip

(ii) We now have $w_r\le 0$ by Proposition~\ref{locprop}(ii),
hence in particular $\Psi'\le 0$.
Moreover, by \eqref{pfmonotB0b}, we have
\be{Psinot0}
\Psi(0)\ge 1/n.
\ee
Next, by \cite[Theorem~1.3 and Remark~(vi)~p.670]{Soup-Win}, we know that
$$w(x,t)\le C_1(T-t+|x|^2)^{-1}\quad\hbox{ in $B_R\times(T/2,T)$}$$
 for some $C_1>0$, hence
$(T-t)w(y\sqrt{T-t},t)\le C_1(1+|y|^2)^{-1}$ for each $y>0$ and $t\in(T-R^2|y|^{-2},T)$. 
Applying this with $y=\sqrt{nC_1}$ and letting $t\to T$ in  \eqref{cvpropA2w}
guarantees that \be{Psinot1n}
\Psi\not\equiv 1/n.
\ee
Finally, if $\Psi'(y)=0$ for some $y>0$, then
$\Psi''(y)=0$, owing to $\Psi'\le 0$, hence $\Psi(y)=0$ or $1/n$ by \eqref{eqnPsi}, so that $\Psi\equiv 0$ or $1/n$
by local uniqueness for \eqref{eqnPsi}: a contradiction with \eqref{Psinot0} or \eqref{Psinot1n}.
This concludes the proof. 
\end{proof}

{\bf Declarations:}  The authors report there are no competing interests to declare.
This manuscript has no associated data.

\section{Appendix. Proof of Proposition~\ref{locprop}}

(i) $\bullet$ {\it Local existence-uniqueness and regularity for $\Omega=B_R$.}
By \cite[Theorem~1 and Example~1]{BN93}
(actually also valid in nonradial setting for any $0\le u_0\in L^q(B_R)$ with $q\in(n/2,\infty)$; see also \cite[Theorem~2(ii)]{BN94}),
there exists a unique maximal weak solution, with $u\in C([0,T);L^q(\Omega))$ for all $q\in(n/2,\infty)$.
The constructed solution satisfies $u\ge 0$ and $u\in L^\infty_{loc}(0,T;L^\infty(\Omega))$.
The latter property and a bootstrap argument using parabolic regularity
guarantee the classical regularity of the solution for $t>0$.
Moreover, in view of the radial symmetry of $u_0$, the radial symmetry of $u$ follows from uniqueness.
We claim that 
\be{bddinitial}
u, w\in L^\infty_{loc}([0,T);L^\infty(\Omega)).
\ee
Since $u\in C([0,T);L^q(\Omega))$ for all finite $q$, the second equation and elliptic regularity imply $\nabla v\in L^\infty_{loc}([0,T);L^\infty(\Omega))$,
so that $u$ solves $u_t-\Delta u=a(x,t)u+b(x,t)\cdot\nabla u$ with $a\in L^\infty_{loc}([0,T);L^q(\Omega))$
and $b\in L^\infty_{loc}([0,T);L^\infty(\Omega))$.
The linear problem $\bar u_t-\Delta \bar u=a(x,t)\bar u+b(x,t)\cdot\nabla \bar u$ with $\bar u_\nu=0$ and $\bar u(0)=u_0$
admits a unique strong solution such that $\bar u-S(t)u_0\in C([0,T);L^\infty(\Omega))$, where $S(t)$ is the Neumann semigroup.
Also,  since $v=0$ on $\partial B_R$ and $v\ge 0$ by the maximum principle, we have 
$v_\nu\le 0$ hence $u_\nu=uv_\nu\le 0$ on $\partial B_R$.
Since $a\in L^\infty_{loc}([0,T);L^q(\Omega))$ ($q>n/2$) and $u,\bar u\in C([0,T);L^2(\Omega))$
we may compare $u$ and $\bar u$ by the (Stampacchia) maximum principle, to get $u\le \bar u$ in $\overline\Omega\times(0,T)$, 
hence \eqref{bddinitial} for $u$.
Property \eqref{bddinitial} for $w$ is then a direct consequence of that for $u$ and of definition \eqref{w}.

\smallskip

$\bullet$ {\it Local existence-uniqueness and regularity for $\Omega=\R^n$.}
Since $u_0$ is radially symmetric and we look for radial solutions, the second equation is equivalent to
$-v_r(r,t) = \int_0^r (s/r)^{n-1} u(s,t)ds =: \mathcal{K}(u)$ 
where $r=|x|$, so that the problem becomes equivalent 
 to solving $u_t-\Delta u = u^2+\mathcal{K}(u)u_r$ with $u(\cdot,0)=u_0\ge 0$.
For  $u_0\in BC(\R^n)$, this is done in \cite{Win23},
whose existence part relies on an approximation by compactly supported initial data.
This approach can be extended to $u_0\in L^\infty(\R^n)$ by using the existence results in \cite{biler1995b}
which apply in particular for $u_0\in L^1\cap L^\infty(\R^n)$, hence to compactly supported $u_0\in L^\infty(\R^n)$.
Alternatively, this can be done directly by a 
fixed point on a ball of the space 
$X=L^\infty(0,T_0;L^\infty(\R^n)) \cap L^\infty_{loc}((0,T_0];BC^1(\R^n))$ 
with $T_0>0$ small, 
normed by $\sup_{t\in(0,T_0)} \big[\|u(t)\|_\infty + t^{1/2} \|u_r(t)\|_\infty\big]$
(see,~e.g.,~\cite[Theorem 15.2, Example 51.30 and Proposition~51.40]{quittner_souplet} for similar arguments).

\smallskip

(ii) To prove the radial monotonicity, we start with smooth initial data, and then treat the general case by an approximation argument.

\smallskip

{\bf Case 1:} Let $u_0\in C^\infty_0(\Omega)$ with $u_0$ radially symmetric and $u_{0,r}\le 0$.
In Case~1, $\tau$ will denote any time in $(0,T)$, we will set $Q_\tau=\Omega\times(0,\tau)$ and 
$C$ will stand for generic positive constants possibly depending on $u_0$ and $\tau$.
\smallskip

$\bullet$ We claim that 
$u_r, w_r, w_{rr}\in C(\overline\Omega\times[0,T))$.
Since 
$w_t-\tilde\Delta w=f:=u w$ in $\tilde Q_T:=\tilde\Omega\times (0,T)$ by \eqref{reluw}, \eqref{highereq},
we see from \eqref{bddinitial} that $f\in L^\infty(\tilde Q_\tau)$. 
In case $\Omega=B_R$,
$w_0$ is $C^\infty$ and satisfies the first and second order compatibility 
conditions $w=\mu=R^{-n}\|u_0\|_1$ 
and $\tilde\Delta w+f=0$ on $\partial \tilde\Omega\times\{0\}$.
It follows from interior or interior-boundary parabolic $L^q$ and Schauder regularity that
$w\in W^{2,1;q}_{loc}(\overline Q_\tau)\subset C^{1+\alpha,\alpha/2}_{loc}(\overline Q_\tau)$ for some $\alpha\in(0,1)$ and some large $q\in(1,\infty)$,
hence $u=rw_r+nw\in C^{\alpha,\alpha/2}_{loc}(\overline Q_\tau)$,
and then that $w\in C^{2+\alpha,1+\alpha/2}_{loc}(\overline Q_\tau)$.
Using $u_r=rw_{rr}+(n+1)w_r$, owing to \eqref{reluw}, this implies the claim. 

\smallskip

$\bullet$ We claim that $w_r\le 0$.
The function $z:=w_r\in C(\overline\Omega\times[0,T))$ satisfies 
\be{eqnzMP}
z_t-z_{rr}-\ts\frac{d}{r}z_r+\frac{d}{r^2}z=az+bz_r,
\ee
where $d=n+1$, $a=(2n+1)w+r w_r$, $b= rw$ and $z(0,t)=0$, $z(\cdot,0)\le 0$.
By \eqref{reluw}, \eqref{bddinitial},  we have $|rw_r|=|nw-u|\le C$ in $Q_\tau$, hence 
$a\in L^\infty(Q_\tau)$ and, in case $\Omega=\R^n$,
$z\to 0$ as $r\to \infty$ uniformly for $t\in(0,\tau)$. 
The claim then follows from the maximum principle
(see, e.g.,~\cite[Proposition~52.4 and Remark~52.11(a)]{quittner_souplet}).

\smallskip

$\bullet$ Assume $\Omega=\R^n$. We claim that 
\be{decayur}
\hbox{$u_r(r,t)\to 0$ as $r\to \infty$, uniformly for $t\in(0,\tau]$.}
\ee
To this end we first note that $w_0(x)\le C(1+|x|^2)^{-n/2}$ by \eqref{w}.
On the other hand, denoting by $\tilde S(t)$ the heat semigroup in $\R^{n+2}$,
we have $\tilde S(t)(1+|x|^2)^{-n/2}\le (1+|x|^2)^{-n/2}$ for all $t>0$ owing to $-\tilde\Delta (1+|x|^2)^{-n/2}\ge 0$. 
It then follows from \eqref{highereq} and \eqref{bddinitial} that
$w(t)\le Ce^{Ct}\tilde S(t)(1+|x|^2)^{-n/2}\le C(1+|x|)^{-n}$ in $Q_\tau$, 
and $|rw_r|\le C$ guarantees $|w_r|\le C(1+|x|)^{-n-1}$ in $\bar Q_\tau$.
On the other hand, $rw_{0,rr}=u_{0,r}-(n+1)w_{0,r}$ implies $|w_{0,rr}|\le C(1+|x|)^{-n-2}$.
For any $h$ with $|h|=1$ and $R>1$, since $f=uw\le CR^{-2n}$ in $\tilde Q_R:=B_R(2Rh)\times(0,\tau)$,
using parabolic regularity on \eqref{highereq} similarly as above, we get $|w_{rr}(r,t)|\le C(1+r)^{-n-2}$ in $Q_\tau$.
Since $u_r=rw_{rr}+(n+1)w_r$, property \eqref{decayur} follows.
\smallskip

$\bullet$  We claim that $u_r\le 0$.
Differentiating in $r$ the first PDE in  \eqref{0}, rewritten as $u_t-u_{rr}-\frac{n-1}{r}u_r=u^2-u_rv_r$,
it follows that $z:=u_r\in C(\overline\Omega\times[0,T))$ solves \eqref{eqnzMP} with $d=n-1$, 
$a=2u-v_{rr}$, $b=-v_r$. 
Since $-r^{n-1}v_r=\int_0^r s^{n-1} u(s,t)\, ds$, we deduce from \eqref{bddinitial} that $a\in L^\infty(Q_\tau)$.
Also, as noted in part (i) 
above, we have $u_r(R,t)\le 0$ in case $\Omega=B_R$. 
Since $u_r(0,t)=0$ and $(u_0)_r\le 0$, 
using also \eqref{decayur} if $\Omega=\R^n$,
we may apply the maximum principle (cf.~supra) to deduce the claim.

\smallskip

{\bf Case 2:} Let $u_0$ satisfy \eqref{i0}.
\smallskip

$\bullet$  We may find a sequence of radially symmetric functions $u_{0,j}\in C^\infty_0(\Omega)$ with $\partial_r u_{0,j}\le 0$, such
that $\|w_{0,j}\|_\infty\le\|u_{0,j}\|_\infty\le \|u_0\|_\infty+1$ and $u_{0,j}\to u_0$,
$w_{0,j}\to w_0$ a.e.~and in $L^q(B_R)$ or in $L^q_{loc}(\R^n)$.
Denote by $(u_j,v_j)$ (resp.,~$w_j$) the corresponding solution of \eqref{0} (resp.,~\eqref{e1}),
with $\mu$ replaced by $\mu_j:=R^{-n}\|u_{0,j}\|_1$ if $\Omega=B_R$. 
Set $\hat Q=\bar B_R\times(0,\tau]$ or $\R^n\times(0,\tau]$.

\smallskip
$\bullet$  We claim that
$w_j$ converges in $W^{2,1;q}_{loc}(\hat Q)$
to a strong, hence classical solution $W$ of \eqref{e1} in $\hat Q$ and that,
consequently, $u_j=rw_{j,r}+nw_j\to U:=rW_r+nW$ in $C_{loc}(\hat Q)$,
so that $U_r\le 0$.
To this end, we first note that $u_{j,r}, w_{j,r}\le 0$ by Case 1.
Since also $v_{j,r}\le 0$, the function
$m_j(t):=u_j(0,t)=\|u_j(t)\|_\infty$ satisfies $m_j'\le m_j^2$ by the first PDE in  \eqref{0},
and $m_j(0)\le \|u_0\|_\infty+1$.
Therefore, there exists $t_0\in (0,T)$ such that $|u_j|\le C$ in $Q_{t_0}$, hence $|w_j|\le C$,
as well as $|rw_{j,r}|=|u_j-nw_j|\le C$ in $Q_{t_0}$
(here and below $C>0$ denotes generic constants independent of $j$). 
The claimed convergence of $w_j$ then follows from parabolic estimates applied to \eqref{highereq}.

\smallskip

$\bullet$  We now identify the initial data of $W$. Namely, we claim that $W(t)\to w_0$ in 
$L^2(B_R)$ or $L^2_{loc}(\R^n)$ as $t\to 0$.
To this end we set $\tilde \mu_j=\mu_j$, $\tilde \mu=\mu$ if $\Omega=B_R$  
and $\tilde \mu_j=0$, $\tilde \mu=0$ if $\Omega=\R^n$,  
and observe that $\tilde w_j=w_j-\tilde \mu_j$ is a solution of 
$\tilde w_j=\tilde S(t)\tilde w_{j,0}+\int_0^t \tilde S(t-s)(nw_j+rw_{j,r})w_j(s)\, ds$ 
for $t\in (0,t_0)$,
where $\tilde S(t)$ is the heat semigroup in $\R^{n+2}$ if $\Omega=\R^n$, or in $\tilde B_R$ with Dirichlet conditions if $\Omega=B_R$.
Since $\|\tilde S(t)\phi\|_\infty\le \|\phi\|_\infty$ for any $\phi\in L^\infty$,
it follows from the above uniform estimates of $w_j$ and $rw_{j,r}$ that
$|\tilde w_j(t)-\tilde S(t)\tilde w_{j,0} |\le Ct$ in $Q_{t_0}$.
For each $t\in(0,t_0)$, we have $\tilde S(t)\tilde w_{j,0}\to \tilde S(t)(w_0-\tilde \mu)$ pointwise in $\tilde\Omega$ as $j\to\infty$ (by dominated convergence).
Since also $\tilde \mu_j\to \tilde \mu$, we obtain
$|W(t)-\tilde \mu-\tilde S(t)(w_0-\tilde \mu)|\le Ct$, hence
$|W(t)-S(t)w_0+\tilde \mu(S(t)1-1)|\le Ct$ in $Q_{t_0}$.
 Since  $\lim_{t\to 0}\|\tilde S(t)1-1\|_2=0$ if $\Omega=B_R$, the claim follows.

\smallskip

$\bullet$ We claim that $w=W$ in $Q_{t_0}$.
To this end, we set $\phi=w-W$ which, by direct calculation using~\eqref{reluw}, 
solves
$\phi_t - \tilde\Delta \phi=a\phi+bx\cdot\tilde\nabla \phi$, 
where $a=u+nW$ and  $b=W$ 
satisfy $a,b\in L^\infty(Q_{t_0})$.
If $\Omega=B_R$, since $\phi=0$ on $\partial\Omega$ and $\phi(t)\to 0$ in $L^2(B_R)$ as $t\to 0$, it follows from the maximum principle that $\phi\equiv 0$.
Next assume $\Omega=\R^n$. We set $\psi(x)=\log(1+|x|^2)$, which satisfies $|\tilde\nabla\psi|=2|x|/(1+|x|^2)$ 
and $|\tilde\Delta\psi|\le c(n)$,
and we penalize $\phi$ by considering $\phi_\eps:=\phi e^{-At}-\eps\theta$ with $A=\|a\|_{L^\infty(Q_{t_0})}$, $\theta=\psi+Kt$ 
 and $K=c(n)+2\|b\|_{L^\infty(Q_{t_0})}$.
Then we have
$B:=-\theta_t+ \tilde\Delta\theta+bx\cdot\tilde\nabla\theta\le-K+c(n)+2\|b\|_{L^\infty(Q_{t_0})}=0$, hence 
$$\partial_t \phi_\eps- \tilde\Delta \phi_\eps=(a-A)\phi  e^{-At} 
+bx\cdot\nabla \phi_\eps+\eps B\le (a-A)\phi_\eps+bx\cdot\nabla \phi_\eps
\quad\hbox{in $\R^{n+2}\times(0,t_0)$.}$$
For each $\eps>0$, since $\phi\in L^\infty(Q_{t_0})$, there exists $R_\eps>0$ such that 
$\phi_\eps<0$ in $(\R^{n+2}\setminus\partial B_{R_\eps})\times(0,t_0)$. 
Since $\phi_\eps(t)\le \phi(t) e^{-At}\to 0$ in $L^2(B_{R_\eps})$ 
as $t\to 0$, it follows from the maximum principle that $\phi_\eps\le 0$ in 
$B_{R_\eps}\times(0,t_0)$ hence in $\R^{n+2}\times(0,t_0)$. 
Exchanging the roles of $w$ and $W$, the claim follows.

\smallskip

$\bullet$ By the above, we have $u=rw_r+nw=rW_r+nW=U$, hence $u_r\le 0$ in $Q_{t_0}$.
Letting $T_1=\sup\{\tau\in (0,T); u_r\le 0 \hbox{ in $Q_\tau$}\}$, we necessarily have $T_1=T$ since, otherwise,
$u_r(\cdot,T_1)\le 0$ and the above argument applied with $u(\cdot,T_1)$ as new initial data would lead to a contradiction.
This concludes the proof of assertion (ii). 

\smallskip

(iii) Property \eqref{mass} follows by integrating in space the first equation if $\Omega=B_R$,
or the corresponding variation of constants formula if $\Omega=\R^n$ (then also using the conservation property of the Gaussian semigroup).
\qed

}

\end{document}